\colorlet{darkblue}{blue!50!black}
\colorlet{darkblue}{blue!50!black}
\newcommand{\p}{\partial}
\newcommand{\e}{\varepsilon}
\newcommand{\R}{{\mathbb R}}
\newcommand{\Z}{{\mathbb Z}}
\newcommand{\T}{{\mathbb T}}
\newcommand{\N}{{\mathbb N}}
\newcommand{\ty}{\infty}
\newcommand{\de}{\delta}
\DeclareMathOperator{\sign}{sign}
\newcommand{\TT}{{\cal T}}
\newcommand{\lag}{\langle}
\newcommand{\rag}{\rangle}
\newcommand{\dd}{{\textup d}}
\newcommand{\B}{{\mathbb B}}
\newcommand{\dist}{\mathop{\rm dist}\nolimits}
\theoremstyle{plain}
\newtheorem*{mta}{Main Theorem~\hypertarget{A}{A}}
\newtheorem*{mtb}{Main Theorem~\hypertarget{B}{B}}
\newcommand{\norm}[1]{\left\lVert #1 \right\rVert}
\newcommand{\tT}{\tilde}
\newtheorem*{lemma*}{Lemma}
\newtheorem{theorem}{Theorem}[section]
\newtheorem{proposition}[theorem]{Proposition}
\newtheorem{corollary}[theorem]{Corollary}
\theoremstyle{definition}
\newtheorem{definition}[theorem]{Definition}
\theoremstyle{remark}
\newtheorem{remark}[theorem]{Remark}
\newtheorem{example}[theorem]{Example}
\newtheorem*{Assumption I}{Assumption I}
\newtheorem*{Assumption II}{Assumption II}
\numberwithin{equation}{section}
\title{
On the small-time bilinear control of a nonlinear heat equation: global approximate controllability and exact controllability to trajectories

}
 \author{Alessandro Duca,\footnote{Universit\'e de Lorraine, CNRS, INRIA, IECL, F-54000 Nancy, France;  e-mail: alessandro.duca@inria.fr}\and{Eugenio Pozzoli \footnote{Univ Rennes, CNRS, IRMAR - UMR 6625, F-35000 Rennes, France; e-mail: eugenio.pozzoli@univ-rennes.fr}}\and{Cristina Urbani\footnote{Dipartimento di Ingegneria e Scienze, Universitas Mercatorum, Piazza Mattei 10, 00186, Roma, Italy; e-mail: cristina.urbani@unimercatorum.it}}}
\begin{document} 

 \maketitle

\begin{abstract}
    In this work we analyse the small-time reachability properties of a nonlinear parabolic equation, by means of a bilinear control, posed on a torus of arbitrary dimension $d$. Under a saturation hypothesis on the control operators, we show the small-time approximate controllability between states sharing the same sign. Moreover, in the one-dimensional case $d=1$, we combine this property with a local exact controllability result, and prove the small-time exact controllability of any positive states towards the ground state of the evolution operator.

\vspace{2mm}
    \noindent {\bf Keywords:}~bilinear control, heat equation, approximate controllability, exact controllability, moment problem, biorthogonal family

\vspace{2mm}
\noindent {\bf 2010 MSC:}~93B05, 35Q93, 35K05.

\end{abstract}

\section{Introduction}

In this paper we study controllability properties of the following Nonlinear Heat Equation on the $d$-dimensional torus $\T^d=\R^d/2\pi \Z^d$

\begin{equation}\label{0.1}\tag{NHE}\begin{split}
    \begin{cases}
 \p_t \psi(t,x)=\Big(\Delta -\kappa  \psi(t,x)^p +\lag u(t),Q(x) \rag\Big)\psi(t,x), \ \ \ \ \ \ \ \ x\in\T^d,\ t>0,\\
   \psi(t=0,\cdot)=\psi_0(\cdot),
    \end{cases}
\end{split}
 \end{equation}
where $d\in\N^*$, $p\in\N$,  $\kappa\in\R$, the operator $\Delta=\sum_{i=1}^d\frac{\partial^2}{\partial x_i^2}$ is the Laplacian and $Q=(Q_1,...,Q_{q},\mu_1,\mu_2):\T^d\to \R^{q+2}, q\in \N, q\geq 2d+1,$
is a fixed bounded measurable function. The set of potentials $(Q_1,\dots,Q_q)$ is used to establish global approximate controllability properties, as detailed in Section \ref{sec:approximate},  while the set of potentials $(\mu_1,\mu_2)$ ensures local exact controllability to trajectories, as described in Section \ref{sec:exact}.
The $\R^{q+2}$-valued function $u\in L^2_{loc}(\R^+,\R^{q+2})$ plays the role of a control. It means that $u$ is the function that can be chosen to steer the solution of the problem towards a desired state. Observe that our control depends only on time.

\smallskip

In numerous practical problems from chemistry, neurobiology, and life science, the evolution of a specific system is subject to a control which is not external, but rather a modification of the principal parameter of the evolution. In these cases, it can be appropriate to consider evolution equations in the presence of multiplicative controls. Such controls are called bilinear and have the form $\lag u(t),Q(x) \rag$, as in \eqref{0.1}, if only the time-dependent intensity $u$ can be tuned, while the space-dependent potential $Q$ is fixed. 

\smallskip

An example of a parabolic model with a dynamic governed by a multiplicative control is the distributed parameter control model studied by Lenhart and Bath in \cite{pop}. In such work, the authors consider wildlife damage management to control the population of diffusive small
mammal species such as beavers, raccoons, and muskrats. These small mammal
species often damage human interests, and it is important to study the possibility of controlling their dispersal behaviour. The migratory habit of such animals obviously presents an additional complication. For instance, their removal from any habitat can cause the attraction of other individuals from nearby lands and a consequent increase of the trapping cost. In \cite{pop}, the authors study the dynamics of the population density of one of these species, described by a control model incorporating dispersive dynamics and a multiplicative control that represents trapping. In such work, the evolution is modelled by an equation of the form
\begin{align}\label{introd}
    \p_t \varphi(t,x)=(\alpha \Delta + a - b\varphi(t,x)  + p(t,x)  )\varphi(t,x),\qquad t>0,
\end{align}
where $\varphi$ is the population density, $\alpha$ is constant, $a$ and $b$ are growth parameters and $p$ is the rate of trapping which is used as a control. When one can control the time-dependent intensity of the trapping but not its spatial distribution, it is possible to separate the variables of the function $p$ and write 
$$p(t,x)=u(t)\mu(x).$$
This choice leads to a new formulation for the evolution model where the actual control is the function $u$ and the equation for $a=0$, takes the form of \eqref{0.1}.

\subsection{Global approximate controllability on the $d$-dimensional torus}\label{sec:approximate}

Let us consider the following vector space:
\begin{equation*}\label{H0_as}
    \mathcal{H}_0:={\rm span}_{\R}  \{Q_1,\dots,Q_q\}.
\end{equation*} 
We also introduce a collection of $d$ elements of $\Z^d$ 
$$
\mathcal{K}=\Big\{\big(1,0,\dots,0\big),\big(0,1,\dots,0\big),\dots,\big(0,\dots,1,0\big),\big(1,\dots,1\big)\Big\}\subset \Z^d.
$$

 \begin{Assumption I} The potentials satisfy $Q_1,\dots,Q_q\in C^\infty(\T^d,\R)$, and
 \begin{equation}\label{ip-trigonotriche}\begin{split}
&\big\{1,\cos \lag k, x\rag ,\ \sin \lag k, x\rag\big\}_{k\in\mathcal{K}}\subset \mathcal{H}_0.\end{split}\end{equation}
\end{Assumption I}

In what follows, we denote by  
$\psi(t;\psi_0, u)$
the solution of \eqref{0.1} at time $t$, associated with the initial condition $\psi_0$ and the control $u$. Of course, it is intended that we look at such solutions only when the existence is ensured (see Proposition \ref{well-pos} below).
We now present the first main result of the paper: a small-time global approximate controllability property of \eqref{0.1} between states that share the same sign. 
\begin{mta}\label{mta}
Let $s\in\N^*$ be such that $s>d/2$, $\kappa\in \R,$ and $p\in \N$. Suppose that Assumption I is satisfied. Then \eqref{0.1} verifies the following small-time approximate controllability properties.
\begin{itemize}
\item[(i)] Let $\psi_0,\psi_1\in H^s(\T^d,\R)$ be such that ${\rm sign}(\psi_0)={\rm sign}(\psi_1)$. For any $\epsilon>0$ and $T>0$, there exist $\tau\in [0,T]$ and $(u_1,...,u_q)\in L^2((0,\tau),\R^q)$, such that 
$$\big\|\psi(\tau;\psi_0,u)-\psi_1\big\|_{L^2}<\epsilon,$$
with $u=(u_1,...,u_q,0,0)$.
\item[(ii)] Let $\psi_0,\psi_1\in H^s(\T^d,\R)$ be such that $\psi_0,\psi_1>0$ (or $\psi_0,\psi_1<0$).
For any $\epsilon>0$ and $T>0$, there exists $(u_1,...,u_q)\in L^2((0,T),\R^q)$, such that
$$\big\|\psi(T;\psi_0,u)-\psi_1\big\|_{H^s}<\epsilon,$$
with $u=(u_1,...,u_q,0,0)$.
\end{itemize}
\end{mta}
As stated in part $(i)$ of Main Theorem A, with our technique we are able to prove small-time  controllability between states sharing the same sign, approximately in $L^2$, but not in $H^s$ for $s>0$. As pointed out in part $(ii)$, for positive states, we can improve the small-time approximate controllability to be also valid in $H^s$. In particular, the $H^s$-approximate control result for $s>d/2$, stated in part $(ii)$, will be crucial to derive small-time global exact controllability of \eqref{0.1}, driving the solution from any positive $H^3$ state towards the constant state. Such a result is contained in Main Theorem B below.
Main Theorem A is a specific case of Theorem \ref{mta-gene} below, where approximate controllability is ensured with general control potentials $Q_1,\dots,Q_q$ satisfying specific saturation assumptions. 


\smallskip

To the best of our knowledge, the most recent works on approximate controllability of nonlinear parabolic equations via multiplicative controls are \cite{FloCanKah, FloNitTro}. In such works, the authors studied $1$-dimensional problems with globally Lipschitz continuous nonlinearities. They proved approximate controllability between states with the same number of sign changes when the time is sufficiently large. They considered controls whose space- and time-dependence can be chosen 
accordingly to the initial and final states. The novelties of Main Theorem A are thus the following:

\begin{itemize}
\item The approximate controllability is achieved at arbitrarily small times.


\item The approximate controllability holds on $\T^d$ for any $d\in \N^*$, that is, in arbitrary spatial dimensions.


\item The controls depend only on time, that is, the space-dependent potential $Q(x)$ is fixed.

\item Equation \eqref{0.1} exhibits a polynomial nonlinearity.
\end{itemize}

\subsection{Global exact controllability to the ground state solution for the $1$-dimensional problem}\label{sec:exact}

The second main result of our work is the exact controllability of \eqref{0.1} to the ground state solution in the $1$-dimensional case $d=1$. Let us consider the ordered eigenvalues $\{\lambda_k\}_{k\in\N}$ of the Laplacian $-\Delta$ (not counted with their multiplicity)
\begin{equation}\label{eigenv}
    \lambda_k=k^2,\qquad \forall\, k\in\N.
\end{equation}
Note that, except for the first one $\lambda_0=0$, all the eigenvalues are double. We denote by $\{c_0,c_k,s_k\}_{k\in\N}$ the corresponding orthonormal eigenfunctions of $-\Delta$
\begin{equation}\label{eigenf}
    c_0 = \frac{1}{\sqrt{2\pi}},\qquad c_k(x)= \frac{1}{\sqrt{\pi}}\cos( k x),\qquad s_k(x)= \frac{1}{\sqrt{\pi}}\sin( k x),\qquad \forall\, k\in\N^*,
\end{equation}
which form a Hilbert basis of $L^2(\T,\R)$. Note that $c_0$ represents the free evolution ($u=0$) of the linear ($\kappa=0$) heat equation \eqref{0.1} with initial condition $\psi_0=c_0$. Such a solution is usually called the \emph{ground state solution}. Henceforth, we shall also use the notation $\Phi:=c_0$. To study the exact controllability of \eqref{0.1}, we introduce the following additional assumption.
 
 \begin{Assumption II} The potentials satisfy $Q_1=1$, $\mu_1,\mu_2\in H^3(\T,\R)$ and
 \begin{equation}\begin{split}\label{hp_intro}
\lag \mu_1,c_0 \rag_{L^2}\neq 0,\ \ \ \ \ & \    \ \ \lag \mu_2,c_0 \rag_{L^2}=0,\\
     \exists\,b_1,q_1>0\,:\: \lambda_k^{q_1}\left|\lag \mu_1 ,c_k\rag_{L^2}\right|\geq b_1,\,\, &\text{ and }\,\, \lag \mu_1,s_k \rag_{L^2} = 0,\quad\forall\,k\in\N^*,\\
        \exists\,b_2,q_2>0\,:\: \lambda_k^{q_2}\left|\lag \mu_2 ,s_k\rag_{L^2}\right|\geq b_2,\,\, &\text{ and }\,\, \lag \mu_2,c_k \rag_{L^2}= 0,\quad\forall\,k\in\N^*.
\end{split}\end{equation}
\end{Assumption II}

We now state our second main result which ensures global small-time exact controllability of the 1-dimensional \eqref{0.1} to the ground state solution, starting from any positive state.

\begin{mtb}\label{mtb}
 Let $d=1$, $\kappa \geq 0$ and $p\in 2\N$. Suppose that Assumptions I and II are satisfied. Then, \eqref{0.1} is exactly controllable in $H^3(\T,\R)$ to the ground state solution $\Phi $ in any positive time from any positive state. More precisely, for any $T>0$ and $ \psi_0 \in H^3(\T,\R)$ such that $\psi_0>0$, there exists $u\in L^2((0,T),\R^{q+2})$, such that $$\psi(T;  \psi_0 ,u)= \Phi.$$
Analogously, for any $T>0$ and $  \psi_0 \in H^3(\T,\R)$ such that $\psi_0<0$, there exists $u\in L^2((0,T),\R^{q+2})$, such that $$\psi(T;  \psi_0 ,u)= -\Phi.$$
\end{mtb}

Main Theorem B yields small-time exact controllability to the ground state solution in $H^3$ when $\kappa \geq 0$ and $p\in 2\N$. This property is obtained by using the global $H^s$-approximate controllability result between positive states (Main Theorem A (ii)) together with a $H^3$-local exact controllability result to the ground state solution $\Phi $ in any positive time (see Theorem \ref{teo-loc-nlh} below). The specific choice of the parameters $\kappa$ and $p$ ensures that equation \eqref{0.1} is globally well-posed in $H^3$, which is crucial in our proof of the local exact controllability.

\smallskip

Note that the result of Main Theorem B can be achieved using five potentials. Indeed, one can consider 
$$Q=(1, \cos(x), \sin(x),\mu_1,\mu_2).$$
The potentials $Q_1=1,Q_2=\sin(x),Q_3=\cos(x)$, satisfy Assumption I. Examples of functions $\mu_1$ and $\mu_2$ satisfying Assumption II are (see Example \ref{exemple} below for further details)
$$\mu_1(x)=x^3(2\pi-x)^3, \ \ \ \ \ \ \ \  \ \ \ \ \ \ \mu_2(x)=x^3(x-\pi)^3(x-2\pi)^3.$$

\smallskip

An interesting aspect of Main Theorem B is the validity of an exact controllability result on the torus where the Laplacian exhibits double eigenvalues. Indeed, the first step to prove local exact controllability of \eqref{0.1} is based on the solvability of a suitable moment problem. The method for showing local controllability of bilinear parabolic PDEs, introduced in \cite{acue,acu}, is not directly applicable in our framework. In fact, we face the following two additional difficulties:
\begin{itemize}
    \item the presence of double eigenvalues is, a priori, an obstacle to the solvability of the moment problem at hand. We address the problem by \lq\lq filtering" the spectrum of the Laplacian via the two potentials $\mu_1$ and $\mu_2$. 
We exploit hypotheses \eqref{hp_intro} so that $\mu_1$ acts only on the frequencies associated with the eigenfunctions $\{c_k\}_{k\in\N}$ and $\mu_2$ only on those associated with $\{s_k\}_{k\in\N^*}$. Hence, we decompose the moment problem into two subproblems, each characterized by simple eigenvalues, making both solvable; 
\item in order to adapt the technique proposed in \cite{acu, acue}, the solutions of equation \eqref{0.1} need to be globally defined and unique. These properties may be trivial in the linear case, whereas a more careful analysis should be developed for a nonlinear equation like \eqref{0.1}. We propose a method which requires more regularity on the initial condition and on the control.  
\end{itemize}

\subsection*{Some references}

The classical controllability problem of parabolic-type equations as \eqref{0.1}, with bilinear control, is a delicate matter even in the linear case ($\kappa=0$). The main reason is a structural obstacle, described in \cite{bms}, which makes controllability in $L^2$ impossible. In detail, the results of \cite{bms} imply that the reachable set of \eqref{0.1}, with $\kappa=0$, starting from any $\psi_0\in L^2(\T^d,\R)$, is contained in a countable union of compact subsets of $L^2(\T^d,\R)$, hence it has dense complement. Therefore, this property prevents from obtaining any classical exact controllability result in $L^2$. Hence, we shall explore different notions of controllability, such as the approximate controllability or the exact controllability to trajectories. 

\smallskip

Approximate controllability results via multiplicative controls have been obtained in \cite{Kha} for $1$-D linear parabolic problems in sufficiently large times $T>0$ and between non-negative states. In \cite{Kha}, the control depends on both space and time (see also \cite{Kha1}). Similar results were proved in \cite{ FloCan1} for linear degenerate parabolic problems. Finally, approximate controllability properties of the nonlinear problem were established in \cite{FloCanKah, FloCan, FloNitTro}. 

\smallskip

The techniques leading to approximate controllability of Main Theorem A are inspired by a saturating geometric control approach introduced by Agrachev and Sarychev in \cite{AS-2006, AS-2005} in the case of additive controls. In such works, they proved global approximate controllability of the 2D~Navier-Stokes and 
Euler systems. Their approach  has been extended to study other equations with additive controls (see, for instance, \cite{AS-2008,VN-2019A, Sar-2012,shirikyan-cmp2006}). The saturating control methodology has been implemented for bilinear control problems in the recent work \cite{DucNer} of Nersesyan and the first author. In such paper, small-time approximate controllability between eigenmodes, for a non-linear Schr\"odinger equation, is proved. Main Theorem A is inspired by the approach developed in \cite{DucNer}. Nevertheless, the saturating technique for parabolic equation  as \eqref{0.1} leads to different controllability properties, such as small-time approximate controllability between states that have the same sign. For additional results on small-time controllability of PDEs through bilinear control, achieved using similar methods, we refer also to \cite{small-time-molecule,coron-2023,small-time-momentum,small-time-wave}.

\smallskip

Another controllability property, not ruled out by the negative findings of \cite{bms}, is the exact controllability to trajectories. This property was firstly studied by Alabau-Boussouira, Cannarsa and Urbani in \cite{acue,acu} in an abstract setting for parabolic PDEs by means of a scalar-input bilinear control. 
 An example of application of such results is the local and semi-global controllability of a heat equation as \eqref{0.1} (for $\kappa=0$) on the interval $(0,1)$. 
%
%
The methodology has been later extended in \cite{cdu} to network-type domains to cope with condensation phenomena of the eigenvalues of the diffusion operator. 
The approach of \cite{acue,acu} used for studying local exact controllability to eigensolutions relies on the solvability of a suitable moment problem related to the null controllability of a linearized version of the problem at hand. The resolution of moment problems has been intensively studied in literature over a long period, starting from the classical works \cite{ fattorini, fattorini1}, to the more recent ones \cite{morgan,ammar1,marbach2020, assia1, assia2,cdu,patrick1, patrick2,burgos}.

\smallskip

\subsection*{Scheme of the work}
The paper is organized as follows. In Section \ref{well} we present some preliminary results, namely, local and global well-posedness of the non-linear heat equation \eqref{0.1} and a crucial limit adopted in the proof of Main Theorem A. In Section \ref{SectA} we prove the small-time global approximate controllability and Main Theorem A. Section \ref{local} is devoted to the exact controllability to the ground state solution and contains the proof of Main Theorem B.
In the Appendices \ref{AppA} and \ref{AppB}, we respectively present the proofs of the local and global well-posedness results for equation \eqref{0.1}. Appendix \ref{AppC} contains a technical estimate that we use in the proof of Main Theorem B.

\subsection*{Acknowledgments}
The authors would like to thank the colleagues Vahagn Nersesyan and Tak\'eo Takahashi for the fruitful discussions on the saturation control method adopted in this work. Furthermore, the authors are grateful to the anonymous reviewers for reading the article carefully and giving suggestions to improve it. The third author acknowledges support from INdAM National Group for Mathematical Analysis, Probability and their Applications (GNAMPA) and from the MIUR Excellence Department Project awarded to the Department of Mathematics, University of Rome Tor Vergata, CUP E83C18000100006 (postdoc position 2020-2022). This project has received financial support from the CNRS through the MITI interdisciplinary programs.


\section{Preliminaries}\label{well}
The aim of this section is to present some preliminary results. We start by ensuring existence and uniqueness of solutions of equation \eqref{0.1}. In the second part of the section we prove a limit of conjugated dynamics, which is a key point in the proof of Main Theorem A. 

\subsection{Local and global well-posedness}
We start by stating the following local well-posedness result for the Cauchy problem \eqref{0.1}.

\begin{proposition}\label{well-pos}
    Let $s>d/2$ and $Q\in H^s(\T^d,\R^{q+2})$. For any $\psi_0\in H^s(\T^d,\R)$ and $u\in L^2_{loc}(\mathbb{R}^+,\mathbb{R}^{q+2})$ 
    there exists a maximal time $\mathcal{T}=\mathcal{T}(\psi_0,u)>0$ and a unique mild solution $\psi$ of \eqref{0.1}. Namely, for any $T<\mathcal{T}$, $\psi\in C^0([0,T],H^s(\T^d,\R))$ and is represented by the formula
    \begin{equation*}
        \psi(t;\psi_0,u)=e^{t\Delta}\psi_0+\int_0^t e^{(t-s)\Delta}\left(\langle u(s),Q(x)\rangle\psi(s,x)-\kappa\psi(s,x)^{p+1}\right)ds.
    \end{equation*}
    Moreover, if $\TT<+\infty$, then $\norm{\psi(t)}_{H^s}\to+\infty$ as $t\to \TT^-$. In addition, we have the following properties.
    
    \begin{itemize}
        \item[i.] Assume that $\psi_0,\phi_0\in B_{H^s(\T^d,\R)}(0,R)$ for some $R>0$ and $u,v\in L^2_{loc}(\R^+,\R^{q+2})$. Then, for any $0\leq T\leq \min\{\TT(\psi_0,u),\TT(\phi_0,v)\}$, there exists $C=C(u,v)$ such that
    \begin{equation}\label{psi-phi}
        \sup_{0\leq t\leq T}\norm{\psi(t;\psi_0,u)-\psi(t;\phi_0,v)}_{H^s}\leq C\left(\norm{\psi_0-\phi_0}_{H^s}+\norm{u-v}_{L^2}\right).
    \end{equation}

\item[ii.] Set $ K= \|\psi\|_{C([0,T],H^s)}+\| \psi_0\|_{H^s}+\| u\|_{L^2}$. There exists $\delta=\delta(\TT(\psi_0,u), K)>0$ such that, for any $\hat\psi_0\in H^s(\T^d,\R) $   and $\hat u  \in L^2((0,T), \R^{q+2 })$  satisfying  
 \begin{equation}\label{stability}
\|\hat\psi_0- \psi_0\|_{H^s}+ \|\hat u - u \|_{L^2}   < \delta,
\end{equation} 

problem \eqref{0.1} admits a unique mild solution $\hat\psi\in C\big([0,T],H^s(\T^d,\R)\big)$ with initial condition $\hat \psi_0$ and control $\hat u$.
    \end{itemize}
\end{proposition}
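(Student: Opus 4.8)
The plan is to realize the mild solution as the unique fixed point of the Duhamel map
\[
\Phi(\psi)(t) := e^{t\Delta}\psi_0 + \int_0^t e^{(t-\sigma)\Delta}\big(\langle u(\sigma),Q\rangle\,\psi(\sigma) - \kappa\,\psi(\sigma)^{p+1}\big)\,d\sigma
\]
on the Banach space $X_T := C^0([0,T],H^s(\T^d,\R))$, and then to upgrade the resulting local solution into a maximal one. The two structural facts I would lean on throughout are: (a) for $s>d/2$ the Sobolev space $H^s(\T^d)$ is a Banach algebra, so that $\norm{fg}_{H^s}\le C_s\norm{f}_{H^s}\norm{g}_{H^s}$, whence $\norm{\psi^{p+1}}_{H^s}\le C_s^{p}\norm{\psi}_{H^s}^{p+1}$ and $\norm{\langle u(\sigma),Q\rangle\psi(\sigma)}_{H^s}\le C_s\,|u(\sigma)|\,\norm{Q}_{H^s}\norm{\psi(\sigma)}_{H^s}$; and (b) the heat semigroup $e^{t\Delta}$, being the Fourier multiplier $e^{-t|k|^2}$ on $\T^d$, is a contraction semigroup on each $H^s(\T^d)$ and is strongly continuous there. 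The only place where the mere $L^2$-in-time regularity of $u$ enters is the elementary Cauchy--Schwarz bound $\int_0^t|u(\sigma)|\,d\sigma\le \sqrt{t}\,\norm{u}_{L^2(0,t)}$, which supplies the factor of smallness in $T$ needed below.

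First I would fix $R:=2\norm{\psi_0}_{H^s}$ and show that, for $T$ small (depending only on $R$, on $\norm{Q}_{H^s}$, and on $\norm{u}_{L^2(0,T)}$), the map $\Phi$ sends the closed ball $\overline{B}_{X_T}(0,R)$ into itself: strong continuity of the semigroup and the $L^1$-in-time integrand guarantee $\Phi(\psi)\in X_T$, and using (a), (b) and Cauchy--Schwarz one gets $\norm{\Phi(\psi)(t)}_{H^s}\le \norm{\psi_0}_{H^s}+C\sqrt{T}\,\norm{u}_{L^2}R + C\,T\,R^{p+1}$, the last two terms being $\le R/2$ once $T$ is small. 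Next I would show $\Phi$ is a contraction on that ball: the difference $\Phi(\psi)-\Phi(\phi)$ involves $\langle u,Q\rangle(\psi-\phi)$ and $\psi^{p+1}-\phi^{p+1}$, the latter factored as $(\psi-\phi)\sum_{j=0}^{p}\psi^{j}\phi^{p-j}$, so that the algebra property and the same $T$-smallness yield a Lipschitz constant $<1$ after possibly shrinking $T$. The Banach fixed point theorem then produces the unique $\psi\in X_T$ solving the Duhamel equation, which is by definition the mild solution; uniqueness on overlapping intervals lets me glue local solutions and define $\TT$ as the supremum of lengths of intervals on which a solution exists.

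The blow-up alternative is the first genuinely non-routine point. I would emphasize that the local existence time just constructed is bounded below by a positive function of $\norm{\psi_0}_{H^s}$ and of $\norm{u}_{L^2}$ on the relevant interval \emph{only}. Hence, if $\TT<+\infty$ but $\limsup_{t\to\TT^-}\norm{\psi(t)}_{H^s}<\infty$, then choosing $t_0<\TT$ with $\norm{\psi(t_0)}_{H^s}$ bounded and restarting the Cauchy problem at $t_0$ — using that $\norm{u}_{L^2(t_0,\TT+\eta)}$ is small for $t_0$ close to $\TT$ by absolute continuity of the integral — would extend the solution past $\TT$, contradicting maximality; this forces $\norm{\psi(t)}_{H^s}\to+\infty$.

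For part (i) I would subtract the Duhamel formulas for $\psi(\cdot;\psi_0,u)$ and $\psi(\cdot;\phi_0,v)$, bound the differences of the control and nonlinear terms using (a) and the factorization above — all $H^s$ norms staying controlled by $R$ on $[0,T]$ — and close with Gr\"onwall's inequality, the constant $C(u,v)$ absorbing $\norm{u}_{L^2}$, $\norm{v}_{L^2}$ and $T$. Part (ii) is then essentially a corollary: since the local existence time depends only on upper bounds for the data norms, any $(\hat\psi_0,\hat u)$ within a small enough $\delta$ of $(\psi_0,u)$ admits a solution on the full interval $[0,T]$, and (i) guarantees it remains close to $\psi$. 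I expect the bookkeeping in the continuation/blow-up step — in particular making the lower bound on the local existence time uniform in the $L^2$-in-time norm of $u$ near the putative blow-up time — to be the most delicate part, the fixed-point estimates themselves being routine given the algebra property of $H^s$.
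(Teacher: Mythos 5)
Your proposal is correct and follows essentially the same route as the paper's Appendix~\ref{AppA}: a Banach fixed-point argument for the Duhamel map on $C^0([0,T],H^s)$ using the algebra/embedding properties of $H^s$ for $s>d/2$ and Cauchy--Schwarz in time for the $L^2$ control, followed by continuation to a maximal interval, the blow-up alternative, a Gr\"onwall-type absorption argument for \eqref{psi-phi}, and \eqref{stability} as a consequence of the uniform lower bound on the local existence time together with item \emph{i}. The only cosmetic difference is that the paper makes the $n$-th iterate $\Phi^n$ a contraction via the $(\sqrt{t_1})^n/\sqrt{n!}$ factor instead of shrinking $T$ so that $\Phi$ itself contracts; both are standard and equivalent.
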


The proof of Proposition \ref{well-pos} follows from a fix point argument and from Sobolev embeddings for $s>d/2$. It can be found in Appendix \ref{AppA}. 

\smallskip
We further present a global well-posedness result for equation \eqref{0.1} in the case $d=1$, $\kappa\geq 0$ and $p\in 2\N$.

\begin{proposition}\label{thm-global-well-pos}
Let $d=1$, $p\in 2\N$, $\psi_0\in H^3(\T,\R)$, $Q\in H^3(\T,\R^{q+2})$, $u\in H^1_{loc}((0,+\infty),\R^{q+2})$ and $\kappa\geq0$. Then, for any $T>0$ there exists a unique mild solution $\psi\in C^0([0,T],H^3(\T,\R))$ of \eqref{0.1}.
\end{proposition}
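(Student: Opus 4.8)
The plan is to combine the local theory and blow-up alternative of Proposition \ref{well-pos} (applied with $d=1$, $s=3>1/2$) with a global-in-time a priori bound on $\norm{\psi(t)}_{H^3}$. Proposition \ref{well-pos} already furnishes a unique maximal mild solution $\psi\in C^0([0,\TT),H^3(\T,\R))$, together with the blow-up criterion: if $\TT<+\infty$ then $\norm{\psi(t)}_{H^3}\to+\infty$ as $t\to\TT^-$. Hence it suffices to show that, for every finite $T<\TT$, the quantity $\sup_{[0,T]}\norm{\psi(t)}_{H^3}$ is bounded by a constant depending only on $T,\psi_0,u,Q,\kappa,p$ and not on $\TT$; such a bound is incompatible with blow-up and forces $\TT=+\infty$, while uniqueness is inherited from Proposition \ref{well-pos}. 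Since in one dimension $H^3(\T)\hookrightarrow C^2(\T)$, the integrations by parts below are meaningful, and the time-differentiation of the energies can be justified for the mild $H^3$ solution by a standard approximation argument (mollifying $\psi_0$ and $u$, estimating the resulting regular solutions, and passing to the limit via the continuous dependence \eqref{psi-phi}). Throughout I would use $u\in H^1_{loc}\hookrightarrow C^0$ to bound $u(t)$ pointwise in time and $Q\in H^3(\T)\hookrightarrow C^2(\T)$.

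The first two estimates exploit the favourable sign of the nonlinearity, which is exactly the role of $\kappa\geq0$ and $p\in2\N$. First I would test the equation against $\psi$, obtaining
\begin{equation*}
\tfrac12\tfrac{d}{dt}\norm{\psi}_{L^2}^2=-\norm{\p_x\psi}_{L^2}^2-\kappa\int_\T\psi^{p+2}\,dx+\int_\T\lag u(t),Q\rag\,\psi^2\,dx .
\end{equation*}
As $p$ is even, $p+2$ is even and $\psi^{p+2}\geq0$, so with $\kappa\geq0$ the nonlinear term is nonpositive; bounding the last term by $\norm{Q}_{L^\infty}\,|u(t)|\,\norm{\psi}_{L^2}^2$ and applying Grönwall yields an $L^2$ bound on $[0,T]$. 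Next, testing against $-\p_x^2\psi$ and integrating by parts produces the dissipative nonlinear contribution $-\kappa(p+1)\int_\T\psi^p(\p_x\psi)^2\leq0$, so the same scheme (controlling the term $\int_\T\lag u,Q\rag\psi(-\p_x^2\psi)$ via $Q\in C^1$ and the $L^2$ bound) gives an $H^1$ bound on $[0,T]$. By the one-dimensional embedding $H^1(\T)\hookrightarrow L^\infty(\T)$ this furnishes $M:=\sup_{[0,T]}\norm{\psi(t)}_{L^\infty}<+\infty$; equivalently, this $L^\infty$ bound can be obtained directly from the maximum principle, since $-\kappa\psi^{p+1}$ is sign-dissipative when $p$ is even and $\kappa\geq0$.

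For the remaining estimates I would test the equation successively against $\p_x^4\psi$ and $-\p_x^6\psi$ to control $\norm{\p_x^2\psi}_{L^2}$ and then $\norm{\p_x^3\psi}_{L^2}$. In each case the diffusion supplies the dissipation $-\norm{\p_x^{k+1}\psi}_{L^2}^2$, the control term differentiates in $x$ only onto $Q$ (using $Q\in H^3$) and onto $\psi$ (using the lower-order bounds already in hand), and the nonlinear term $\p_x^k(\psi^{p+1})$ expands by the Leibniz rule into products of derivatives of $\psi$ of total order $k$. Here the sign structure no longer helps, and this is the main obstacle: one must bound these polynomial terms using the $L^\infty$ bound $M$ together with one-dimensional Gagliardo--Nirenberg interpolation inequalities, arranging that any top-order factor $\p_x^{k+1}\psi$ is absorbed into the dissipation through Young's inequality while the remaining factors are estimated by $\norm{\psi}_{H^k}$ and $M$. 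This converts each higher-order identity into a Grönwall-type differential inequality, closing the $H^2$ and then the $H^3$ bound on $[0,T]$. Combining the four estimates gives the uniform control of $\norm{\psi(t)}_{H^3}$ on every $[0,T]$ with $T<\TT$, hence $\TT=+\infty$, which is the claim.
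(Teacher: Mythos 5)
Your proposal is correct, and the core energy estimates (the $L^2$ and $H^1$ bounds exploiting that $\kappa\int_\T\psi^{p+2}\,dx\geq 0$ and $\kappa(p+1)\int_\T\psi^p(\p_x\psi)^2\,dx\geq 0$ when $\kappa\geq0$ and $p$ is even) coincide with the paper's. The routes diverge in two technical respects. First, the paper stops at the $H^1$ a priori bound and invokes the blow-up alternative at the level $s=1>d/2$, so it never performs your $H^2$ and $H^3$ estimates; the price is that the passage from "the $H^1$ solution is global" to the stated conclusion "$\psi\in C^0([0,T],H^3)$ for all $T$" is left implicit (a persistence-of-regularity step), whereas your plan of testing against $\p_x^4\psi$ and $-\p_x^6\psi$, absorbing top-order terms into the dissipation via Gagliardo--Nirenberg and the $L^\infty$ bound, closes the $H^3$ bound directly and matches the $H^3$ blow-up criterion of Proposition \ref{well-pos} with $s=3$; note that the leading nonlinear contributions $-\kappa(p+1)\int_\T\psi^p(\p_x^k\psi)^2\,dx$ remain sign-favourable at every order, which makes your scheme go through cleanly. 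Second, to legitimise the time-differentiation in the $H^1$ identity (specifically the term $\int_\T\p_x(\p_t\psi)\,\p_x\psi\,dx$), the paper first proves a separate regularity result for $\Psi=\psi_t$ (Proposition \ref{prop-reg-psi-t}, via a fixed point for the linearised equation and a difference-quotient argument); this is precisely where the hypotheses $\psi_0\in H^3$ and $u\in H^1_{loc}$ enter in the paper. You instead propose to justify all the identities by mollifying the data and passing to the limit through the continuous dependence \eqref{psi-phi}; this is a legitimate and arguably more elementary alternative, though if you follow it you should make explicit that the a priori constants for the regularised solutions depend only on $\norm{\psi_0}_{H^3}$, $\norm{u}_{H^1(0,T)}$, $\norm{Q}_{H^3}$, $T$, $\kappa$, $p$, so that they survive the limit.
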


The proof of Proposition \ref{thm-global-well-pos} follows from some energy estimates and can be found in Appendix \ref{AppB}. 


\subsection{Small-time limit of conjugated dynamics}\label{sandw}
Let us introduce the nonlinear operator
 \begin{equation}\label{eq:saturation-operator}
 	\B(\varphi)(x)=\sum_{j=1}^d\left(\p_{x_j} \varphi(x)\right)^2,\qquad \forall\,\varphi \in C^1(\T^d,\R).
 \end{equation}
Then, the following result holds true.

\begin{proposition}\label{P:1.2}   
 Let $s\in\N^*$, $s>d/2$ and $(Q_1,...,Q_q)\in H^{2s+1}(\T^d,\R^{q})$. Assume that $\psi_0  \in H^s(\T^d,\R)$, $(u_1,...,u_q)\in \R^{q}$, and $\varphi \in H^{2s+1}(\T^d,\R)$ to be non-negative. Then, there exists a constant $\de_0>0$ such that, for any $\de\in (0,\de_0)$, the solution $\psi(t;e^{-\delta^{-1/2}\varphi}\psi_0,\de^{-1} u)$ of \eqref{0.1} with $u=(u_1,...,u_q,0,0)$ is well-defined in $[0,\delta]$. Furthermore, the following limit holds    
\begin{equation*}
 	e^{\delta^{-1/2}\varphi} \psi(\de;e^{-\delta^{-1/2}\varphi}\psi_0,\de^{-1} u)\to  e^{\B(\varphi)+\lag u,Q \rag}\psi_0  \quad\text{in $H^s$, as $\de\to 0^+$}.
\end{equation*}
\end{proposition}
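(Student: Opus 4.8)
The plan is to remove the singular $\delta$-dependence by a gauge (conjugation) transformation followed by a parabolic rescaling of time, thereby reducing the statement to a Trotter--Kato-type convergence for a family of evolution equations whose generators tend to the \emph{bounded} multiplication operator $L:=\B(\varphi)+\lag u,Q\rag$. First I would set $g:=\delta^{-1/2}\varphi$, write $\Psi(t):=\psi(t;e^{-g}\psi_0,\delta^{-1}u)$ for the solution of \eqref{0.1}, and introduce the conjugated unknown $\eta:=e^{g}\Psi$. Using the identity $e^{g}\Delta(e^{-g}\eta)=\Delta\eta-2\nabla g\cdot\nabla\eta+(|\nabla g|^2-\Delta g)\eta$ together with $g=\delta^{-1/2}\varphi$, equation \eqref{0.1} becomes
\[
\p_t\eta=\delta^{-1}L\,\eta+\delta^{-1/2}A\,\eta+\Delta\eta-\kappa\,e^{-p\delta^{-1/2}\varphi}\eta^{p+1},\qquad A:=-2\nabla\varphi\cdot\nabla-\Delta\varphi,
\]
where $L=\B(\varphi)+\lag u,Q\rag$ is multiplication by a fixed $H^{2s}$ function. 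Rescaling time by $\tau=t/\delta$ and setting $\tilde\eta(\tau):=\eta(\delta\tau)$ yields, on $\tau\in[0,1]$,
\[
\p_\tau\tilde\eta=L\,\tilde\eta+\delta^{1/2}A\,\tilde\eta+\delta\,\Delta\tilde\eta-\delta\kappa\,e^{-p\delta^{-1/2}\varphi}\tilde\eta^{\,p+1},\qquad\tilde\eta(0)=\psi_0,
\]
the initial datum being exactly $\psi_0$ since $e^{g}e^{-g}\psi_0=\psi_0$. Because $\tilde\eta(1)=e^{\delta^{-1/2}\varphi}\Psi(\delta)$ and the time-$1$ flow of the multiplication operator $L$ is multiplication by $e^{L}=e^{\B(\varphi)+\lag u,Q\rag}$, the asserted limit is precisely $\tilde\eta(1)\to e^{L}\psi_0$ in $H^s$.

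Next I would establish uniform-in-$\delta$ bounds and well-posedness on $[0,1]$ by an $H^s$ energy estimate on the rescaled equation. The term $\lag L\tilde\eta,\tilde\eta\rag_{H^s}$ is bounded by $\|L\|\,\norm{\tilde\eta}_{H^s}^2$ (a Gronwall term); the dissipative term $\delta\lag\Delta\tilde\eta,\tilde\eta\rag_{H^s}=-\delta\norm{\nabla\tilde\eta}_{H^s}^2$ has the right sign and, through Young's inequality, absorbs the first-order contribution $\delta^{1/2}\lag A\tilde\eta,\tilde\eta\rag_{H^s}\lesssim\delta^{1/2}\norm{\nabla\tilde\eta}_{H^s}\norm{\tilde\eta}_{H^s}$. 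The hypothesis $\varphi\ge0$ is used precisely in the nonlinear term: since $e^{-p\delta^{-1/2}\varphi}\le1$ pointwise, its contribution stays controlled, whereas it would blow up exponentially if $\varphi$ were negative somewhere. These estimates bound $\sup_{[0,1]}\norm{\tilde\eta}_{H^s}$ uniformly for $\delta\in(0,\delta_0)$ and, via the blow-up alternative of Proposition \ref{well-pos}, guarantee that the solution exists on all of $[0,1]$ (equivalently that $\Psi$ exists on $[0,\delta]$) once $\delta_0$ is small.

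For the passage to the limit I would compare $\tilde\eta$ with $\zeta_\delta(\tau):=e^{\tau(\delta\Delta+L)}\psi_0$. Since $L$ is bounded and $(\delta\Delta+L)f\to Lf$ in $H^s$ for $f$ in the core $H^{s+2}$, while the propagators are uniformly bounded on $[0,1]$ by the energy estimate, a Trotter--Kato argument (made quantitative on $H^{s+2}$ and extended to $\psi_0\in H^s$ by density) gives $\zeta_\delta(1)\to e^{L}\psi_0$ in $H^s$. Writing $w:=\tilde\eta-\zeta_\delta$ and using Duhamel's formula against the propagator $U_\delta$ of the full linear part, $w(\tau)=\int_0^\tau U_\delta(\tau-\sigma)\big[\delta^{1/2}A\tilde\eta-\delta\kappa\,e^{-p\delta^{-1/2}\varphi}\tilde\eta^{\,p+1}\big]\,d\sigma$, I would then show $\sup_{[0,1]}\norm{w}_{H^s}\to0$; evaluating at $\tau=1$ and combining with the Trotter--Kato step yields the conclusion.

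The main obstacle is the genuine vanishing of these two forcing terms, uniformly in $\delta$. The first-order term $\delta^{1/2}A\tilde\eta$ is borderline: the parabolic smoothing of $U_\delta$ gains one derivative only at the cost of a factor $\delta^{-1/2}$, which exactly cancels the prefactor, so any raw estimate gives only an $O(1)$ bound; its actual decay has to be extracted from the Trotter--Kato/density mechanism rather than from smoothing. The nonlinear term is the other crux: even though $e^{-p\delta^{-1/2}\varphi}\le1$ pointwise, its $H^s$-norm grows as $\delta\to0^+$ because each derivative produces a factor $\delta^{-1/2}\nabla\varphi$, and one must show that the $\delta$-prefactor, combined with the quantitative damping coming from $\varphi\ge0$ (at a zero of $\varphi$ the gradient $\nabla\varphi$ also vanishes, so the large factors are suppressed by the Gaussian-type weight $e^{-p\delta^{-1/2}\varphi}$) together with the dissipation $-\delta\norm{\nabla\tilde\eta}_{H^s}^2$, is enough to send this contribution to zero in $H^s$. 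I expect the sharp quantitative estimate on $e^{-p\delta^{-1/2}\varphi}$ to be where the main technical effort concentrates.
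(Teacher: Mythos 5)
Your reduction is exactly the paper's: conjugate by $e^{\delta^{-1/2}\varphi}$, rescale time by $\delta$, and recognize the target as the time-one flow of multiplication by $L=\B(\varphi)+\lag u,Q\rag$; your rescaled equation coincides with the paper's equation for the conjugated unknown. The problem is that your closing mechanism --- Duhamel against $U_\delta=e^{\tau(\delta\Delta+L)}$ plus a Trotter--Kato step --- does not close precisely at the two points you yourself flag, and you do not supply the ideas that would resolve them. For the transport term $\delta^{1/2}\nabla\varphi\cdot\nabla\tilde\eta$, saying the decay ``has to be extracted from the Trotter--Kato/density mechanism'' is not an argument: a density argument in $\psi_0$ would require a uniform-in-$\delta$ $H^{s+1}$ a priori bound \emph{and} a uniform-in-$\delta$ Lipschitz dependence of the flow on the $H^s$ datum, neither of which you establish (the stability constant in Proposition \ref{well-pos} degenerates as the control $\delta^{-1}u$ blows up, so it cannot be quoted as is). The paper avoids the derivative loss altogether by running an $H^s$ \emph{energy estimate on the difference} $v=\tilde\eta-w$ rather than a Duhamel bound: testing $\p^{\alpha}$ of the equation against $\p^{\alpha}v$ and integrating by parts turns $\lag\nabla\varphi\cdot\nabla\p^{\alpha}v,\p^{\alpha}v\rag_{L^2}$ into $-\tfrac12\lag\Delta\varphi\,\p^{\alpha}v,\p^{\alpha}v\rag_{L^2}$, so the first-order term contributes only $C\delta^{1/2}\norm{v}_{H^s}^2$, a harmless Gr\"onwall term.

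The second missing ingredient is the treatment of the reference solution. Instead of $\zeta_\delta=e^{\tau(\delta\Delta+L)}\psi_0$ and Trotter--Kato, the paper compares with $w(t)=e^{tL}\psi_0^{\delta}$ where $\psi_0^{\delta}=e^{\delta^{1/4}\Delta}\psi_0$ is a $\delta$-dependent mollification satisfying $\norm{\psi_0-\psi_0^{\delta}}_{H^s}\to0$ and $\norm{\psi_0^{\delta}}_{H^{s+2}}\le C\delta^{-1/4}$. This makes the residual source terms quantitatively small: $\delta\norm{\Delta w}_{H^s}=O(\delta^{3/4})$ and $\delta^{1/2}\norm{\nabla\varphi\cdot\nabla w}_{H^s}=O(\delta^{1/4})$, after which Gr\"onwall plus a continuity argument on $\tau^{\delta}=\sup\{t:\norm{v(t)}_{H^s}<1\}$ gives both the existence on $[0,\delta]$ and the convergence; no semigroup-convergence theorem is needed. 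Finally, for the nonlinearity you are right that $\norm{e^{-p\delta^{-1/2}\varphi}}_{H^s}$ is the delicate quantity, but your heuristic that $\nabla\varphi$ vanishes at zeros of $\varphi$ does not by itself suppress the $\delta^{-s/2}$ factors (e.g.\ $\varphi=\sin^2 x$ near $x=0$ still leaves growing powers of $\delta^{-1/4}$); the paper's estimate keeps the prefactor $\delta\,\norm{e^{-p\delta^{-1/2}\varphi}}_{L^\infty}(\delta^{-s/2}+1)$ explicit and relies on positivity of $\varphi$ to send it to zero. As written, your proposal is a correct skeleton with the two decisive estimates left open.
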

 \begin{proof} {\bf Preliminaries.}
For any $\delta>0$ we set $\phi(t):=  	e^{\delta^{-1/2}\varphi} \psi(t;e^{-\delta^{-1/2}\varphi}\psi_0,\de^{-1} u)$. According to Proposition \ref{well-pos}, $\phi(t)$ exists up to some maximal time $\TT^\delta=\TT(e^{-\delta^{-1/2}\varphi}\psi_0,\de^{-1} u)$ and it has the same regularity of $\psi$. Furthermore, if $\TT^\delta<\ty$
$$
     \| e^{-\delta^{-1/2}\varphi}\phi(t)\|_{H^s}\to +\infty \quad\text{as~$t\to  \TT^{\delta-}$.}
$$
We  introduce the following functions
\begin{align}\label{3.2}
    w(t)  =   e^{\left(\B(\varphi)+\lag u,Q \rag\right)t}\psi_0^\delta,\qquad   v(t)  = \phi(\delta t)-w(t),
\end{align}
where 
 $\psi_0^\delta=e^{\delta^{ 1/4} 
\Delta}\psi_0 \in H^{r}(\T^d,\R)$, with $r=s+2$, is such that 
\begin{align}\label{mollif}\|	\psi_0-\psi_0^\delta\|_{H^s}\to 0,  \qquad \text{ as $\delta\to 0^+$,}\end{align}
and  there exists $C>0$ independent of $\delta>0$ such that
 \begin{align}\label{3.3}
 & \|\psi_0^\delta\|_{H^s}  \le C  ,\qquad\|\psi_0^\delta\|_{H^r} \le C   \delta^{-1/4},  \qquad \text{for } \de\le 1.
 \end{align}
Our aim is to show that $\phi(\delta)\xrightarrow{\delta\rightarrow 0^+}  e^{ \left(\B(\varphi)+\lag u,Q \rag\right)}\psi_0$ in $H^s$. Thanks to \eqref{mollif} it is sufficient to prove that
\begin{align}\label{lim}\|v(t=1)\|_{H^s}\xrightarrow{\delta\rightarrow 0^+}  0  .\end{align}
However, before proving \eqref{lim}, we need to ensure the existence of $\delta_0>0$ small enough such that, for every $0<\delta<\delta_0$, $v (t)$ is well-defined in $[0,1]$, that is,
\begin{align}\label{minim-time}\delta^{-1} \TT^\delta\geq 1.\end{align}

\noindent
 {\bf An intermediate inequality.} In view of \eqref{3.3}, there exists $C>0$ such that, for every $t\in [0,2]$,
\begin{align} 
\|w(t)\|_{H^s}\le C, \quad\,\,\quad\quad \|w(t)\|_{H^r}\le C{\delta^{-1/4}}.\label{3.6}	
\end{align}
We observe that $v$ is a solution of the following equation
\begin{equation}\label{3.7}
\begin{split}
    \p_t v&=\delta\Delta (v+w) - \delta \kappa  (e^{-\delta^{-\frac{1}{2}}\varphi}(v+w))^{p}(v+w)-\delta^{1/2} (v+w) \Delta \varphi\\
	&\quad  - 2 \delta^{\frac12}\nabla  (v+w) \cdot  \nabla \varphi +\B(\varphi)v+\lag u,Q\rag v,
\end{split}
\end{equation}
with initial condition 
     \begin{equation}\label{3.8}
      v(0)=\psi_0-\psi_0^\delta.
      \end{equation}
Let us start by assuming that $\psi_0\in H^{2s+2}(\T^d,\R)$ which implies $\psi(t)\in H^{2s+2}(\T^d,\R)$ and then $v(t)\in H^{2s+2}(\T^d,\R)$ for every $t\in (0,\TT^\delta)$. 
Let $\alpha=(\alpha_1, \ldots, \alpha_d)\in \N^d$ be such that $|\alpha|=|\alpha_1|+\ldots+|\alpha_d|\le s$.
Thanks to the accreativity of the operator $-\Delta$, we get 

\begin{align}
\p_t\| \p^{\alpha}v\|_{L^2}^2&=  -2 \delta \lag (-\Delta \p^{\alpha} v), \p^{\alpha}v\rag_{L^2} +  2\lag  \delta\Delta w+ \delta \kappa  (e^{-\delta^{-\frac{1}{2}}\varphi}(v+w))^{p}(v+w)\nonumber \\
& \quad  - \delta^{\frac{1}{2}} (v+w) \Delta \varphi+ 2 \delta^{\frac12}\nabla  (v+w) \cdot  \nabla \varphi +\B(\varphi)v+\lag u,Q\rag v,\p^{2\alpha }v\rag_{L^2}\nonumber \\ 
& \leq  2\lag  \delta\Delta w+ \delta \kappa  e^{-p\delta^{-\frac{1}{2}}\varphi}(v+w)^{p+1}- \delta^{\frac{1}{2}} (v+w) \Delta \varphi\nonumber \\
& \quad  + 2 \delta^{\frac12}\nabla  (v+w) \cdot  \nabla \varphi +\B(\varphi)v+\lag u,Q\rag v,\p^{2\alpha }v\rag_{L^2}\nonumber \\ 
&\le 2\Big(\delta |\lag \p^{\alpha }\Delta w, \p^{\alpha }v\rag_{L^2}| + \delta  |\kappa|\big|\big\lag  \p^{\alpha}\big( e^{-p\delta^{-\frac{1}{2}}\varphi}(v+w)^{p+1}\big), \p^{\alpha }v\big\rag_{L^2}\big|\nonumber\\ 
&\quad  + \delta^{\frac{1}{2}}|\lag  \p^{\alpha }\big((v+w) \Delta \varphi\big), \p^{\alpha }v\rag_{L^2}|+  \delta^{1/2}|\lag  \p^{\alpha }\big(\nabla  (v+w) \cdot  \nabla \varphi\big), \p^{\alpha }v\rag_{L^2}|\\
&\quad+|\lag \p^{\alpha }\big(\B(\varphi)v\big), \p^{\alpha }v\rag_{L^2}|+|\big \lag \p^{\alpha }\big( \lag u,Q\rag v\big), \p^{\alpha }v\big\rag_{L^2}|\Big).\nonumber
\end{align}
We observe that since all the functions involved in the above estimate are in $H^{2s+2}(\T^d,\R)$, no boundary terms appear when integrating by parts.
Note that there exists a constant $C>0$, independent of $\delta$, such that
$|\lag  \nabla v \cdot  \nabla \varphi, \p^{2\alpha }v\rag_{L^2}|\leq  C\|\varphi\|_{ H^{2s+1}} \|v\|_{H^s}^2$ and
\begin{align*}
    \big|\big\lag  \p^{\alpha}\big( e^{-p\delta^{-\frac{1}{2}}\varphi}(v+w)^{p+1}\big), \p^{\alpha }v\big\rag_{L^2}\big|&\leq C \|e^{-p\delta^{-\frac{1}{2}}\varphi}\|_{L^\infty} \|\varphi\|_{H^{s+1}}^s\Big(p^s\delta^{-\frac{s}{2}} 
    +1\Big) \|v+w\|_{H^s}^{p+1}\|v\|_{H^s}.
\end{align*}
Hence, we deduce the existence of a constant $C>0$, independent of $\delta$, such that
\begin{align*}
    \p_t\| \p^{\alpha}v\|_{L^2}^2&\le C\de \|w\|_{H^r} \|v\|_{H^s}  + C \delta \|e^{-p\delta^{-\frac{1}{2}}\varphi}\|_{L^\infty} (\delta^{-\frac{s}{2}} +1)  \|v+w\|_{H^s}^{p+1} \|v\|_{H^s}+C \delta^{\frac{1}{2}} \|v+w\|_{H^s} \|v\|_{H^s}\nonumber\\ 
    &\quad    + C \delta^{1/2}  \|v\|_{H^s}^2+C \delta^{1/2}  \|w\|_{H^r}\|v\|_{H^s}+C  \|v\|_{H^s}^2+C  \|v\|_{H^s}^2.\nonumber
\end{align*}
Now, we have that $\|e^{-\delta^{-\frac{1}{2}}\varphi}\|_{L^\infty} (\delta^{-\frac{s}{2}} +1) \xrightarrow[]{\delta\rightarrow 0} 0$ thanks to the positivity of $\varphi$. Using \eqref{3.6} and the Young's inequality we deduce that there exists $C>0$, independent of $\delta$, such that
\begin{align*}
    \p_t\| \p^{\alpha}v\|_{L^2}^2&\le C \delta^{3/4} \|v\|_{H^s}  +  C\delta \|v\|_{H^s}^{p+2} + C \delta  \|v\|_{H^s} +C \delta^{\frac{1}{2}} \|v\|_{H^s}^2+C \delta^{\frac{1}{2}}  \|v\|_{H^s}\nonumber\\ 
    &\quad  + C \delta^{1/2}  \|v\|_{H^s}^2+C \delta^{1/4}  \|v\|_{H^s}  
    +C  \|v\|_{H^s}^2+C  \|v\|_{H^s}^2\\
    &\leq C \delta^{1/2}+  C(1+\delta^{1/2})\|v\|_{H^s}^2+ C\delta \|v\|_{H^s}^{p+2} .\nonumber
\end{align*}
The last relation holds for $t\le  \delta^{-1}\TT^\delta$. 
We recall that $\|\cdot\|_{H^s}^2=\sum_{\underset{|\alpha|\le s}{\alpha\in \N^d}}\| \p^{\alpha}\cdot\|_{L^2}^2$ and therefore there exists $C>0$ such that
\begin{align*}
    \p_t\| v\|_{H^s}^2 &\leq C \delta^{1/2}+  C(1+\delta^{1/2})\|v\|_{H^s}^2+ C\delta \|v\|_{H^s}^{p+2} .\nonumber
\end{align*}
By the Gr{\"o}nwall Lemma and recalling \eqref{3.8} we obtain that
\begin{equation}\label{3.10}
    \|v(t)\|_{H^s}^2 \le e^{C(1+\delta^{1/2})t} \left(C \delta^{1/2} t+ \|\psi_0-\psi_0^\delta\|_{H^s}^2+C\delta\int_0^t \|v(y)\|_{H^s}^{p+2}\dd y\right),
\end{equation}
 for $   t\le  \delta^{-1}\TT^\delta$ and for every $\psi_0\in H^{2s+2}(\T^d,\R)$. Finally, we can extend the validity of \eqref{3.10} for every $\psi_0\in H^{s}(\T^d,\R)$ thanks to item \emph{ii.} of Proposition \ref{well-pos} and to the density of 
$H^{2s+2}(\T^d,\R)$ into $H^{s}(\T^d,\R)$ with respect to the $H^{s}$-norm.

\smallskip

\noindent
 {\bf Properties of the maximal time.} It remains to prove \eqref{minim-time}.  We consider $ \delta_0>0$ sufficiently small so that, for $0< \delta<  \delta_0$, we have $\|\psi_0-\psi_0^\delta\|_{H^s}^2< 1/2$ and then
 $$\|v(0)\|_{H^s}^{2}<1/2.$$ 
Denote $\tau^\de:=\sup\left\{t<  \delta^{-1}\TT^\delta: \|v(t)\|_{H^s}<1\right\}$. The above inequality yields $\tau^\de>0$. If $   \tau^\de=+\infty$ then \eqref{minim-time} is obviously satisfied. Thus, let us assume that $$\tau^\de<+\infty.$$ 
To prove \eqref{minim-time} we show that for $ \delta_0>0$ sufficiently small and $0< \delta<  \delta_0$ we have $\tau^\de\geq 1$. Assume by contradiction that, for every $ \delta_0>0$ small, there exists $0< \delta<  \delta_0$ such that $\tau^\de<  1$. Thanks to \eqref{3.10}, we get
\begin{align}\label{contradiction}
    1=\|v(\tau^\de)\|_{H^s}^2< e^{C(1+\delta^{1/2})\tau^\de} \left(C \delta^{1/2} \tau^\de+ \|\psi_0-\psi_0^\delta\|_{H^s}^2+C\delta\int_0^{\tau^\de} \|v(y)\|_{H^s}^{p+2}\dd y\right).
\end{align}
We recall that, by definition, $\|v(t)\|_{H^s}<1$ in $[0,\tau^\de)$. For $\delta_0$ sufficiently small we have
\begin{gather} 
    e^{C(1+\delta^{1/2})\tau^\de}\left(C\delta^{1/2}\tau^\de+\|\psi_0-\psi_0^\delta\|_{H^s}^2\right)<\frac12, \label{3.12}
\end{gather} 
since  $0< \delta<  \delta_0$. Moreover,
\begin{align*}&e^{C(1+\delta^{1/2})\tau^\de} \left(C \delta^{1/2} \tau^\de+ \|\psi_0-\psi_0^\delta\|_{H^s}^2+C\delta\int_0^{\tau^\de} \|v(y)\|_{H^s}^{p+2}\dd y\right)< 1,
\end{align*}
which contradicts \eqref{contradiction}. Hence, we conclude that there exists $\delta_0>0$ sufficiently small such that $\tau^\de> 1$ for every $0<\delta<\delta_0$. Thus, \eqref{minim-time} holds true.

\smallskip

\noindent
 {\bf Conclusion.} Finally, $1\in[0,\tau^\delta)\subset[0,\delta^{-1}\TT^\de)$ and thanks to \eqref{3.10} we have established the validity of \eqref{lim} since
$$
\|v(1)\|_{H^s}^2\le e^{C(1+\delta^{1/2})} \left(C\delta^{1/2}+ \|\psi_0-\psi_0^\delta\|_{H^s}^2+C\delta\right)\to 0 \quad \text{as $\delta\to 0^+$}.
$$  
\end{proof}

A first simple consequence of  Proposition \ref{P:1.2}, concerning the control of \eqref{0.1}, is the following small-time global approximate null controllability.

\begin{corollary}\label{coro_null}
    Let $s\in\N^*$ be such that $s>d/2$ and $1\in{\rm span}\{Q_1,...,Q_q\}$. Assume that $\psi_0\in H^s(\T^d,\R)$. 
    %
%
    For any $\epsilon,T>0$ there exists a constant control $u\in \R^{q+2}$ such that the solution $\psi(t;\psi_0,u)$ of \eqref{0.1} is well-defined in $[0,T]$ and
        $$\|\psi(T;\psi_0,u)\|_{H^s}<\epsilon.$$
\end{corollary}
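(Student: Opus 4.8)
The plan is to realize a large constant dissipative potential and to exploit the resulting exponential damping together with the smoothing of the free heat semigroup. Since $1\in{\rm span}\{Q_1,\dots,Q_q\}$, I fix coefficients $a_1,\dots,a_q\in\R$ with $\sum_j a_jQ_j\equiv 1$ and, for a parameter $\lambda>0$ to be chosen, take the constant control $u=(-\lambda a_1,\dots,-\lambda a_q,0,0)\in\R^{q+2}$, so that $\lag u,Q\rag\equiv-\lambda$ on $\T^d$. With this choice equation \eqref{0.1} becomes $\p_t\psi=\Delta\psi-\kappa\psi^{p+1}-\lambda\psi$, i.e.\ the nonlinear heat equation with a strong linear damping of strength $\lambda$.

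Next I would remove the damping by the substitution $\psi(t)=e^{-\lambda t}\tilde\psi(t)$. A direct computation shows that $\tilde\psi$ solves $\p_t\tilde\psi=\Delta\tilde\psi-\kappa e^{-p\lambda t}\tilde\psi^{p+1}$ with $\tilde\psi(0)=\psi_0$, that is, the same parabolic equation but with the nonlinear coefficient multiplied by the decaying factor $e^{-p\lambda t}$. Because $e^{-\lambda t}$ is bounded above and below on $[0,T]$, the mild solutions of the two equations correspond and their maximal existence times coincide; in particular, by the blow-up alternative of Proposition \ref{well-pos}, $\psi$ is defined on $[0,T]$ as soon as $\|\tilde\psi\|_{H^s}$ stays bounded there.

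The core estimate compares $\tilde\psi$ with the free heat flow $h(t)=e^{t\Delta}\psi_0$. Since $e^{t\Delta}$ acts on Fourier modes by $e^{-t|k|^2}\le1$, it is a contraction on $H^s$, so $M_0:=\sup_{[0,T]}\|h\|_{H^s}\le\|\psi_0\|_{H^s}$. Writing $w=\tilde\psi-h$ in mild form and using that $H^s(\T^d)$ is a Banach algebra for $s>d/2$, I would bound, on any subinterval where $\|\tilde\psi\|_{H^s}\le 2M_0$,
$$\|w(t)\|_{H^s}\le|\kappa|\,C_s^{p}\int_0^t e^{-p\lambda\sigma}\|\tilde\psi(\sigma)\|_{H^s}^{p+1}\,\dd\sigma\le\frac{|\kappa|\,C_s^{p}(2M_0)^{p+1}}{p\lambda},$$
which is $<M_0$ once $\lambda$ is large. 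A continuity (bootstrap) argument then shows that the bound $\|\tilde\psi\|_{H^s}\le 2M_0$ is never saturated on $[0,T]$, whence $\tilde\psi$, and therefore $\psi$, is defined on all of $[0,T]$ with $\|\tilde\psi\|_{C([0,T],H^s)}\le 2M_0$. The borderline case $p=0$ gives a linear equation, $\psi(t)=e^{-(\kappa+\lambda)t}e^{t\Delta}\psi_0$, and is treated directly; the case $\psi_0=0$ is trivial.

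Finally, returning to the original variable, $\psi(T)=e^{-\lambda T}\tilde\psi(T)$ yields $\|\psi(T)\|_{H^s}\le 2\|\psi_0\|_{H^s}\,e^{-\lambda T}$, so it suffices to fix $\lambda$ large enough to meet both the smallness requirement of the core estimate and $2\|\psi_0\|_{H^s}e^{-\lambda T}<\epsilon$; the corresponding constant $u$ is the desired control. The main obstacle is the third step: establishing existence on the full (possibly large) interval $[0,T]$ with a norm bound independent of $\lambda$, since for general $\kappa$ and $p$ equation \eqref{0.1} may blow up in finite time — the point is precisely that the large damping, equivalently the decaying factor $e^{-p\lambda t}$, makes the time-integral of the nonlinear forcing small and thereby closes the continuation argument. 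This dissipative mechanism is the same one underlying Proposition \ref{P:1.2} with $\varphi=0$, whose limit $e^{\lag u,Q\rag}\psi_0=e^{-\lambda}\psi_0$ is small for large $\lambda$.
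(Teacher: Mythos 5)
Your proof is correct, but it follows a genuinely different route from the paper's. The paper reuses Proposition \ref{P:1.2} with $\varphi=0$: it picks $c>0$ with $e^{-c}\|\psi_0\|_{H^s}$ small, applies the conjugated-dynamics limit to realize multiplication by $e^{-c}$ approximately in an arbitrarily short time $\delta$ with the large constant control $\hat u/\delta$, and then switches the control off and lets the solution relax on $[\delta,T]$, invoking the continuous dependence and stability statements of Proposition \ref{well-pos} around the stationary solution $0$ to propagate the smallness up to time $T$. The resulting control is therefore \emph{piecewise} constant (large on $[0,\delta]$, zero afterwards). You instead keep a single constant control $\lag u,Q\rag\equiv-\lambda$ on all of $[0,T]$, gauge away the damping by $\psi=e^{-\lambda t}\tilde\psi$, and close a bootstrap for $\tilde\psi$ against the free heat flow using the algebra property of $H^s(\T^d)$ for $s>d/2$ and the uniform smallness of $\int_0^T e^{-p\lambda\sigma}\,\dd\sigma\le 1/(p\lambda)$; the $p=0$ and $\psi_0=0$ cases are correctly set aside. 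Your approach buys two things: it is self-contained (no appeal to the limit of Proposition \ref{P:1.2} nor to the stability estimates near $0$), and it produces a control that is literally constant on $[0,T]$, which matches the statement of the corollary more faithfully than the paper's own construction. What it costs is that you must prove global existence on the possibly large interval $[0,T]$ by hand, which you correctly identify as the crux and resolve via the decaying prefactor on the nonlinearity; the paper avoids this by working only on the short interval $[0,\delta]$ and then riding the free flow near the equilibrium $0$, where well-posedness on $[\delta,T]$ follows from Proposition \ref{well-pos}.
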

\begin{proof}
For any $\tilde\epsilon>0$, consider $c>0$ sufficiently large so that 
$$e^{-c}<\frac{\tilde\epsilon}{2\|\psi_0\|_{H^s}} \ \ \ \ \Longrightarrow \ \ \ \ \ \|e^{-c}\psi_0\|_{H^s}<\frac{\tilde\epsilon}{2}.$$
Let $-c=\sum_{j=1}^q u_jQ_j$ for some $(u_1,\dots,u_q)\in \R^q$. Thanks to Proposition \ref{P:1.2} (for $\varphi=0$), there exists $\delta>0$ such that  the constant control $ \hat{u}= (u_1,\dots,u_q,0,0)$ is such that the solution of \eqref{0.1} is well-defined in $[0, \delta ]$ and
$$\|\psi( \delta ;\psi_0,\delta^{-1}\hat{u} )-e^{-c}\psi_0\|_{H^s}<\tilde\epsilon/2.$$
%
Set $v=\hat{u}/\delta$. Thanks to the triangular inequality we get
$$\|\psi( \delta ;\psi_0,v)\|_{H^s}\leq \|\psi( \delta ;\psi_0,v)-e^{-c}\psi_0\|_{H^s}+\|e^{-c}\psi_0\|_{H^s}<\tilde\epsilon.$$
Now, we observe that $0$ is the stationary solution of \eqref{0.1} with control $u=0$. Hence, for $\tilde\epsilon$ sufficiently small, by applying the second item of Proposition \ref{well-pos}, we deduce that $\psi( \cdot ;\psi( \delta ;\psi_0,v), 0)$ and $\psi( \cdot ;0, 0)$ are defined in the same time interval $[\delta,T]$. Furthermore, the first item of Proposition \ref{well-pos} yields the existence of $C>0$, independent of $\psi( \delta ;\psi_0,v)$, such that
$$\|\psi( T ;\psi( \delta ;\psi_0,v), 0)-\psi( T ;0, 0)\|_{H^s}\leq C \|\psi( \delta ;\psi_0,v)\|_{H^s}<C \tilde\epsilon.$$
Thus, for every $\epsilon>0$, it is sufficient to choose $\tilde\epsilon= \frac{\epsilon}{C}$ to conclude the proof.
The control built to achieve the result is defined as
\begin{equation*}
    u(t)=\begin{cases}
        v=\frac{1}{\delta}(u_1,\dots,u_q,0,0) & t\in[0,\delta]\\
        \overline 0 = (0,\dots,0) & t\in[\delta,T].
    \end{cases}
\end{equation*}
\end{proof}

\section{Small-time approximate controllability}\label{SectA}
The aim of this section is to prove the small-time approximate controllability results stated in Main Theorem A.

\subsection{An intermediate controllability result}

Given $Q_1,\dots,Q_q\in C^\infty(\T^d,\R), q\in \N^*$, we introduce the vector space
\begin{equation*}
    \begin{split}\label{H0}
        \mathcal{H}_0={\rm span}_{\R}\{Q_1,\dots,Q_q\}.
    \end{split}
\end{equation*}
We define $\mathcal{H}_j$, for every $j\in\N^*$, as the largest vector space whose elements $\psi$ can be written in the form
\begin{equation}
    \begin{split}\label{Hj}
        \psi=\varphi_0+\sum_{k=1}^n \B(\varphi_k),\qquad \varphi_0,\dots,\varphi_n\in\mathcal{H}_{j-1}, \quad n\in \N,
    \end{split}
\end{equation} 
where the non-linear operator $\B$ is defined in \eqref{eq:saturation-operator}. It is intended that the sum over an empty set of indices, i.e. for $n=0$, is equal to zero.
We also denote 
\begin{equation}\label{H-inf}
\mathcal{H}_\infty=\bigcup_{j=0}^\infty \mathcal{H}_j.
\end{equation}
Let us recall the following result from \cite{DucNer}.

\begin{proposition}{\cite[Proposition 2.6]{DucNer}}\label{P:density}   
Assume that 
$$\{Q_1,\dots,Q_q\}=\{1,\sin\langle k,x\rangle),\cos\langle k,x\rangle\}_{k\in L},$$
for some $L\subset \Z^d$. Then, $\mathcal{H}_\infty$, defined as in \eqref{Hj}-\eqref{H-inf}, is dense in $H^s(\T^d,\R),s\geq 0$, if and only if 
\begin{itemize}
    \item $L$ is a generator,
    \item for any $l,m\in L$, there exists a family $\{n_j\}_{j=1}^\sigma\subset L$ such that $l\not\perp n_1,n_j\not\perp n_{j+1},j=1,\dots,\sigma-1,$ and $ n_\sigma\not\perp m$.
\end{itemize}
\end{proposition}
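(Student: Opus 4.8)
The plan is to prove both directions by bookkeeping of pure frequencies. Since finite trigonometric sums are dense in $H^s(\T^d,\R)$, and since $\mathcal{H}_0$ is spanned by the $\cos\langle k,x\rangle,\sin\langle k,x\rangle$ with $k\in L\cup\{0\}$ while $\B$ and its polarization map trigonometric polynomials to trigonometric polynomials, one checks by induction that $\mathcal{H}_\infty=\mathrm{span}\{\cos\langle n,x\rangle,\sin\langle n,x\rangle:\ n\in\mathcal F\}$, where
\[
\mathcal F:=\{n\in\Z^d:\ \cos\langle n,x\rangle,\ \sin\langle n,x\rangle\in\mathcal{H}_\infty\}.
\]
Thus density of $\mathcal{H}_\infty$ is equivalent to $\mathcal F=\Z^d$, and the whole statement reduces to showing that the two conditions hold iff $\mathcal F=\Z^d$. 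The engine is the action of $\B$ after polarization: each $\mathcal{H}_j$ is a vector space containing $\B(\alpha),\B(\beta),\B(\alpha+\beta)$ for $\alpha,\beta\in\mathcal{H}_{j-1}$, hence it contains $\nabla\alpha\cdot\nabla\beta=\tfrac12(\B(\alpha+\beta)-\B(\alpha)-\B(\beta))$. Evaluating on the basis gives, for $k,m\in\mathcal F$,
\[
\nabla\cos\langle k,x\rangle\cdot\nabla\cos\langle m,x\rangle=\tfrac{\langle k,m\rangle}{2}\big(\cos\langle k-m,x\rangle-\cos\langle k+m,x\rangle\big),
\]
together with the three analogous identities for the other products of $\cos$ and $\sin$. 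Taking suitable linear combinations yields the \emph{product lemma}: if $k,m\in\mathcal F$ and $\langle k,m\rangle\neq0$, then $k+m,\,k-m\in\mathcal F$ (in particular $2k\in\mathcal F$). So $\mathcal F$ is symmetric, contains $L$ and $\Z k$ for every $k\in L$, and is stable under $(k,m)\mapsto k\pm m$ across non-orthogonal pairs.

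Necessity is then immediate. Every operation keeps frequencies inside the additive group $\langle L\rangle_{\Z}$, so $\mathcal F\subseteq\langle L\rangle_{\Z}$; if $L$ is not a generator this is a proper subgroup of $\Z^d$ and $\mathcal{H}_\infty$ omits infinitely many frequencies, so it is not dense. Similarly, if the non-orthogonality graph on $L$ is disconnected, then $L=A\sqcup B$ with $\mathrm{span}_\R A\perp\mathrm{span}_\R B$; since any product coupling an $A$-frequency to a $B$-frequency carries the vanishing factor $\langle a,b\rangle=0$, the reachable frequencies never leave $(\mathrm{span}_\R A\cup\mathrm{span}_\R B)\cap\Z^d$ and cannot produce $a+b$, so $\mathcal{H}_\infty$ is again not dense.

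For sufficiency I would prove that $\mathcal F$ is closed under addition: since $\mathcal F\supseteq L$ with $\langle L\rangle_{\Z}=\Z^d$, writing each $n\in\Z^d$ as a $\pm$-word in $L$ and adding one generator at a time (starting from $0\in\mathcal F$) then gives $\Z^d\subseteq\mathcal F$. The core is the lemma: for $a\in\mathcal F$ and $k\in L$ one has $a\pm k\in\mathcal F$. When $\langle a,k\rangle\neq0$ this is the product lemma. When $a\perp k$, the idea is to route through a connector $n\in\mathcal F$ with $\langle a,n\rangle\neq0$: replacing $n$ by a large multiple $tn$ (legitimate since $\Z n\subseteq\mathcal F$ and $\langle a,tn\rangle\neq0$), one forms $a+tn\in\mathcal F$, then $a+tn\pm k\in\mathcal F$ because $\langle a+tn,k\rangle=t\langle n,k\rangle\neq0$, and finally cancels $tn$ since $\langle a+tn\pm k,tn\rangle\neq0$ for all large $t$. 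The connectivity hypothesis is exactly what guarantees that a usable connector is reachable: it lets one propagate the property \lq\lq non-orthogonal to $a$'' along a path in $L$ from a frequency meeting $a$ to one meeting $k$, correcting each accidental orthogonality by the same multiple trick.

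The hard part is precisely this last combinatorial step. The product lemma only adds frequencies across non-orthogonal pairs, whereas closure under addition is needed for all pairs, so the delicate point is to show that connectivity together with the availability of all integer multiples suffices to bypass every orthogonality obstruction, i.e. to transport summability along chains and to cancel the auxiliary connectors. Checking that at each stage only finitely many \lq\lq bad'' multiples $t$ must be avoided, and that the propagation along the path terminates, is where the real effort lies; by contrast the polarization identities and the necessity direction are routine.
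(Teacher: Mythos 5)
First, a point of reference: the paper does not prove this proposition at all --- it is imported verbatim from \cite[Proposition 2.6]{DucNer} (``Let us recall the following result\dots''), so there is no in-paper argument to compare yours against. Judged on its own merits, your proposal reconstructs the standard saturation argument that the cited reference uses: trigonometric product identities obtained by polarizing $\B$, a product lemma ($k,m$ reachable and $\langle k,m\rangle\neq 0$ imply $k\pm m$ reachable), necessity by exhibiting an invariant proper closed subspace (the closed span of the frequencies in $\langle L\rangle_{\Z}$, resp.\ in $\mathrm{span}_\R A\cup\mathrm{span}_\R B$), and sufficiency by transporting along the non-orthogonality graph using large integer multiples of connectors. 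The architecture is the right one and the necessity half is essentially complete.

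Two steps need more than you give, and one of them is misstated. (a) The polarization step is not automatic from the definition \eqref{Hj}: $\mathcal{H}_j$ is the \emph{largest vector space} contained in the cone $\mathcal{H}_{j-1}+\{\sum_k\B(\varphi_k)\}$, so $v\in\mathcal{H}_j$ requires that both $v$ and $-v$ be expressible with \emph{nonnegatively occurring} $\B$'s. In particular the claim that ``each $\mathcal{H}_j$ contains $\B(\alpha),\B(\beta),\B(\alpha+\beta)$, hence $\nabla\alpha\cdot\nabla\beta$'' is unjustified as stated: for a general $\alpha\in\mathcal{H}_{j-1}$ there is no reason for $-\B(\alpha)$ to lie in the cone. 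For the pure frequencies this is repaired using $1\in\mathcal{H}_0$ together with identities such as $\B(\cos\langle k,\cdot\rangle)+\B(\sin\langle k,\cdot\rangle)=|k|^2$ and $\B(\sin\langle k,\cdot\rangle)-\tfrac{|k|^2}{2}=\tfrac{|k|^2}{2}\cos\langle 2k,\cdot\rangle$, which exhibit $\pm\cos\langle 2k,\cdot\rangle$, $\pm\sin\langle 2k,\cdot\rangle$, and then $\pm\langle k,m\rangle\cos\langle k\pm m,\cdot\rangle$, $\pm\langle k,m\rangle\sin\langle k\pm m,\cdot\rangle$ as admissible expressions $\varphi_0+\sum\B(\varphi_k)$; the constant in $\mathcal{H}_0$ is genuinely load-bearing here, and your write-up skips this entirely. (b) The combinatorial core --- adding $k\in L$ to an arbitrary $a\in\mathcal F$ with $a\perp k$ by walking a path $n_0,\dots,n_\sigma=k$ and then cancelling the connectors --- is only a plan. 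It does close: choosing $t_0,\dots,t_{\sigma-1}$ sequentially, the forward condition $\langle a+\sum_{i<j}t_in_i,n_j\rangle\neq 0$ is affine in $t_{j-1}$ with leading coefficient $\langle n_{j-1},n_j\rangle\neq 0$, and the backward cancellation condition $\langle a+k+\sum_{i\le j}t_in_i,n_j\rangle\neq 0$ is affine in $t_j$ with leading coefficient $|n_j|^2\neq 0$, so each $t_j$ has only finitely many forbidden values; you also need the preliminary observation that some $n_0\in L$ with $\langle a,n_0\rangle\neq 0$ exists, which uses that $L$ spans $\R^d$. Neither issue is a wrong turn --- both are exactly what the cited proof must address --- but (a) needs to be rewritten rather than merely expanded, and (b) needs to be carried out.
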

Observe that, due to Proposition \ref{P:density}, Assumption I guarantees that the choice $L=\mathcal{K}$ (cf. \eqref{ip-trigonotriche}) produces a space $\mathcal{H}_\infty$ that is dense in $H^s(\T^d,\R), s\geq 0$.
We start by ensuring the following property of small-time approximate controllability for \eqref{0.1}. 

\begin{proposition}\label{prop:control}
    Let $s\in\N^*$ be such that $s>d/2$ and $(Q_1,...,Q_q)\in C^\infty(\T^d,\R^q)$ be such that $1\in\mathcal{H}_0$. Assume that $\mathcal{H}_\infty$ is dense in $H^s(\T^d,\R)$. Let $\psi_0\in H^s(\T^d,\R)$ and $\varphi\in H^s(\T^d,\R)$. For any $\epsilon,T>0$, there exist $\tau\in[0,T)$ and $(u_1,...,u_q)\in L^2((0,\tau),\R^q)$  such that the solution $\psi(t;\psi_0,u)$ of \eqref{0.1} with control $u=(u_1,...,u_q,0,0)$ is well-defined in $[0,\tau]$ and
    $$\|\psi(\tau;\psi_0,u)-e^\varphi\psi_0\|_{H^s}<\epsilon.$$
\end{proposition}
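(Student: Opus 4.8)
The plan is to realize, in arbitrarily small time and up to arbitrarily small $H^s$-error, the multiplication map $\psi_0\mapsto e^\varphi\psi_0$, by using the conjugated-dynamics limit of Proposition \ref{P:1.2} as an elementary building block and then iterating it along the saturation hierarchy $\mathcal{H}_0\subset\mathcal{H}_1\subset\cdots$. The first reduction is to replace the general target $\varphi\in H^s$ by an element of $\mathcal{H}_\infty$: since $s>d/2$, the space $H^s(\T^d,\R)$ is a Banach algebra and the map $\zeta\mapsto e^\zeta\psi_0$ is continuous from $H^s$ to $H^s$, so density of $\mathcal{H}_\infty$ (assumed here) yields, for any $\epsilon>0$, some $\eta\in\mathcal{H}_\infty$ with $\|e^\eta\psi_0-e^\varphi\psi_0\|_{H^s}<\epsilon/2$. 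It then suffices to prove that for every $\eta\in\mathcal{H}_\infty$ the map $\psi_0\mapsto e^\eta\psi_0$ is small-time realizable, i.e. approximable within any prescribed error on an arbitrarily short interval $[0,\tau]$ with $\tau<T$, by a solution of \eqref{0.1} driven by a control $u=(u_1,\dots,u_q,0,0)$.

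I would organize the core as an induction on the level $j$ with $\eta\in\mathcal{H}_j$, relying on two soft stability properties of the class of small-time realizable maps. First, this class is stable under composition: one concatenates the two controls (their horizons adding up to less than $T$) and uses the Lipschitz dependence \eqref{psi-phi} of Proposition \ref{well-pos}, together with continuity of the second map, to propagate the intermediate error. Second, since every map we build is multiplication by a function and multiplication operators commute, realizing $e^{a+b}$ reduces to composing the realizations of $e^a$ and $e^b$. The base case $j=0$ is exactly Proposition \ref{P:1.2} with $\varphi=0$ (as already used in Corollary \ref{coro_null}): for $\eta=\langle u,Q\rangle\in\mathcal{H}_0$ the solution $\psi(\delta;\psi_0,\delta^{-1}\hat u)$ with $\hat u=(u_1,\dots,u_q,0,0)$ converges in $H^s$ to $e^{\eta}\psi_0$ as $\delta\to0^+$, so $\tau=\delta<T$ and a constant control suffice.

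For the inductive step I assume realizability of $\psi_0\mapsto e^\zeta\psi_0$ for all $\zeta\in\mathcal{H}_{j-1}$ and take $\eta=\varphi_0+\sum_{k=1}^n\B(\varphi_k)$ with $\varphi_0,\dots,\varphi_n\in\mathcal{H}_{j-1}$ (all of class $C^\infty$, hence in $H^{2s+1}$, since $\mathcal{H}_\infty$ is generated from the smooth potentials $Q_j$ by linear combinations and applications of $\B$). Writing $e^\eta=e^{\varphi_0}\prod_{k=1}^n e^{\B(\varphi_k)}$ and using the composition and commutation properties, it is enough to realize each factor. The factor $e^{\varphi_0}$ is handled by the induction hypothesis. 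For a factor $e^{\B(\varphi_k)}$, I first shift $\varphi_k$ to a strictly positive $\varphi_k^+:=\varphi_k+c_k$ with $c_k$ a large enough constant; since $1\in\mathcal{H}_0\subset\mathcal{H}_{j-1}$ we have $\varphi_k^+\in\mathcal{H}_{j-1}$, and since $\B$ annihilates constants, $\B(\varphi_k^+)=\B(\varphi_k)$. Applying Proposition \ref{P:1.2} with this non-negative $\varphi_k^+$ and $u=0$ shows that the composition
\[
\chi\;\longmapsto\; e^{\delta^{-1/2}\varphi_k^+}\,\psi\!\left(\delta;\,e^{-\delta^{-1/2}\varphi_k^+}\chi,\,0\right)
\]
converges in $H^s$ to $e^{\B(\varphi_k)}\chi$ as $\delta\to0^+$. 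Each of the three operations on the right—multiplication by $e^{-\delta^{-1/2}\varphi_k^+}$, the uncontrolled evolution of \eqref{0.1} over $[0,\delta]$, and multiplication by $e^{\delta^{-1/2}\varphi_k^+}$—is itself small-time realizable: the two conjugations by the induction hypothesis (as $\mp\delta^{-1/2}\varphi_k^+\in\mathcal{H}_{j-1}$), and the middle one by taking zero control on an interval of length $\delta$. Composing them and letting $\delta\to0^+$ realizes $e^{\B(\varphi_k)}$, and hence $e^\eta$.

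I expect the delicate point to be precisely this inductive $\B$-step, for a quantitative reason: the conjugation functions $\pm\delta^{-1/2}\varphi_k^+$, and thus the multipliers $e^{\pm\delta^{-1/2}\varphi_k^+}$, blow up as $\delta\to0^+$, so the errors cannot be chosen uniformly in $\delta$. The limits must be taken in the right order: first fix $\delta$ small, so that Proposition \ref{P:1.2} places the ideal composition within $\epsilon/2$ of $e^{\B(\varphi_k)}\chi$, and only then—with $\delta$, and hence the (large but finite) operator norms of the conjugations and the Lipschitz constant of the time-$\delta$ flow, all frozen—choose the accuracies of the induction-hypothesis realizations small enough that the propagated error stays below $\epsilon/2$, via \eqref{psi-phi}. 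Well-posedness on the short intervals is not an issue: Proposition \ref{P:1.2} guarantees the conjugated flow is defined on $[0,\delta]$ for small $\delta$, the induction-hypothesis realizations are genuine solutions on their own short intervals, and finitely many concatenations keep the solution defined on $[0,\tau]$ with $\tau<T$. Feeding the resulting realization of $e^\eta\psi_0$ back into the first reduction, via the triangle inequality with the $\epsilon/2$ density estimate, completes the proof.
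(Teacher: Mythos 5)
Your proposal is correct and follows essentially the same route as the paper's proof: reduction by density of $\mathcal{H}_\infty$, induction on the saturation level with Proposition \ref{P:1.2} ($\varphi=0$) as the base case, and in the inductive step the constant shift to a non-negative $\tilde\varphi_k$ followed by the sandwich ``conjugation--free evolution--conjugation'' realized via the inductive hypothesis, with errors propagated through \eqref{psi-phi}. You also correctly identify the one delicate point, namely that $\delta$ must be fixed before the accuracies of the inner approximations are chosen, which is exactly how the paper closes the argument.
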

\begin{proof}
Let us recall that the concatenation $v*u$ of two scalar control laws $u:[0,T_1]\to \mathbb{R}^{q+2},\ v:[0,T_2]\to \mathbb{R}^{q+2} $ is defined on $[0,T_1+T_2]$ as follows
$$(v*u)(t)=\begin{cases}
u(t), & t\in[0,T_1],\\
v(t-T_1), & t\in(T_1,T_1+T_2].
\end{cases} $$
Such definition extends to controls with values in $\mathbb{R}^{q}$, componentwise. 
We will often use the fact that
$$\psi(T_1+t;\psi_0,v*u)=\psi(t;\psi(T_1,\psi_0,u),v),\quad t>0. $$
Let us start by assuming that the following property holds for any $n\in\mathbb{N}$:
\begin{itemize}
\item[($P_n$)] for any $\psi_0\in H^s(\mathbb{T}^d,\R)$, $\phi\in\mathcal{H}_n$, and any $\varepsilon,T>0$, there exist $\tau\in[0,T)$ and $(u_1,...,u_q):[0,\tau]\to \mathbb{R}^{q}$ piecewise constant such that the solution of \eqref{0.1} associated with the initial condition $\psi_0$ and the control $u=(u_1,...,u_q,0,0)$ satisfies
 $$\left\|\psi(\tau;\psi_0,u)-e^{\phi}\psi_0\right\|_{H^s(\mathbb{T}^d)}< \varepsilon. $$
\end{itemize}

The property $(P_n)$ combined with the density feature implies the statement at once.

\smallskip

We are thus left to prove $(P_n)$. An analogous property appeared in \cite[Theorem 2.2]{DucNer} to study nonlinear Schr\"odinger equations with bilinear controls. We provide the proof for completeness. Let us proceed by induction on the index $n$. \\

\textbf{Inductive basis: $n=0$}\\
If $\phi\in\mathcal{H}_0$, there exists $(u_1,...,u_q)\in\mathbb{R}^{q}$ such that $\phi(x)=\langle u,Q(x)\rangle$ with $u=(u_1,...,u_q,0,0)$. Applying Proposition \ref{P:1.2}, with $\varphi=0$, we deduce that there exists $\tau\in[0,T)$ such that the solution  of \eqref{0.1} associated with the constant control $u^\tau:=u/\tau$ and with the initial condition $\psi_0$ satisfies
$$\left\|\psi(\tau;\psi_0,u^\tau)-e^{\phi}\psi_0\right\|_{H^s(\mathbb{T}^d)}<\varepsilon, $$ 
which proves the desired property.\\

\textbf{Inductive step: $n\Rightarrow n+1$}\\
By assuming that $(P_n)$ holds, we prove $(P_{n+1})$. If $\phi\in\mathcal{H}_{n+1}$, there exist $N\in \mathbb{N}$ and $\phi_0,\dots,\phi_N\in\mathcal{H}_n$ such that
$$\phi=\phi_0+\sum_{j=1}^N \B(\phi_j).$$ 
Consider $\phi_1$ and $c>0$ such that $\tilde \phi_1=\phi_1+c\geq 0$ and note that $\B(\tilde \phi_1)=\B(\phi_1)$. Due to Proposition \ref{P:1.2}, we can find $\delta\in(0,T/3)$ such that
$$\left\|e^{\delta^{-1/2}\tilde \phi_1} \psi(\delta;e^{-\delta^{-1/2}\tilde \phi_1}\psi_0,0)-  e^{\B(\phi_1)}\psi_0\right\|_{H^s}<\varepsilon/2.$$
Since $c\in\mathcal{H}_0$, $\phi_1\in \mathcal{H}_n$, then $\tilde\phi_1\in \mathcal{H}_n$. Thanks to the inductive hypothesis, for any $\varepsilon',T,\delta>0$ there exist $\delta'\in(0,T/3)$ and a piecewise constant control $u^{\delta',\delta}=(u^{\delta',\delta}_1,...,u^{\delta',\delta}_{q},0,0):[0,\delta'] \to \mathbb{R}^{q+2}$ such that 
\begin{equation}\label{eq:first-impulsion}
 \left\|
\psi(\delta';\psi_0,u^{\delta',\delta})-e^{-\delta^{-1/2}\tilde \phi_1}\psi_0\right\|_{H^s}< \varepsilon'. 
\end{equation}
Now, let the dynamics evolve freely in a time interval of length $\delta$, that is, we consider the control $0|_{[0,\delta]}=(0,\dots,0):[0,\delta]\to\mathbb{R}^{q+2}$.
From \eqref{psi-phi} we deduce that there exists $C=C(\delta)$ such that
\begin{multline*}
 \left\|
\psi(\delta'+\delta;\psi_0,0|_{[0,\delta]}*u^{\delta',\delta})-\psi(\delta;e^{-\delta^{-1/2}\tilde \phi_1}\psi_0,0)\right\|_{H^s}
=\left\|\psi(\delta;\psi(\delta';\psi_0,u^{\delta',\delta}),0)-\psi(\delta;e^{-\delta^{-1/2}\tilde \phi_1}\psi_0,0)\right\|_{H^s}< C\varepsilon'. 
\end{multline*}
We use again the inductive hypothesis to deduce that there exist $\delta''\in(0,T/3)$ and a piecewise constant control $u^{\delta'',\delta}=(u^{\delta'',\delta}_1,...,u^{\delta'',\delta}_q,0,0):[0,\delta'']\to \mathbb{R}^{q+2}$ such that
$$\left\|\psi(\delta'';\psi(\delta;e^{-\delta^{-1/2}\tilde \phi_1}\psi_0,0),u^{\delta'',\delta})-e^{\delta^{-1/2}\tilde \phi_1} \psi(\delta;e^{-\delta^{-1/2}\tilde \phi_1}\psi_0,0)\right\|_{H^s}<\varepsilon'. $$
Then, thanks to \eqref{psi-phi}, there exists $C'=C'(\|u^{\delta'',\delta}\|_{L^2},\delta'')$ such that
\begin{equation*}
    \begin{split}
        &\left\|\psi(\delta'+\delta+\delta'';\psi_0,u^{\delta'',\delta}*0|_{[0,\delta]}*u^{\delta',\delta})-e^{\B(\phi_1)}\psi_0\right\|_{H^s}\\
        &\qquad\qquad\qquad\qquad\qquad\leq  \left\| \psi(\delta'';\psi(\delta'+\delta;\psi_0,0|_{[0,\delta]}*u^{\delta',\delta}),u^{\delta'',\delta})-\psi(\delta'';\psi(\delta;e^{-\delta^{-1/2}\tilde \phi_1}\psi_0,0),u^{\delta'',\delta}) \right\|_{H^s}\\
        &\qquad\qquad\qquad\qquad\qquad\qquad\qquad+\left\|\psi(\delta'';\psi(\delta;e^{-\delta^{-1/2}\tilde \phi_1}\psi_0,0),u^{\delta'',\delta})-e^{\delta^{-1/2}\tilde \phi_1} \psi(\delta;e^{-\delta^{-1/2}\tilde \phi_1}\psi_0,0) \right\|_{H^s}\\
        &\qquad\qquad\qquad\qquad\qquad\qquad\qquad+\left\|e^{\delta^{-1/2}\tilde \phi_1} \psi(\delta;e^{-\delta^{-1/2}\tilde \phi_1}\psi_0,0)-e^{\B(\phi_1)}\psi_0 \right\|_{H^s}\\
        &\qquad\qquad\qquad\qquad\qquad\leq C'C\varepsilon'+\varepsilon'+\varepsilon/2.
    \end{split}
\end{equation*}
Choosing $\varepsilon'>0$ small enough such that $C'C\varepsilon'+\varepsilon'<\varepsilon/2$, we have then proved that the piecewise constant control $u^{\delta'',\delta}*0|_{[0,\delta]}*u^{\delta',\delta}$ steers the initial state $\psi_0$ $\varepsilon$-close to the state 
$e^{\B(\phi_1)}\psi_0$ 
in time $\tau:=\delta''+\delta+\delta'<T$.
We can now repeat the same argument for $\phi_2$, reasoning as if we were starting from the initial state $e^{\B(\phi_1)}\psi_0$, and prove that the system can be driven arbitrarily close to the state $e^{\B(\phi_1)+\B(\phi_2)}\psi_0$ in arbitrarily small time, and hence, by iteration, to $e^{\sum_{i=1}^N\B(\phi_i)}\psi_0$. By inductive hypothesis we conclude that there exists a piecewise constant control $u$ leading the state $e^{\sum_{i=1}^N\B(\phi_i)}\psi_0$ arbitrarily close to
$e^{\phi_0+\sum_{i=1}^N\B(\phi_i)}\psi_0$
in arbitrarily small time. This completes the proof of property $(P_n)$.
\end{proof}

 \subsection{Small-time global approximate controllability}\label{main_proofs}

In this section we prove a more general results which in fact has inspired Main Theorem A.

\begin{theorem}\label{mta-gene}

Let $s\in\N^*$, $s>d/2$ and let $(Q_1,...,Q_q)\in C^\infty(\T^d,\R^q)$ be such that $1\in\mathcal{H}_0$ and $\mathcal{H}_\infty$ is dense in $H^s(\T^d,\R)$.

\begin{itemize}
\item[(i)] Let $\psi_0,\psi_1\in H^s(\T^d,\R)$ be such that ${\rm sign}(\psi_0)={\rm sign}(\psi_1)$. 
For any $\epsilon>0$ and $T>0$, there exist $\tau\in (0,T]$ and $(u_1,...,u_q)\in L^2((0,\tau),\R^q)$ for which the solution $\psi(t;\psi_0,u)$ of \eqref{0.1} with control $u=(u_1,...,u_q,0,0)$ is well-defined in $[0,\tau]$ and satisfies
$$
\|\psi(\tau;\psi_0,u)-\psi_1\|_{L^2}<\epsilon.
$$
\item[(ii)] Let $\psi_0,\psi_1\in H^s(\T^d,\R)$ be such that $\psi_0,\psi_1>0$ (or $\psi_0,\psi_1<0$). 
For any $\epsilon>0$ and $T>0$, there exists $(u_1,...,u_q)\in L^2((0,T),\R^q)$ such that the solution $\psi(t;\psi_0,u)$ of \eqref{0.1} with control $u=(u_1,...,u_q,0,0)$ is well-defined in $[0,T]$ and satisfies
$$
\|\psi(T;\psi_0,u)-\psi_1\|_{H^s}<\epsilon.
$$
\end{itemize}
\end{theorem}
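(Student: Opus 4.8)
The common engine behind both items is Proposition \ref{prop:control}: for every $\varphi\in H^s(\T^d,\R)$ one can steer $\psi_0$ arbitrarily close in $H^s$ to the multiplied state $e^{\varphi}\psi_0$, in arbitrarily small time, by admissible controls of the form $(u_1,\dots,u_q,0,0)$ keeping the solution well-defined (the hypotheses $1\in\mathcal H_0$ and density of $\mathcal H_\infty$ required there are exactly those assumed in the theorem). Both statements therefore reduce to producing, from the data $(\psi_0,\psi_1)$, a function $\varphi\in H^s$ for which $e^{\varphi}\psi_0$ is close to $\psi_1$ in the relevant norm, and then transferring closeness through the triangle inequality.

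\emph{Part (ii) (positive states, $H^s$).} Since $s>d/2$, the space $H^s(\T^d,\R)$ embeds into $C^0(\T^d)$ and is a Banach algebra; as $\psi_0,\psi_1$ are continuous and strictly positive on the compact torus, they are bounded below by a positive constant. Hence $\varphi:=\log\psi_1-\log\psi_0$ is well-defined and lies in $H^s$ (composition of an $H^s$ function valued in a compact subset of $(0,\infty)$ with the smooth map $\log$), and satisfies $e^{\varphi}\psi_0=\psi_1$ exactly. Proposition \ref{prop:control} then yields controls driving $\psi_0$ within $\epsilon$ of $\psi_1$ in $H^s$ at some time $\tau<T$. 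The remaining point is to realize the target precisely at the prescribed time $T$: I would concentrate the controlled manoeuvre in a short terminal window and fill the complementary interval either by free evolution or by repeated ``stay-in-place'' applications of Proposition \ref{prop:control} (with target $e^{0}\chi=\chi$), exploiting that the free flow preserves positivity — the reaction term $-\kappa\psi^{p+1}$ vanishes at $0$, so by comparison a positive datum cannot cross the zero solution — together with continuity in time of the flow (Proposition \ref{well-pos}) to keep the $H^s$-error small.

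\emph{Part (i) (same sign, $L^2$).} Now $\psi_0,\psi_1$ may vanish, so $\log(\psi_1/\psi_0)$ need not belong to $H^s$; but only $L^2$-closeness is required. I would build a regularized multiplier: fix a threshold $\rho>0$, set $\varphi:=\log(\psi_1/\psi_0)$ on $\{|\psi_0|>\rho\}$ (real-valued, because $\mathrm{sign}(\psi_0)=\mathrm{sign}(\psi_1)$ makes the ratio positive there) and taper it to a bounded $H^s$ function across the thin set $\{|\psi_0|\le\rho\}$. Then $e^{\varphi}\psi_0=\psi_1$ off $\{|\psi_0|\le\rho\}$, while on that set $e^{\varphi}\psi_0$ is pointwise dominated by $e^{\|\varphi\|_{\infty}}|\psi_0|\le e^{\|\varphi\|_{\infty}}\rho$ and $\psi_1$ is $L^2$-small: since $\mathrm{sign}(\psi_0)=\mathrm{sign}(\psi_1)$ forces $\psi_1=0$ on $\{\psi_0=0\}$, dominated convergence gives $\|\psi_1\|_{L^2(\{|\psi_0|\le\rho\})}\to0$ as $\rho\downarrow0$. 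Choosing $\rho$ small makes $\|e^{\varphi}\psi_0-\psi_1\|_{L^2}<\epsilon/2$; then Proposition \ref{prop:control} reaches $e^{\varphi}\psi_0$ within $\epsilon/2$ in $H^s\hookrightarrow L^2$ at some $\tau\le T$, and the triangle inequality yields $\|\psi(\tau;\psi_0,u)-\psi_1\|_{L^2}<\epsilon$.

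\emph{Main obstacles.} In (i) the delicate step is the explicit $H^s$ construction of the tapered logarithm together with the quantitative $L^2$ control of the defect over the vanishing set $\{|\psi_0|\le\rho\}$. In (ii) the genuinely subtle issue is upgrading ``reach at some small $\tau$'' to ``reach exactly at $T$'' while keeping the trajectory defined on the \emph{entire} interval $[0,T]$; this is precisely where positivity is indispensable, both to form the logarithms and to ensure, via positivity-preservation and the stability estimates of Proposition \ref{well-pos}, that the time-filling phases neither destroy global existence nor spoil the $H^s$ approximation.
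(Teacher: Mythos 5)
Your part (i) follows the paper's argument in all essentials: cut off $\log(\psi_1/\psi_0)$ away from the common zero set, control the $L^2$ defect on the thin exceptional set, approximate the multiplier in $H^s$, and invoke Proposition \ref{prop:control} plus the triangle inequality. One quantitative point needs fixing: your bound $|e^{\varphi}\psi_0|\le e^{\|\varphi\|_\infty}\rho$ on $\{|\psi_0|\le\rho\}$ is not automatically small, because a taper matching the boundary values of $\log(\psi_1/\psi_0)$ can have sup norm of order $\log(1/\rho)$, so $e^{\|\varphi\|_\infty}\rho$ need not tend to $0$. The paper sets the multiplier equal to $0$ on the exceptional set, so the defect there is just $\|\psi_0-\psi_1\|_{L^2(Z_\eta\setminus Z)}\to 0$; tapering to $0$ rather than to an arbitrary bounded extension repairs your estimate.

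The genuine gap is in part (ii), in upgrading ``reach $\psi_1$ approximately at some small, uncontrollable time $\tau$'' to ``reach it at the prescribed time $T$ with the trajectory defined on all of $[0,T]$''. Neither of your proposed fills works as stated. Free evolution on the complementary interval need not be globally defined: the theorem allows any $\kappa\in\R$ and $p\in\N$, and for $\kappa<0$, $p\ge 1$ and a positive datum the spatial mean $m(t)=\int_{\T^d}\psi$ satisfies $m'=-\kappa\int\psi^{p+1}\ge c\,|\kappa|\,m^{p+1}$ by Jensen, hence blows up in finite time, so you cannot park on $[0,T-\tau]$ with the zero control. Repeated ``stay-in-place'' applications of Proposition \ref{prop:control} with $\varphi=0$ do not help either: each application returns some time $\tau_k\in[0,T)$ on which you have no lower bound, so you cannot force the durations to sum to $T-\tau$, and the errors accumulate over an unbounded number of steps. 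This is precisely where the paper exploits $1\in\mathcal{H}_0$ in a way your direct multiplier $\varphi=\log\psi_1-\log\psi_0$ does not permit: it routes the transfer through the constant state (via $\phi_1=\log(\sign(\psi_0)/\psi_0)$ and $\phi_2=\log(\sign(\psi_1)\psi_1)$), which is an exact globally defined stationary solution of \eqref{0.1} for the constant control $u^{stat}$ with $\lag u^{stat},Q\rag=\kappa$; parking near it for the surplus time $T-\tau_1-\tau_2$ is then harmless by the continuous-dependence estimates of Proposition \ref{well-pos}. To close your argument you should likewise pass through such a controlled stationary state rather than rely on the free flow.
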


\begin{proof}

Let us start by proving \emph{(i)}. Denote by $Z$ the set of zeroes of $\psi_j$, for $ j=0,1$, and its complement in $\T^d$, $Z^c$. 
Consider 
for $\eta>0$ 
the set
$$
Z_\eta:=\{x\in \T^d\mid \dist(x,Z)<\eta\}\supset Z,
$$
and its complement in $\T^d$, $Z^c_{\eta}$. 
For $\eta>0$, we have $Z^c_{\eta} \subset Z^c$ and we define $$\phi_\eta:=\rho_\eta\log(\psi_1/\psi_0),$$ where $\rho^\eta=1$ in $Z^c_{\eta}$ and $0$ elsewhere.
Notice that $\phi_\eta$ is well-defined because $\psi_1/\psi_0>0$ on $Z^c_\eta$, and $\phi_\eta\in L^2(\T^d)$.
We have
$$\|e^{\phi_\eta}\psi_0-\psi_1\|_{L^2(\T^d)}\leq \|e^{\phi_\eta}\psi_0-\psi_1\|_{L^2(Z^c_\eta)} + \|\psi_0-\psi_1\|_{L^2(Z_\eta)}=\|\psi_0-\psi_1\|_{L^2(Z_\eta\setminus Z)}.$$
Hence, fixed any $\varepsilon,T>0$, we can choose $\eta>0$ small enough such that 
$$
\|e^{\phi_\eta}\psi_0-\psi_1\|_{L^2(\T^d)}<\varepsilon/3.
$$
Now, by the density of $H^1(\T^d)$ into $L^2(\T^d)$, there exists $\widetilde \phi_\eta\in H^1(\T^d)$ such that
$$
\|e^{\widetilde \phi_\eta}\psi_0-\psi_1\|_{L^2(\T^d)}\leq \|e^{\widetilde\phi_\eta}\psi_0-e^{\phi_\eta}\psi_0\|_{L^2(\T^d)}+\|e^{ \phi_\eta}\psi_0-\psi_1\|_{L^2(\T^d)}  <\frac{2\varepsilon}{3}.
$$
Then, we apply Proposition \ref{prop:control} with $\varphi=\widetilde\phi_\eta$ and we deduce that there exist a time $\tau\in[0,T)$ and a control $u=(u_1,...,u_q,0,0)\in L^2((0,\tau),\R^{q+2})$ such that the solution $\psi(t;\psi_0,u)$ of \eqref{0.1} is well-defined in $[0,\tau]$ and 
$$
\|\psi(\tau;\psi_0,u)-e^\varphi\psi_0\|_{L^2}<\frac{\varepsilon}{3}.
$$
By the triangular inequality we conclude that
$$
\|\psi(\tau;\psi_0,u)-\psi_1\|_{L^2}\leq \|\psi(\tau;\psi_0,u)-e^\varphi\psi_0\|_{L^2}+\|e^\varphi\psi_0-\psi_1\|_{L^2}<\varepsilon,
$$
and the first item of the Proposition is then proved.

\smallskip

The proof of \emph{(ii)} follows from the same strategy used to prove \emph{(i)}. However, we now consider the $H^s$-norm instead of the $L^2$-norm. 
In this case, we substitute $\phi_\eta$ with two functions $\phi_1 =\log(\sign(\psi_0)/\psi_0)$ and $\phi_2=\log(\sign(\psi_1)\psi_1)$, which are well-defined everywhere in $\T^d$. Note that, according to our assumptions, $\phi_1,\phi_2\in H^s(\T^d,\R)$.
Proposition \ref{prop:control} with $\varphi=\phi_1$ yields the existence of a control $u^1=(u^1_1,...,u^1_q,0,0):[0,\tau_1)\rightarrow\R^{q+2}$ that steers $\psi_0$ close to the constant state $1$ in time $\tau_1\leq T/2$. We apply again Proposition \ref{prop:control} with $\varphi=\phi_2$. Thus, we can find a control $u^2=(u^2_1,...,u^2_q,0,0):[0,\tau_2)\rightarrow\R^{q+2}$ such that the solution of \eqref{0.1} starting from a state close to $1$ at time $\tau_2\leq T/2$ reaches a neighbourhood of the final target $\psi_1$.
Hence, the approximate controllability in $H^s$ can be achieved in time $T$ by exploiting that $1$ is a stationary solution of \eqref{0.1} associated with the control $u^{stat}=(u^{stat}_1,...,u^{stat}_q,0,0):[0,T-\tau_1 -\tau_2]\rightarrow\R^{q+2}$, thanks to the assumption that $1\in\mathcal{H}_0$. Therefore, the control will be defined as
$$(u^2* u^{stat}*u^1)(t) = {\mathbf 1}_{[0,\tau_1)} u^1(t) + {\mathbf 1}_{[\tau_1,T-\tau_2]} u^{stat}(t-\tau_1) + {\mathbf 1}_{(T-\tau_2,T]} u^{2}(t-T+\tau_2).$$
\end{proof}

\begin{remark} The reason why the approximate controllability of part (i) of Theorem \ref{mta-gene} (and consequently, of Main Theorem A) is only stated with respect to the $L^2$-norm, while part (ii) is stated with respect to the finer $H^s$-norm, is due to our approximation technique. More precisely, the term $\|e^{\phi_\eta}\psi_0-\psi_1\|_{H^s(Z^c_\eta)}$ (which appears in the proof above) cannot be small as $\eta\to 0$, when $s>0$ and $Z\neq \emptyset$.
\end{remark}

\begin{proof}[Proof of Main Theorem A]
Main Theorem A follows from Theorem \ref{mta-gene}. Indeed, thanks to Proposition \ref{P:density}, the space $\mathcal{H}_\infty$ is dense in $H^s(\T^d,\R)$. Furthermore, the potentials $Q_j$, $j=1,\dots,q$, belong to $H^r(\T^d,\R)$ for every $r>0$. Therefore, the proof is completed.  
\end{proof}


\section{Exact controllability to the ground state solution}\label{local}

We now move to the proof of Main Theorem B. Henceforth, we suppose $d=s=1$. Let the solution of \eqref{0.1} exist for any time $T>0$. For instance, it is enough to require $p\in 2\N$, $\kappa\geq0$, and $\psi_0\in H^3(\T)$, as stated in Proposition \ref{thm-global-well-pos}. Main Theorem B will be a direct consequence of the following more general result.

\begin{theorem}\label{mtb-gene}
Let $\kappa\geq0$ and $p\in 2\N$. Suppose that Assumption II is satisfied and $\mathcal{H}_\infty$ is dense in $H^3(\T,\R)$. Then, \eqref{0.1} is exactly controllable to the ground state solution $\Phi=c_0$, defined in \eqref{eigenf}, in any positive time from any positive state. In other words, for any $T>0$ and
$$ \psi_0 \in \{\psi\in H^3(\T,\R) :\ sign(\psi)>0\},$$
there exists $u\in L^2((0,T),\R^{q+2})$ such that $$\psi(T;  \psi_0 ,u)= \Phi.$$
Analogously, for any $T>0$ and $$ \psi_0 \in \{\psi\in H^3(\T,\R) :\ sign(\psi)<0\},$$ there exists $u\in L^2((0,T),\R^{q+2})$ such that $$\psi(T;  \psi_0 ,u)= -\Phi.$$
\end{theorem}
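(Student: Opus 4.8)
The plan is to combine the small-time global $H^s$-approximate controllability between positive states (Main Theorem A(ii), equivalently Theorem \ref{mta-gene}(ii)) with a local exact controllability result to the ground state $\Phi=c_0$ (Theorem \ref{teo-loc-nlh}, announced earlier in the introduction). The overall architecture is the classical two-step ``approximate-then-exact'' scheme: first drive any positive initial state arbitrarily close to $\Phi$ in the $H^3$-norm using the saturation technique, and then, once we are inside the local exact-controllability neighbourhood of $\Phi$, apply the local result to hit $\Phi$ exactly. A key structural point is that $\Phi=c_0$ is itself a stationary (ground state) solution of \eqref{0.1} when $u=0$ and $\kappa\geq 0$, $p\in 2\N$, since $\Phi$ is a positive constant and $\Delta\Phi=0$, so the nonlinear term $-\kappa\Phi^{p+1}$ can be absorbed by the constant control $Q_1=1\in\mathcal{H}_0$.

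\textbf{Step 1: splitting the time interval and applying the local result.}
Fix $T>0$ and a positive $\psi_0\in H^3(\T,\R)$. Theorem \ref{teo-loc-nlh} provides a radius $\rho>0$ and a time $T_2\in(0,T)$ such that any state in the $H^3$-ball $B_{H^3}(\Phi,\rho)$ can be steered exactly to $\Phi$ in time $T_2$ by a control in $L^2((0,T_2),\R^{q+2})$; this is where Assumption II and the decomposition of the moment problem into the two ``filtered'' subproblems (via $\mu_1$ on the $\{c_k\}$-frequencies and $\mu_2$ on the $\{s_k\}$-frequencies) enter. We reserve the final subinterval of length $T_2$ for this exact phase.

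\textbf{Step 2: approximate phase with $H^3$ precision.}
On the initial subinterval of length $T_1:=T-T_2$ we invoke Theorem \ref{mta-gene}(ii) with target $\psi_1=\Phi$. Since $\psi_0>0$ and $\Phi>0$, and since $\HH_\infty$ is dense in $H^3(\T,\R)$ by hypothesis, for $\epsilon=\rho$ there exists a control $(u_1,\dots,u_q)\in L^2((0,T_1),\R^q)$ (extended by zeros in the $\mu$-components) such that the solution is well-defined on $[0,T_1]$ and satisfies $\|\psi(T_1;\psi_0,u)-\Phi\|_{H^3}<\rho$. Global well-posedness on $[0,T_1]$ is guaranteed by Proposition \ref{thm-global-well-pos} under $\kappa\geq 0$, $p\in 2\N$, $\psi_0\in H^3$, provided the control is in $H^1_{loc}$; one may either arrange the piecewise-constant controls to have the required regularity or regularize them slightly without leaving the $\rho$-neighbourhood. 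We then concatenate the approximate control on $[0,T_1]$ with the local exact control on $[T_1,T]$, using the flow property $\psi(T;\psi_0,v*u)=\psi(T_2;\psi(T_1;\psi_0,u),v)$, to obtain a single control driving $\psi_0$ exactly to $\Phi$ in time $T$. The negative case is identical, replacing $\Phi$ by $-\Phi$ and using that the class of negative states maps to a neighbourhood of $-\Phi$ (and that $p\in 2\N$ makes $-\Phi$ a stationary solution as well).

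\textbf{Main obstacle.}
The delicate point is the \emph{compatibility of the two regularity frameworks} at the junction time $T_1$. The approximate-controllability machinery (Proposition \ref{prop:control}) produces piecewise-constant controls and controls the $H^3$-distance, but the global well-posedness of the nonlinear equation (Proposition \ref{thm-global-well-pos}) demands $u\in H^1_{loc}$, and the local exact controllability result is itself stated for a specific functional setting around $\Phi$. One must check that the intermediate state $\psi(T_1;\psi_0,u)$ genuinely lies in the $H^3$-ball on which Theorem \ref{teo-loc-nlh} applies, and that concatenating the controls does not break well-posedness (no blow-up on $[0,T]$). The continuity estimate \eqref{psi-phi} and item \emph{ii.} of Proposition \ref{well-pos} are the natural tools to absorb the small regularization of the control while keeping the endpoint within the exact-controllability neighbourhood; managing this interface cleanly—rather than any single estimate—is the crux of the argument.
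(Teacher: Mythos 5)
Your proposal is correct and follows essentially the same route as the paper: combine Theorem \ref{mta-gene} with $s=3$ to steer any positive $H^3$ state into the local exact-controllability neighbourhood of $\Phi$, then conclude with Theorem \ref{teo-loc-nlh}. The only minor over-caution is your worry about regularizing the piecewise-constant controls: Theorem \ref{mta-gene}(ii) already asserts well-posedness of the solution on $[0,T_1]$ for the $L^2$ control it constructs, and since the target neighbourhood in Theorem \ref{teo-loc-nlh} is measured in the $H^1$-norm (which the $H^3$-closeness dominates), the concatenation goes through directly.
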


\begin{example}\label{exemple}
Examples of suitable functions $\mu_1$ and $\mu_2$ satisfying hypotheses \eqref{hp_intro} in Assumption II are
$$\mu_1=x^3(2\pi-x)^3, \qquad \mu_2=x^3(x-\pi)^3(x-2\pi)^3.$$
Indeed, both functions belong to $H^3(\T,\R)$, $\mu_1$ is symmetric with respect to $x=\pi$ and $\mu_2$ is antisymmetric. Thus, $\lag\mu_1,c_0\rag_{L^2}\neq 0 $ and $\lag\mu_2,c_0\rag_{L^2}=0 $. Moreover,\begin{equation*}\begin{split}
 \lag \mu_1 , s_{k}\rag_{L^2}=0,\qquad \lag \mu_2 , c_{k}\rag_{L^2}=0.\\
\end{split}\end{equation*}
By further computations, the remaining hypotheses in \eqref{hp_intro} are satisfied since
 \begin{equation*}\begin{split}
  \lag \mu_1, c_{k}\rag_{L^2} =\frac{96\pi(k^2\pi^2-15) }{ k^6},\qquad   \lag \mu_2, s_{k}\rag_{L^2} =\frac{-864 \pi (840-105 k^2 \pi^2+2 k^4 \pi^4)}{k^9}.\\
\end{split}\end{equation*}
\end{example}

Before proving Theorem \ref{mtb-gene}, and consecutively Main Theorem B, we need to ensure the following local exact controllability result to the ground state solution $\Phi$.

\begin{theorem}\label{teo-loc-nlh}
Let $\kappa\geq0$ and $p\in 2\N$. Suppose that Assumption II is satisfied. Then, \eqref{0.1} is locally exactly controllable to the ground state solution $\Phi=c_0$, defined in \eqref{eigenf}, in any positive time. In other words, for any $T>0$ there exists $R_T>0$ such that, for any 
$$\psi_0\in \{\psi\in H^3(\T,\R): \|\psi-\Phi\|_{H^1}<R_T\},$$
there exists $(u_1,u_2)\in H^1((0,T),\R^2)$ such that $\psi(T; \psi_0,u)=\Phi$, where $u = (\frac{\kappa}{\Phi^p},0,...,0,u_1,u_2)$.  Furthermore,
\begin{equation}\label{bound-control}
\norm{u}_{H^1(0,T)}\leq \frac{e^{-\pi^2\Gamma_0/T}}{e^{2\pi^2\Gamma_0/(3T)}-1},
\end{equation}
where
\begin{equation}\label{Gamma_0}
    \Gamma_0:=2\nu+(\max\{\ln\gamma_1,0\}+\max\{\ln C^2_Q,0\}+\gamma_2+\ln 8)/2,
\end{equation}
with
\begin{equation*}\label{gamma_1-gamma_2}
    \gamma_1:=2\kappa(p+1)^2\sum_{j=2}^{p+1}\binom{p+1}{j}\Phi^{p+1-j},\qquad\gamma_2:=2\kappa(p+1)\Phi^p+\sum_{j=2}^{p+1}\binom{p+1}{j}\Phi^{p+1-j}+1,
\end{equation*}
and 
\begin{equation}\label{R_T}
    R_T=e^{-6\Gamma_0/T_1},
\end{equation}
with
\begin{equation*}\label{T_1}
    T_1:=\min\Big\{\frac{6}{\pi^2}T,1,T_0\Big\}.
\end{equation*}
The constant $T_0$ is defined in \eqref{estim-control-time} and $C_Q$ in \eqref{c_Q}.
\end{theorem}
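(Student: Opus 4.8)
The plan is to follow the linearization-plus-fixed-point strategy of \cite{acu,acue}, adapted to the torus through the spectral filtering built into Assumption II. First I would translate the problem to the origin: writing $\psi=\Phi+z$ and $u=\bar u+(0,\dots,0,u_1,u_2)$, where $\bar u$ is the constant reference control from the statement for which $\Phi$ solves \eqref{0.1} stationarily, the perturbation $z$ obeys
\begin{equation*}
\p_t z = A z + \Phi(u_1\mu_1+u_2\mu_2) + N(z,u_1,u_2),\qquad z(0)=\psi_0-\Phi,
\end{equation*}
with $A=\Delta-\kappa p\,\Phi^p$ (a constant shift of the Laplacian, hence sharing its eigenbasis) and
\begin{equation*}
N(z,u_1,u_2) = -\kappa\sum_{j=2}^{p+1}\binom{p+1}{j}\Phi^{p+1-j}z^j + (u_1\mu_1+u_2\mu_2)z
\end{equation*}
collecting the superlinear and bilinear remainders. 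Exact controllability to $\Phi$ is then equivalent to null-controllability of $z$, and the coefficients of $N$ are precisely those entering the constants $\gamma_1,\gamma_2$ of the statement.

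Next I would settle the linearized system ($N\equiv 0$, keeping a source $g$ to be used later in the iteration) by reducing $z(T)=0$ to a moment problem. Expanding on the eigenbasis $\{c_0,c_k,s_k\}$ and writing $\mu_k:=\lambda_k+\kappa p\Phi^p=k^2+\kappa p\Phi^p$ for the eigenvalues of $-A$, the condition $z(T)=0$ reads, component by component,
\begin{equation*}
\Phi\lag\mu_i,e_k\rag\int_0^T e^{-\mu_k(T-t)}u_i(t)\,dt = -e^{-\mu_k T}\lag z_0,e_k\rag + (\text{contribution of }g),
\end{equation*}
where $e_k$ runs over $\{c_k\}$ for $i=1$ and over $\{s_k\}$ for $i=2$. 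Here Assumption II is decisive: since $\mu_1$ is orthogonal to every $s_k$ and $\mu_2$ to every $c_k$, the coupled system splits into two \emph{independent} scalar moment problems, one for $u_1$ on the cosine modes and one for $u_2$ on the sine modes, each indexed by the distinct exponents $\{\mu_k\}$, so the obstruction of the double eigenvalues of $-\Delta$ disappears. Each scalar problem is solved with a biorthogonal family $\{\sigma_k\}\subset L^2(0,T)$ to $\{e^{-\mu_k t}\}$, whose existence and quantitative bounds follow from the classical Fattorini--Russell theory (the condition $\sum_k\mu_k^{-1}<\infty$ holds since $\mu_k\sim k^2$). The lower bounds $\lambda_k^{q_i}|\lag\mu_i,e_k\rag|\ge b_i$ turn the denominators into at most polynomial amplifications, which the decay $e^{-\mu_k T}$ and the $H^3$-decay of $\lag z_0,e_k\rag$ absorb; summing $\sum_k|\text{moment}_k|\,\|\sigma_k\|_{H^1}$ yields $(u_1,u_2)\in H^1((0,T),\R^2)$ together with the explicit bound \eqref{bound-control}, whose constant $\Gamma_0$ packages the biorthogonal-family estimate $\nu$ and the nonlinearity constants $\gamma_1,\gamma_2$. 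This is where the technical estimate of Appendix~\ref{AppC} enters.

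Finally I would upgrade to the nonlinear equation by a fixed point on a small ball. Rewriting $z(T)=0$ as
\begin{equation*}
L(u_1,u_2) = -e^{TA}z_0-\int_0^T e^{(T-t)A}N\big(z(t),u_1(t),u_2(t)\big)\,dt,
\end{equation*}
with $L$ the (right-invertible) linear control-to-state map, I would define the map sending a trajectory--control pair to the one produced by (a) solving the moment problem with right-hand side evaluated along the current pair, and (b) solving \eqref{0.1} with the resulting control. Global existence, uniqueness and the Lipschitz estimate \eqref{psi-phi} that make this map well-defined on all of $[0,T]$ are supplied by Propositions \ref{well-pos} and \ref{thm-global-well-pos}, precisely because $\kappa\ge0$ and $p\in2\N$ give a dissipative nonlinearity with global $H^3$-bounds and because $u\in H^1$. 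Choosing the radius $R_T$ as in \eqref{R_T} makes $N$ quadratically small, so the map is a contraction whose fixed point furnishes the control steering $\psi_0$ exactly to $\Phi$ in time $T$; the case $\psi_0<0$ follows by the symmetry $\psi\mapsto-\psi$ of \eqref{0.1} (valid since $p$ is even).

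I expect the main obstacle to be closing this last step quantitatively rather than the (now decoupled) moment problem itself: the control carries the $e^{-\pi^2\Gamma_0/T}$-type cost, the source $N$ must be controlled in the $H^1$-topology fed back into the $H^1$-valued moment problem, and the contraction constant must be forced below $1$ uniformly by shrinking $R_T$, all while staying inside the global well-posedness regime. Balancing these competing estimates is the crux of the argument.
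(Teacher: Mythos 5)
Your linear step is faithful to the paper: the translation $y=\psi-\Phi$, the decoupling of the moment problem into cosine and sine families via Assumption II, and the resolution by biorthogonal families with cost $N(\tau)\le e^{\nu/\tau}$ are exactly Proposition \ref{prop-null}. The divergence --- and the gap --- is in how you pass to the nonlinear equation. The paper does \emph{not} run a single fixed point on $[0,T]$ with the nonlinearity fed back as a source into the moment problem. It splits $[0,T_f]$ into intervals of length $T_j=T_1/j^2$, on each interval solves the moment problem \emph{only for the current initial datum} $y_{n-1}$ (no source term), and then shows via the Gr\"onwall estimate of Appendix \ref{AppC} that the nonlinear trajectory ends the interval at distance $\le K(T_j)\norm{y_{n-1}}^2_{H^1}$ from zero. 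The quadratic gain per step beats the blow-up $e^{\nu j^2/T_1}$ of the control cost precisely when $\norm{y_0}_{H^1}<R_T=e^{-6\Gamma_0/T_1}$, and the identities $\sum_{j\ge 1}j^2/2^j=6$ and the geometric summation over steps are what produce the explicit constants $\Gamma_0$, $R_T$ and the bound \eqref{bound-control}. Your single-shot scheme would not reproduce these constants, which are part of the statement.

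More seriously, the step where you ``solve the moment problem with right-hand side evaluated along the current pair'' does not close as written. For the initial-datum moments the factor $e^{-\mu_k T}$ absorbs the biorthogonal growth $\norm{\sigma_k}\le Ce^{C\sqrt{\mu_k}+C/T}$; but the moments generated by the source $N(z,u_1,u_2)$, namely $\int_0^T e^{-\mu_k(T-t)}\lag N(t),e_k\rag\,dt$, carry no such uniform exponential decay in $k$ (the source acts up to time $T$), and for $N\in L^2((0,T),H^1)$ the coefficients $\lag N,e_k\rag$ decay only polynomially. The series $\sum_k|\mathrm{moment}_k|\,\norm{\sigma_k}_{H^1}$ then generically diverges because of the $e^{C\sqrt{\mu_k}}$ factor. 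Making this work would require either weighted spaces forcing $N$ to vanish fast as $t\to T$ (a Liu--Takahashi--Tucsnak-type source-term method, which you do not set up), or abandoning the source formulation altogether --- which is exactly what the paper's iteration does. You would need to either supply that extra machinery or switch to the paper's interval-splitting argument to obtain exact (rather than approximate) controllability with the stated control bound.
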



\subsection{Control of the linearized system}

The result of Theorem \ref{teo-loc-nlh} follows from the null controllability of an associated linear system and the iteration of a control procedure on a clever choice of time steps, as proposed in \cite{acue}.  

We first observe that the ground state $\Phi=1/\sqrt{2\pi}$ is solution  of \eqref{0.1} in any time interval $[0,T]$ for $u=(u_\kappa,0,0,\dots,0)$, with $u_\kappa=\frac{\kappa}{(2\pi)^{p/2}}$ and $\psi_0=\Phi$. Indeed, we recall that $Q_1=1$, thanks to Assumption II. 

Let $s_1,s_0>0$. Consider the following linear system
\begin{equation}\label{lin-intro}
\begin{cases}
\partial_t\xi(t)-\partial_x^2 \xi (t) +\kappa p\xi (t) = \lag v(t), Q\rag \Phi, &t\in(s_0,s_1),\\
\xi(s_0)=\xi_0,
\end{cases}
\end{equation}
with $Q=(Q_1,...,Q_q,\mu_1,\mu_2)\in H^3(\T,\R^{q+2})$ and $q\in\N^*$. We denote by $\xi(\cdot;s_0,\xi_0,v)$ the solution of \eqref{lin-intro} with initial condition $\xi_0$ at time $s_0$ and control $v$.


\begin{definition}
The pair $(-\partial^2_x,Q)$ is said to be $1$-\emph{null controllable} in time $T>s_0$ if there exists a constant $N(T)>0$ such that for any $\xi_0\in L^2(\T,\R)$, there exists a control $v\in L^2((s_0,T),\R^{q+2})$ such that $\xi(T;s_0,\xi_0,v)=0$ and moreover $\norm{v}_{L^2(0,T)}\leq N(T)\norm{\xi_0}_{L^2}$. The best constant, that is,
\begin{equation*}\label{control-cost}
N(T):=\sup_{\norm{\xi_0}_{L^2}=1}\inf \left\{\norm{v}_{L^2}\,:\,\xi(T;s_0,\xi_0,v)=0\right\}
\end{equation*}
is called the \emph{control cost}. If $v\in H^1_0((s_1,T),\R^{q+2}),$ the pair $(-\partial^2_x,Q)$ is called \emph{smoothly} $1$-null controllable and
\begin{equation*}\label{control-cost_1}
N(T):=\sup_{\norm{\xi_0}_{L^2}=1}\inf \left\{\norm{v}_{H^1}\,:\,\xi(T;s_0,\xi_0,v)=0\right\}.
\end{equation*}
\end{definition}


\begin{remark}\label{reg-sol-xi}
    Let us observe that, fixed $v\in L^2((s_0,s_1),\R^{q+2})$ and $Q\in H^1(\T,\R^{q+2})$, for any $\xi_0\in L^2(\T,\R)$, there exists a unique mild solution $$\xi\in C([s_0,s_1],L^2(\T))$$ of \eqref{lin-intro} (see \cite[Proposition 2.1]{acue} adapted to the current case). Furthermore, given a smoother initial condition of \eqref{lin-intro}, for instance $\xi_0\in H^1(\T)$, the solution $\xi$ belongs to the space $$C([s_0,s_1],H^1(\T))\cap H^1((s_0,s_1),L^2(\T))\cap L^2((s_0,s_2),H^2(\T)).$$ Such regularity is called \emph{maximal regularity}, and it is due to the analiticity of the semigroup generated by the operator $\partial^2_x$ (see, for instance, \cite[Proposition 4]{acue} and \cite[Corollary 3]{acu-fp} that can be adapted to \eqref{lin-intro}). 
\end{remark}

The following result shows that the last two components of the control are used to drive the linear system \eqref{lin-intro} to rest. From now on, unless stated otherwise, constants C without indices may vary from line to line.

\begin{proposition}\label{prop-null}

Suppose that Assumption II is satisfied. Then, $(-\partial^2_x,Q)$ is smoothly null controllable in any time $T>0$. Namely, for any $\xi_0\in L^2(\T,\R)$, there exists $(v_1,v_2)\in H^1_0((0,T),\R^2)$ such that the solution of \eqref{lin-intro} with $(s_0,s_1)=(0,T)$ and $v=(0,...,0, v_1,v_2)$ satisfies $$\xi(T;\xi_0, v)=0.$$
Furthermore, there exist $\nu,T_0>0$ such that
\begin{equation}\label{estim-control-time}
N(\tau)\leq e^{\nu/\tau},\quad\forall\,0<\tau\leq T_0.
\end{equation}
\end{proposition}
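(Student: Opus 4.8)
The plan is to establish null controllability of the linear system \eqref{lin-intro} by the moment method, using the orthogonality structure in Assumption II to reduce the problem—whose operator $-\partial_x^2$ has double eigenvalues—to two \emph{decoupled scalar moment problems with simple eigenvalues}. First I would set $A=-\partial_x^2+\kappa p$, with eigenpairs $(\kappa p,c_0)$ and $(k^2+\kappa p,c_k),(k^2+\kappa p,s_k)$ for $k\geq1$, and write the solution of \eqref{lin-intro} via Duhamel's formula. Since $v=(0,\dots,0,v_1,v_2)$ gives $\langle v,Q\rangle\Phi=(v_1\mu_1+v_2\mu_2)\Phi$, projecting the condition $\xi(T)=0$ onto the basis $\{c_0,c_k,s_k\}$ and invoking the relations $\langle\mu_2,c_0\rangle=\langle\mu_2,c_k\rangle=\langle\mu_1,s_k\rangle=0$ from \eqref{hp_intro} decouples the dynamics: $v_1$ alone governs the cosine modes $\{c_0,c_1,c_2,\dots\}$, while $v_2$ alone governs the sine modes $\{s_1,s_2,\dots\}$. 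Dividing by the free-evolution factors, each projection becomes a moment equation
$$\int_0^T e^{-\Lambda_k(T-t)}\,v_i(t)\,dt=-\frac{e^{-\Lambda_k T}}{\Phi\,\langle\mu_i,\cdot\rangle}\,\langle\xi_0,\cdot\rangle,\qquad \Lambda_k=k^2+\kappa p.$$

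The decisive gain is that within each family the eigenvalues $\{\Lambda_k\}$ are now simple and satisfy the spectral gap $\Lambda_{k+1}-\Lambda_k=2k+1\geq1$ together with the summability $\sum_k\Lambda_k^{-1}<\infty$. These are precisely the hypotheses guaranteeing the existence of a Fattorini--Russell-type biorthogonal family $\{\theta_k^{(i)}\}$ to $\{e^{-\Lambda_k t}\}$ in $L^2(0,T)$, as used in \cite{acue}, with a norm bound of the form $\|\theta_k^{(i)}\|\leq C\,e^{\nu_0\sqrt{\Lambda_k}}$ and a $T$-dependent prefactor blowing up like $e^{C/T}$ as $T\to0^+$. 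I would then define each control as the series $v_i(t)=\sum_k\big(\text{target coefficient}\big)\,\theta_k^{(i)}(t)$, which solves the moment problem and hence forces $\xi(T)=0$.

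Next I would verify convergence and derive the cost estimate \eqref{estim-control-time}. The target coefficients contain the ratios $\langle\xi_0,\cdot\rangle/\langle\mu_i,\cdot\rangle$; by the lower bounds $\lambda_k^{q_i}|\langle\mu_i,\cdot\rangle|\geq b_i$ of \eqref{hp_intro} these grow at most polynomially in $k$, but they are multiplied by the free-evolution decay $e^{-\Lambda_k T}$. A Cauchy--Schwarz argument, using $\langle\xi_0,\cdot\rangle\in\ell^2$ with $\|\xi_0\|_{L^2}=1$, bounds the control norm by a sum of order $\sum_k\Lambda_k^{2q_i}e^{-2\Lambda_k T}e^{2\nu_0\sqrt{\Lambda_k}}$; optimizing the exponent shows the sum is dominated by terms with $\sqrt{\Lambda_k}\sim\nu_0/(2T)$, yielding a total of order $e^{\nu/T}$, which is exactly \eqref{estim-control-time}. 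Finally, to obtain the \emph{smooth} ($H^1_0$) controls demanded by the definition, I would run the biorthogonal construction so that $v_i(0)=v_i(T)=0$ and $v_i\in H^1$, measuring the cost in the $H^1$-norm, as in \cite{acue}; the maximal regularity recalled in Remark \ref{reg-sol-xi} then ensures $\xi$ has the stated regularity.

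The main obstacle lies in the quantitative estimates rather than in the decoupling, which follows immediately from Assumption II. Concretely, one must secure the biorthogonal family with \emph{both} the $k$-dependence $e^{\nu_0\sqrt{\Lambda_k}}$ and the sharp $T$-dependence $e^{C/T}$, and one must arrange for the controls to lie in $H^1_0$—not merely $L^2$—while preserving the $e^{\nu/\tau}$ cost. The heart of the matter is the interplay between the polynomial blow-up of the coefficients coming from the lower bounds in \eqref{hp_intro} and the super-exponential smallness of $e^{-\Lambda_k T}$: it is this balance that both makes the defining series converge and produces the precise $e^{\nu/\tau}$ behaviour, and getting these competing exponents to combine correctly is the delicate technical point.
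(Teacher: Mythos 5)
Your overall strategy coincides with the paper's: use the orthogonality relations of Assumption II to split the null-controllability condition into two decoupled scalar moment problems with simple eigenvalues, solve each by a biorthogonal family with a bound of the form $Ce^{C\sqrt{\lambda_k}+C/T}$, and obtain the cost estimate \eqref{estim-control-time} by balancing the polynomial growth of the coefficients $\langle\xi_0,\cdot\rangle/\langle\mu_i,\cdot\rangle$ (controlled by the lower bounds in \eqref{hp_intro}) against the decay $e^{-\lambda_k T}$. All of that matches the paper's proof, including the final summation argument in which the series $\sum_k \lambda_k^{2(q_1+1)}e^{-T\lambda_k+C\sqrt{\lambda_k+1}}$ is shown to grow only polynomially in $1/T$.

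The one genuine gap is the step you yourself flag as delicate: producing controls in $H^1_0((0,T),\R^2)$ rather than $L^2$. A biorthogonal family to the plain exponentials $\{e^{-\Lambda_k t}\}$ only yields an $L^2$ solution of the moment problem, and one cannot simply ``run the construction so that $v_i(0)=v_i(T)=0$ and $v_i\in H^1$'': imposing those boundary conditions and a derivative bound is not compatible with the classical Fattorini--Russell family. The paper's resolution is to prescribe the derivative, $\tilde v_1'(t)=r_1(\tfrac T2-t)e^{-(\frac T2-t)}$ with a new unknown $r_1\in L^2(-\tfrac T2,\tfrac T2)$, impose the compatibility condition $\int_{-T/2}^{T/2}r_1(s)e^{-s}\,ds=0$ so that $\tilde v_1(0)=\tilde v_1(T)=0$, and integrate the original moment equations by parts. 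This converts the problem into a \emph{block} moment problem for $r_1$ against the generalized exponentials $s^je^{-\omega_ks}$, $j=0,1$, $\omega_k=1+\lambda_k$ (the $j=1$ functions being forced by the zero-mean constraint and the $k=0$ moment), which is solved using the biorthogonal family of \cite[Theorem 1.5]{assia1} rather than the classical one; the $H^1$ bound on $v_1$ then follows from the $L^2$ bound on $r_1$ via Poincar\'e. Without this (or an equivalent) mechanism, the $H^1_0$ claim and hence the smooth null controllability with cost $e^{\nu/\tau}$ remain unproved.
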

\begin{proof}
For all $T>0$, we want to prove the existence of $N(T)>0$ such that for any initial condition $\xi_0\in L^2(\T,\R)$ there exists a control $ v=(0,...,0,v_1,v_2)$ so that $\xi(T;0,\xi_0, v)=0$. To this purpose, we first note that the operator $-\partial_x^2 + p\kappa$ exhibits double eigenvalues $\tilde\lambda_j=\lambda_j+p\kappa$ with $j\in\N$ (see definition \eqref{eigenv} of $\lambda_j$) -- with the exception of the first one -- associated to the eigenfunctions $\{c_0,c_j,s_j\}_{j\in\N^*}$, defined in \eqref{eigenf}.

Consider the reduced problem \eqref{lin-intro} with $\hat{Q}=(\mu_1,\mu_2)$ and $(s_0,s_1)=(0,T)$. We look for a control of the form $\hat v=(v_1,v_2)$. We decompose the solution with respect to the Hilbert basis $\{c_0,c_k,s_k\}_{k\in\N^*}$. Recalling that $\lag \mu_1,s_k \rag_{L^2} =\lag \mu_2,c_k \rag_{L^2} = 0$, the smooth null controllability property is equivalent to finding $\hat v\in H^1_0((0,T);\R^2)$ such that
\begin{multline*}
    0=\xi(T;0,\xi_0,\hat v)=\sum_{k\in\N}e^{-\tilde \lambda_k T}\lag \xi_0,c_k\rag_{L^2} c_k+\sum_{k\in\N^*}e^{-\tilde \lambda_k T}\lag \xi_0,s_k\rag_{L^2} s_k \\
    -\int_0^T  v_1(s)\sum_{k\in\N}e^{-\tilde \lambda_k(T-s)}\lag \mu_1\Phi,c_k\rag_{L^2} c_k\,ds-\int_0^T v_2(s)\sum_{k\in\N^*}e^{-\tilde \lambda_k(T-s)}\lag \mu_2\Phi,s_k\rag_{L^2} s_k\,ds.
\end{multline*}
Note that $\Phi=c_0$ is a constant. The above equation is satisfied when the following infinite number of identities hold true for a control function $\hat v=(v_1,v_2)$
\begin{equation*}\begin{split}
    \lag \xi_0,c_k\rag_{L^2}=c_0\int_0^T e^{\tilde \lambda_k s}v_1(s)\lag \mu_1 ,c_k\rag_{L^2}\,ds,\quad\forall\, k\in\N,\\
    \lag \xi_0,s_k\rag_{L^2}=c_0\int_0^T e^{\tilde \lambda_k s}v_2(s)\lag \mu_2 ,s_k\rag_{L^2}\,ds,\quad\forall\, k\in\N^*,
\end{split}\end{equation*}
that can be rewritten in compact form as
\begin{align}
    \int_0^T e^{ \lambda_k s}\tilde v_1(s)\,ds=d_k^1,\quad\forall\,k\in\N,\label{mom-prob1}\\
        \int_0^T e^{ \lambda_k s}\tilde v_2(s)\,ds=d_k^2,\quad\forall\,k\in\N^*,\label{mom-prob2}
\end{align}
where  
$$d_k^1:=\frac{\lag \xi_0,c_k\rag_{L^2}}{c_0\lag \mu_1,c_k\rag_{L^2}}\quad\text{with}\quad \,k\in\N,\quad\quad\quad d_k^2:=\frac{\lag \xi_0,s_k\rag_{L^2}}{c_0\lag \mu_2,s_k\rag_{L^2}}\quad\text{with}\quad \,k\in\N^*, $$
and
\begin{equation*}
    \tilde v_1(s)=e^{p\kappa s}v_1(s),\qquad \tilde v_2(s)=e^{p\kappa s}v_2(s).
\end{equation*}
Note that $d_k^1$, $k\in\N$, and $d_k^2$, $k\in\N^*$, are well-defined thanks to Assumption II. We treat \eqref{mom-prob1} and \eqref{mom-prob2} as two separate moment problems. Let us start by solving \eqref{mom-prob1}. We seek for a function $\tilde v_1$ such that
\begin{equation*}
    \tilde v_1'(t)=r_1\left(\frac{T}{2}-t\right)e^{-(\frac{T}{2}-t)},\qquad r_1\in L^2\left(-\frac{T}{2},\frac{T}{2}\right).
\end{equation*}
Therefore, it should hold
\begin{equation*}
    \tilde v_1(t)-\tilde v_1(0)=\int_0^t \tilde v_1'(s)ds=\int_0^t r_1\left(\frac{T}{2}-s\right)e^{-(\frac{T}{2}-s)}ds.
\end{equation*}
We require that $\tilde v_1(0)=\tilde v_1(T)=0$, which implies that $r_1$ must satisfy
\begin{equation}\label{eq-int-r1e}
    \int_{-\frac{T}{2}}^{\frac{T}{2}}r_1(s)e^{-s}ds=0.
\end{equation}
Integrating by parts \eqref{mom-prob1} and taking into account that $\tilde v_1(0)=\tilde v_1(T)=0$, we get for any $k\in\N^*$
\begin{equation*}
    \begin{split}
        d^1_k&=\int_0^Te^{\lambda_k s}\tilde v_1(s)ds=\frac{1}{\lambda}_k\left(\left.e^{\lambda_k t}\tilde v_1(t)\right|^T_0-\int_0^T e^{-(\frac{T}{2}-t)+\lambda_k t} r_1\left(\frac{T}{2}-t\right)dt\right)\\
        &=\frac{1}{\lambda_k}\int_{-\frac{T}{2}}^{\frac{T}{2}}e^{-s+\lambda_k(\frac{T}{2}-s)}r_1(s)ds.
    \end{split}
\end{equation*}
The above identities can be rewritten as follows
\begin{equation*}
    \int_{-\frac{T}{2}}^{\frac{T}{2}}e^{-(1+\lambda_k)s}r_1(s)ds=\lambda_k d^1_k e^{-\frac{T}{2}\lambda_k},\qquad k\in\N^*.
\end{equation*}
For $k=0$ we have that
\begin{equation*}
    \begin{split}
        d^1_0&=\int_0^T\tilde{v}_1(s)ds=\left. s \tilde v_1(s)\right|^T_0-\int_0^Ts r_1\left(\frac{T}{2}-s\right)e^{-(\frac{T}{2}-s)}ds=\int_{-\frac{T}{2}}^{\frac{T}{2}}\left(\frac{T}{2}-s\right)r_1(s)e^{-s}ds\\
        &=-\int_{-\frac{T}{2}}^{\frac{T}{2}} s e^{-s} r_1(s)ds,
    \end{split}
\end{equation*}
where we have used \eqref{eq-int-r1e}. Define the family
\begin{equation*}
    \omega_k:=1+\lambda_k,\qquad k\in\N^*,
\end{equation*}
and the sequence $\tilde d^1_{kj}$
\begin{equation*}
    \tilde d^1_{01}=-d_0,\qquad \tilde d^1_{k1}=0,\qquad k\in\N^*,
\end{equation*}
\begin{equation*}
\tilde{d}^1_{00}=0,\qquad\tilde{d}^1_{k0}=\lambda_kd^1_ke^{-\frac{T}{2}\lambda_k},\qquad k\in\N^*.
\end{equation*}
Thus, we look for $r_1\in L^2\left(-\frac{T}{2},\frac{T}{2}\right)$ such that
\begin{equation}\label{mom-prob-r1}
    \int_{-\frac{T}{2}}^{\frac{T}{2}}s^je^{-\omega_k s}r_1(s)ds=\tilde{d}^1_{kj},\qquad j=0,1,\quad k\in\N.
\end{equation}
As proved in \cite[Theorem 1.5]{assia1}, there exists $T_0>0$ such that for every $0<T<T_0$ there exists a family
\begin{equation*}
    \{\sigma_{k,j}\}_{k,j\in\N}\subset L^2\left(-\frac{T}{2},\frac{T}{2}\right),
\end{equation*}
which is biorthogonal to
\begin{equation*}
    e_{kj}(s)=s^je^{-\omega_k s},\qquad k,j\in\N,
\end{equation*}
and moreover
\begin{equation*}
    \norm{\sigma_{k,j}}_{L^2(-T/2,T/2)}\leq Ce^{C\sqrt{\omega_k}+\frac{C}{T}},\qquad j=0,1,\quad k\in\N^*.
\end{equation*}
Therefore, by defining
\begin{equation*}
    r_1(s):=\tilde{d}^1_{01}\sigma_{01}+\sum_{k=1}^\infty \tilde{d}^1_{k0}\sigma_{k0}(s),
\end{equation*}
we deduce that $r_1$ solves the moment problem \eqref{mom-prob-r1}. Let us finally show that $r_1\in L^2\left(-\frac{T}{2},\frac{T}{2}\right)$:
\begin{equation*}
    \begin{split}
        \norm{r_1}_{L^2(-T/2,T/2)}&\leq |\tilde d^1_{01}|\norm{\sigma_{01}}_{L^2(-T/2,T/2)}+\sum_{k=1}^\infty|\tilde d^1_{k0}|\norm{\sigma_{k0}}_{L^2(-T/2,T/2)}\\
        &\leq Ce^{C/T}|\tilde d^1_{01}|+C\sum_{k=1}^\infty\lambda_k|d^1_k|e^{-\frac{T}{2}\lambda_k+C\sqrt{1+\lambda_k}+\frac{C}{T}}\\
        &\leq Ce^{C/T}\left(\frac{|\lag \xi_0,c_0\rag_{L^2}|}{c_0|\lag \mu_1,c_0\rag_{L^2}|}+\left(\sum_{k=1}^\infty \frac{\lambda_k^2 e^{-T\lambda_k+C\sqrt{\lambda_k+1}} }{c_0^2|\lag \mu_1,c_k\rag_{L^2}|^2}\right)^{1/2}\norm{\xi_0}_{L^2}\right)\\
        & \leq Ce^{C/T}\left(1+\left(\sum_{k=1}^\infty \lambda_k^{2(q_1+1)} e^{-T\lambda_k+C\sqrt{\lambda_k+1}}\right)^{1/2}\right)\norm{\xi_0}_{L^2},
    \end{split} 
\end{equation*}
where $q_1$ is the parameter introduced in Assumption II. Let us analyse the behaviour with respect to $T$ of the following series:
\begin{equation}
S(T):=\sum_{k=1}^\infty \lambda_k^{2(q_1+1)} e^{-T\lambda_k+C\sqrt{\lambda_k+1}}=\sum_{k=1}^\infty\left(\lambda^{2(q_1+1)}_ke^{-\frac{T}{2}\lambda_k}\right)e^{-\frac{T}{2}\lambda_k+C\sqrt{\lambda_k+1}}.
\end{equation}
For any $\lambda\geq 0$, we introduce the function $f(\lambda):=e^{-\frac{T}{2}\lambda+C\sqrt{\lambda+1}}$. Its derivative is given by
\begin{equation}
f'(\lambda)=e^{-\frac{T}{2}\lambda+C\sqrt{\lambda}}\left(-\frac{T}{2}+\frac{C}{2\sqrt{\lambda+1}}\right),
\end{equation}
and its maximum is attended at $\lambda=\left(\frac{C}{T}\right)^2-1$. Hence, for every $0<T\leq1$ we have
\begin{equation}\label{bound-S(T)}
S(T)\leq e^{\frac{T}{2}+\frac{C}{T}}\sum_{k= 1}^\infty \left(\lambda_k^{2(q_1+1)} e^{-\frac{T}{2}\lambda_k}\right)\leq e^{\frac{C}{T}}\sum_{k=1}^\infty \left(\lambda_k^{2(q_1+1)} e^{-\frac{T}{2}\lambda_k}\right).
\end{equation}
Now, for any $\lambda\geq0$, we consider the function $g(\lambda):=\lambda^{2(q_1+1)}e^{-\frac{T}{2}\lambda}$. From its derivative $$g'(\lambda)=\lambda^{2q_1+1}e^{-\frac{T}{2}\lambda}\left(2(q_1+1)-\frac{T}{2}\lambda\right),$$ we deduce that
\begin{equation}
g(\lambda)\text{ is }\left\{\begin{array}{ll}
\text{increasing }&\text{if }0\leq \lambda<\frac{4(q_1+1)}{T},\\\\
\text{decreasing }&\text{if }\lambda\geq \frac{4(q_1+1)}{T},
\end{array}\right.
\end{equation}
and $g$ has a maximum for $\lambda=\frac{4(q_1+1)}{T}$. We define the index
\begin{equation}
k_1:=k_1(T)=\sup \left\{k\in\N^* \,:\, \lambda_k\leq\frac{4(q_1+1)}{T}\right\}.
\end{equation}
We can rewrite the sum in \eqref{bound-S(T)} as
\begin{equation}
\sum_{k=1}^\infty \lambda_k^{2(q_1+1)}e^{-\frac{T}{2}\lambda_k}=\sum_{1\leq k\leq k_1-1}\lambda_k^{2(q_1+1)}e^{-\frac{T}{2}\lambda_k}+\sum_{k_1\leq k	\leq k_1+1}\lambda_k^{2(q_1+1)}e^{-\frac{T}{2}\lambda_k}+\sum_{k\geq k_1+2}\lambda_k^{2(q_1+1)}e^{-\frac{T}{2}\lambda_k}.
\end{equation}
For any $1\leq k\leq k_1-1$, we have
\begin{equation}
\int_{\lambda_k}^{\lambda_{k+1}}\lambda^{2(q_1+1)}e^{-\frac{T}{2}\lambda}d\lambda\geq (\lambda_{k+1}-\lambda_k)\lambda_k^{2(q_1+1)}e^{-\frac{T}{2}\lambda_k}\geq \lambda_k^{2(q_1+1)}e^{-\frac{T}{2}\lambda_k},
\end{equation}
and for any $k\geq k_1+2$, it holds that
\begin{equation}
\int_{\lambda_{k-1}}^{\lambda_{k}}\lambda^{2(q_1+1)}e^{-\frac{T}{2}\lambda}d\lambda\geq (\lambda_k-\lambda_{k-1})\lambda_k^{2(q_1+1)}e^{-\frac{T}{2}\lambda_k}\geq  \lambda_k^{2(q_1+1)}e^{-\frac{T}{2}\lambda_k}.
\end{equation}
Therefore, we obtain that 
\begin{equation}\label{int-plus-sum}
\sum_{k=1}^\infty \lambda_k^{2(q_1+1)}e^{-\frac{T}{2}\lambda_k}\leq 2\int_0^\infty \lambda^{2(q_1+1)}e^{-\frac{T}{2}\lambda}d\lambda+\sum_{k_1\leq k	\leq k_1+1}\lambda_k^{2(q_1+1)}e^{-\frac{T}{2}\lambda_k}.
\end{equation}
Recalling that $g$ has a maximum at $\lambda=\frac{4(q_1+1)}{T}$, we have that
\begin{equation}\label{k1,k1+1}
\lambda_k^{2(q_1+1)}e^{-\frac{T}{2}\lambda_k}\leq \left(\frac{4(q_1+1)}{T}\right)^{2(q_1+1)}e^{-2(q_1+1)},\quad\text{for }k=k_1,k_1+1.
\end{equation}
Moreover, we can rewrite the integral term in \eqref{int-plus-sum} as
\begin{equation}\label{euler-integral}
\begin{split}
\int_0^\infty \lambda^{2(q_1+1)}e^{-\frac{T}{2}\lambda}d\lambda&=\frac{2}{T}\int_0^\infty \left(\frac{2s}{T}\right)^{2(q_1+1)}e^{-s}ds\\
&= \left(\frac{2}{T}\right)^{2q_1+3}\int_0^\infty s^{2(q_1+1)}e^{-s}ds=\Gamma(2q_1+3) \left(\frac{2}{T}\right)^{2q_1+3},
\end{split}
\end{equation}
where by $\Gamma(\cdot)$ we indicate the Euler integral of the second kind.
Therefore, thanks to \eqref{k1,k1+1} and \eqref{euler-integral} we conclude that there exist two positive constants $C^1_{q_1},C^2_{q_1}$ such that
\begin{equation}
\sum_{k=1}^\infty \lambda_k^{2(q_1+1)}e^{-\frac{T}{2}\lambda_k}\leq \frac{C^1_{q_1}}{T^{2q_1+2}}+\frac{C^2_{q_1}}{T^{2q_1+3}}.
\end{equation}
From the above estimate we can prove that there exist positive constants $C$ for which
\begin{equation}
\norm{r_1}_{L^2(-T/2,T/2)}\leq Ce^{C/T}\norm{\xi_0}_{L^2}, \quad\forall\,0<T<\min\{T_0,1\},
\end{equation}
and thus 
\begin{equation}
\norm{(\tilde v_1)'}_{L^2(0,T)}\leq e^{T/2}\norm{r_1}_{L^2(-T/2,T/2)}\leq Ce^{C/T}\norm{\xi}_0,\quad\forall\,0<T<\min\{T_0,1\}.
\end{equation}
From the Poincar{\'e} inequality, there exists a constant $C>0$ such that for any $\tilde v\in H^1_0(0,T)$ it holds that $\norm{\tilde v_1}_{H^1(0,T)}\leq C\norm{(\tilde v_1)'}_{L^2(0,T)}$, and we deduce that
\begin{equation}
\norm{\tilde v_1}_{H^1(0,T)}\leq C\norm{(\tilde v_1)'}_{L^2(0,T)}\leq Ce^{C/T}\norm{\xi_0}_{L^2},\quad\forall\,0<T<\min\{T_0,1\}.
\end{equation}
Finally, recalling that $v_1(t)=e^{-p\kappa t}\tilde v_1(t)$, we conclude that
\begin{equation*}
\begin{split}
\norm{v_1}^2_{H^1(0,T)}&=\int_0^Te^{-2p\kappa t}|\tilde v_1(t)|^2dt+\int_0^T\left(\left(e^{-2pk t}|\tilde v_1(t)|\right)'\right)^2dt\\
&\leq C\norm{\tilde v_1}^2_{H^1(0,T)}\leq Ce^{C/T}\norm{\xi_0}_{L^2}^2,
\end{split}
\end{equation*}
for all $0<T\leq\min\{T_0,1\}$.

The same computations are also valid for the second component of the control $v_2$ and the proof is then completed. 
\end{proof}

\begin{remark}\label{nuovo_revisione}
The choice of Assumption II in Proposition \ref{prop-null} is suitable to deduce the controllability result stated in Theorem \ref{teo-loc-nlh}, and to show explicit examples of functions $\mu_1,\mu_2$ as in Example \ref{exemple}. However, the null controllability of \eqref{lin-intro} might be studied with less strict hypotheses than Assumption II.

\begin{itemize}
\item The result may be ensured when $\mu_1,\mu_2\in H^3(\T,\R)$ verify
 \begin{equation*}\begin{split}
\lag \mu_1,c_0 \rag_{L^2}\neq 0,\ \ \ \ \  \    \ \ \lag \mu_2,&c_0 \rag_{L^2}=0,\\
     \exists\,a_1,b_1>0,0<q_1 \leq 1 \,:\: e^{  b_1\lambda_k^{ q_1}} \left|\lag \mu_1 ,c_k\rag_{L^2}\right|\geq  a_1 ,\,\, &\text{ and }\,\, \lag \mu_1,s_k \rag_{L^2} = 0,\quad\forall\,k\in\N^*,\\
        \exists\,a_2,b_2>0,0<q_2 \leq 1 \,:\: e^{b_2\lambda_k^{ q_2}}\left|\lag \mu_2 ,s_k\rag_{L^2}\right|\geq  a_2 ,\,\, &\text{ and }\,\, \lag \mu_2,c_k \rag_{L^2}= 0,\quad\forall\,k\in\N^*.
\end{split}\end{equation*} 
It could be possible to adapt to our problem the theory developed in \cite{lissy} in the case of Dirichlet boundary conditions (see also \cite[Section 6.1]{ammar1}). Then, the strategy leading to Proposition \ref{prop-null} might ensure the null-controllability of \eqref{lin-intro} for every $T>T^*$ where $T^*$ is a minimal time which we expect to depend on $\max\{b_1,b_2\}$.
Note that in \cite[Theorem 1.2]{lissy} the upper bound of the control cost holds only for $T^*>0$. The relevant case for our purposes is however $T^*=0$, which seems to need further investigations.

\item Another approach to establish null controllability of \eqref{lin-intro} may be the following. For any $f:\T\rightarrow \R$, we introduce the decomposition $f=f^s +f^a$ where $$f^s(\cdot)=\frac{f(\cdot) +f(2\pi - \cdot)}{2},\ \ \ \ \ \ \ f^a(\cdot)=\frac{f(\cdot) -f(2\pi - \cdot)}{2}.$$ Now, $f^s$ is symmetric and $f^a$ is antisymmetric with respect to the point $x=\pi$. We apply such decomposition to 
\eqref{lin-intro} to obtain the following system
\begin{align}\label{lissy2_pre}
    \partial_t\begin{pmatrix}
           \xi^s\\
           \xi^a\\
    \end{pmatrix} 
    =   \begin{pmatrix}
           -\partial_x^2 + \kappa p & 0\\
          0 & -\partial_x^2 + \kappa p\\
    \end{pmatrix}
    \cdot
    \begin{pmatrix}
            \xi^s\\
           \xi^a\\
    \end{pmatrix}+\Phi\begin{pmatrix}
           \mu_1^s & \mu_2^s\\
          \mu_1^a & \mu_2^a\\
    \end{pmatrix}
    \cdot
    \begin{pmatrix}
            v_1\\
           v_2\\
    \end{pmatrix}.
      \end{align}
Problem \eqref{lissy2_pre} is now defined in $L^2_s(\T )\times L^2_a(\T )$, where $L^2_s(\T )$ and $L^2_a(\T )$ are the spaces of symmetric and antisymmetric $L^2$-functions, respectively. We uniquely identify both spaces $L^2_s(\T )$ and $L^2_a(\T )$ with $L^2( 0,\pi )$ and we note that, for any $\xi\in H^2(\T)$, it holds 
\begin{align}\label{obs}\xi^s|_{[0,\pi]}\in \{\psi\in H^2(0,\pi)\ :\ \partial_x\psi(0)=\partial_x\psi(\pi)=0\},\ \ \ \ \ \xi^a|_{[0,\pi]}\in H^2\cap H^1_0(0,\pi). \end{align}
Thus, \eqref{lissy2_pre} is equivalent to the following problem in $L^2(0,\pi)^2$
\begin{equation}\label{lissy2}
\begin{cases}
    \partial_t Y (t,x)
    =   A 
    \cdot Y +  \kappa\, p\, Y+\Phi\, B (x)
    \cdot
    V(t),& (t,x)\in (s_0,s_1)\times(0,\pi),\\
    Y(s_0)=Y_0,\\
    \end{cases}\end{equation}
      \begin{align*}
    Y=\begin{pmatrix}
           y^1\\
           y^2\\
    \end{pmatrix},\quad 
    A=   \begin{pmatrix}
           -\partial_x^2 & 0\\
          0 & -\partial_x^2\\
    \end{pmatrix},\ \ \ \ 
B=\begin{pmatrix}
           \mu_1^2 & \mu_2^2\\
          \mu_1^1 & \mu_2^1\\
    \end{pmatrix},\quad
V=
    \begin{pmatrix}
            v_1\\
           v_2\\
    \end{pmatrix},\quad
Y_0=\begin{pmatrix}
           y^1_0\\
           y^2_0\\
    \end{pmatrix},
      \end{align*}
where $\mu^1_j=\mu^s_j|_{[0,\pi]}$ and $\mu^2_j=\mu^a_j|_{[0,\pi]}$ with $j=1,2$, and $y_0^1=\xi_0^s$ and $y_0^2=\xi_0^a$. From \eqref{obs} it follows that problem \eqref{lissy2} is equipped with the following boundary conditions
\begin{align}\label{lissy2-cond}\partial_x y^1(0)=\partial_x y^1(\pi)=0,\ \ \ \ \ \ \ \ \  y^2(0)=y^2(\pi)=0. \end{align}
The null controllability property of \eqref{lin-intro} can therefore be studied by investigating the systems of parabolic equations \eqref{lissy2}-\eqref{lissy2-cond}. Observe that Assumption II implies that $\mu^2_1=\mu^1_2=0$ because $\mu_1$ is symmetric and $\mu_2$ is antisymmetric. Thus, the null-controllability of system \eqref{lissy2}-\eqref{lissy2-cond} boils down to control two decoupled equations with two different controls.
This kind of problem has been studied in the literature, mostly  in the case of an operator $B$ that does not depend on the space variable and/or the control $v\in L^2((s_0,s_1)\times(0,\pi),\R)$ depends on both time and space. 
In \cite{kalman1} for instance, the authors studied the problem by introducing an extension to partial differential equations of the classical Kalman condition (see also \cite{kalman2,kalman5,kalman3,kalman4}). From this perspective, the existing theory could potentially be expanded to encompass systems like \eqref{lissy2}-\eqref{lissy2-cond} and prove null controllability even without assuming Assumption II. Additionally, the techniques proposed in \cite{ammar1,assia2,system1} may prove applicable for this purpose.



    
\end{itemize}

\end{remark}

\subsection{Proof of Main Theorem B}
We now prove Theorem \ref{teo-loc-nlh}.

\begin{proof}[Proof of Theorem \ref{teo-loc-nlh}]

{$\mathbf (1)$ {\bf Time decomposition and preliminaries. }}
Let $T>0$ and define
\begin{equation*}
    T_f:=\min\{T,\frac{\pi^2}{6},\frac{\pi^2}{6}T_0\},
\end{equation*}
where $T_0$ is the constant in \eqref{estim-control-time}. Let $T_1$ be defined as follows
\begin{equation*}
    T_1:=\frac{6}{\pi^2}T_f.
\end{equation*}
Observe that, with this choice $0<T_1\leq1$. We now define the sequences
\begin{equation*}
    T_j:=\frac{T_1}{j^2},\quad j\geq1,\qquad\text{and}\qquad \tau_n:=\sum_{j=1}^n T_j,\quad n\geq0,
\end{equation*}
with the convention $\sum_{j=1}^0T_j=0$. It is easy to prove that
\begin{equation*}
    \sum_{j=1}^\infty T_j=T_f,
\end{equation*}
and thus we will perform an iterative control procedure on the consecutive time intervals $[\tau_n,\tau_{n+1}]$, $n\geq0$ so that at the limit $n\to \infty$ we will prove exact controllability of \eqref{0.1} to the ground state solution in time $T_f$.

\medskip

\noindent
{$\mathbf (2)$ {\bf Estimates in the first time step: inductive basis.}} Let us set $y=\psi-\Phi$, where $\psi$ and $\Phi$ are solutions of \eqref{0.1} associated with the initial conditions $\psi_0,\Phi\in H^3(\T)$ and controls $u\in H^1_{loc}(\R^+,\R^{q+2})$ and $\hat{u}:=(u_\kappa,0,0,\dots,0)$ with $u_\kappa=\frac{\kappa}{\Phi^p}$, respectively. Consider the equation satisfied by $y$ in a general time interval $(s_0,s_1)$:
\begin{equation}\label{y}
    \begin{cases}
        \partial_t y(t)-\partial^2_xy(t)+\kappa(y(t)+\Phi)^{p+1}=\langle u(t),Q\rangle y(t)+\langle u(t),Q \rangle \Phi,&t\in(s_0,s_1),\\
        y(s_0)=y_{s_0}=\psi(s_0)-\Phi.
    \end{cases}
\end{equation}
Our aim is to prove that system \eqref{y} is null controllable in time $T_f$ by means of a bilinear control $u$. This would imply that $\psi(T_f;\psi_0,0,u)=\Phi$. For this purpose, we shall first consider problem \eqref{y} and its linearization \eqref{lin-intro}, with initial condition $\xi_0=y_0:=\psi_0-\Phi$, in the time interval $[s_0,s_1]=[\tau_0,\tau_1]=[0,T_1]$. Observe that, thanks to the regularity of $\psi_0$ and $u$, the solution of \eqref{0.1} is globally in time well-defined (see Proposition \ref{thm-global-well-pos}), as well as the solution of the associated linear system  
\begin{equation}\label{xi-tau0-tau1}
\begin{cases}
\partial_t\xi(t)-\partial_x^2 \xi (t) +\kappa p\xi (t) = \lag v^1(t), Q\rag \Phi, &t\in(\tau_0,\tau_1),\\
\xi(\tau_0)=\xi_0,
\end{cases}
\end{equation}
(see Remark \ref{reg-sol-xi}).

From Proposition \ref{prop-null} we deduce that there exists a control, defined as $v^1=(0,\dots,0,v_1^1,v_2^1)$, with $(v_1^1,v_2^1)\in H^1_0((\tau_0,\tau_1),\R^2)$ such that 
\begin{equation*}
    \xi(\tau_1;\tau_0,\xi_0,v^1)=0,
\end{equation*}
and 
\begin{equation}\label{estim-v1}
    \norm{v^1}_{H^1(\tau_0,\tau_1)}\leq N(\tau_1)\norm{\xi_0}_{L^2}=N(\tau_1)\norm{y_0}_{L^2}\leq N(\tau_1)\norm{y_0}_{H^1} ,
\end{equation}
with $N(\tau_1)$ that satisfies \eqref{estim-control-time} (because $\tau_1\leq T_0$). We now define
\begin{equation*}
    u^1=(u_\kappa,0,\dots,0,v_1,v_2)\in H^1((\tau_0,\tau_1),\R^{q+2}).
\end{equation*}
Using such control in equation \eqref{0.1} (and so in \eqref{y} in the time interval $[\tau_0,\tau_1]$), one easily finds that \eqref{y} reads as
\begin{equation}\label{y-v1}
    \begin{cases}
        \partial_t y(t)-\partial^2_xy(t)+\kappa\displaystyle\sum_{j=2}^{p+1}\binom{p+1}{j}y^j(t)\Phi^{p+1-j}=\langle v^1(t),Q\rangle y(t)+\langle v^1(t),Q \rangle \Phi,&t\in(\tau_0,\tau_1),\\
        y(\tau_0)=y_0.
    \end{cases}
\end{equation}
We recall that $y_0\in H^3(\T,\R)$ and that $\mu_1,\mu_2\in H^3(\T,\R)$, thanks to Assumption II. Hence, from the definition of $y=\psi-\Phi$ and since $\psi$ and $\Phi$ are both solutions of \eqref{0.1}, we deduce 
from \emph{i.} of Proposition \ref{well-pos} that
\begin{equation}\label{estim-y-yo-v1}
   \sup_{\tau_0\leq t\leq \tau_1}\norm{y(t)}_{H^1 }= \sup_{\tau_0\leq t\leq \tau_1}\norm{\psi(t)-\Phi}_{H^1 }\leq C\left(\norm{\psi_0-\Phi}_{H^1 }+\norm{u^1-\hat u}_{L^2(\tau_1,\tau_1)}\right)\leq C\left(\norm{y_0}_{H^1 }+\norm{v^1}_{H^1(\tau_1,\tau_1)}\right).
\end{equation}
Thanks to \eqref{estim-v1}, we conclude that
\begin{equation*}
    \sup_{\tau_0\leq t\leq \tau_1}\norm{y(t)}_{H^1 }\leq C\left(1+N(\tau_1)\right)\norm{y_0}_{H^1 }.
\end{equation*}
Now, we introduce $w:=y-\xi$ (we use the control $u^1$ for $y$ and $v^1$ for $\xi$ on the time interval $[\tau_0,\tau_1]$) and we observe that $w$ solves the following problem
\begin{equation}\label{w}
    \begin{cases}
       \partial_t w(t)-\partial^2_x w(t)-\kappa(p+1)\Phi^pw(t)+\kappa\displaystyle\sum_{j=2}^{p+1}\binom{p+1}{j}\Phi^{p+1-j} y^j(t)=\langle v^1(t),Q\rangle y(t),&t\in(\tau_0,\tau_1),\\
        w(\tau_0)=0.
    \end{cases}
\end{equation}
Thanks to estimate \eqref{app-estim-w} of Proposition \ref{app-prop-w}, we have that
\begin{equation}\label{norm-w-tau0-tau1}
        \sup_{t\in[\tau_0,\tau_1]}\norm{w(t)}_{H^1  }\leq A_4(\tau_1,\norm{y_0}_{H^1 })\norm{y_0}^2_{H^1 }.
\end{equation}
Observe that our initial condition $\psi_0$ satisfies $\psi_0\in \{\psi\in H^3(\T,\R): \|\psi-\Phi\|_{H^1}<R_T\}$, with $R_T$ defined in \eqref{R_T}. Therefore, since $R_T<1$, we have that $\norm{y_0}_{H^1}<1$ and moreover 
\begin{equation*}
    N(\tau_1)\norm{y_0}_{H^1}\leq e^{\nu/\tau_1}e^{-6\Gamma_0/\tau_1}<1,
\end{equation*}
since $\Gamma_0>\nu$ (see definition \eqref{Gamma_0}). Hence, we obtain that
\begin{equation}\label{A4<K}
    A_4(\tau_1,\norm{y_0}_{H^1 })\leq K(\tau_1),
\end{equation}
    where
\begin{multline}\label{K}
    K^2(\tau):=2\left(2\kappa\tau(p+1)^2\sum_{j=2}^{p+1}\binom{p+1}{j}\Phi^{p+1-j}\left(1+N(\tau)^{4}\right)+C_Q^2N(\tau)^2\left(1+N(\tau)^2\right)\right)\cdot\\
    e^{\tau\left(2\kappa(p+1)\Phi^p+\kappa(p+1)\sum_{j=2}^{p+1}\Phi^{p+1-j}+1\right)}.
\end{multline}
\begin{remark}
    Note that for any $\tau\leq1$ it is possible to prove that
    \begin{equation}\label{estim-K}
        K(\tau)\leq e^{\Gamma_0/\tau}.
    \end{equation}
\end{remark}

Since $\tau_1\leq 1$, we deduce that \eqref{estim-K} holds for $\tau=\tau_1$. Using \eqref{A4<K} and \eqref{estim-K} (for $\tau=\tau_1$) in \eqref{norm-w-tau0-tau1} and recalling that $\xi(\tau_1;\tau_0,\xi_0,v^1)=0,$ we conclude that
\begin{equation}\label{induction_nee}
    \norm{y(\tau_1;\tau_0,y_0,u^1)}_{H^1}\leq e^{\Gamma_0/\tau_1}e^{-12\Gamma_0/\tau_1}=e^{-11\Gamma_0/\tau_1}<1.
\end{equation}
\smallskip

\noindent
{$\mathbf (3)$ {\bf Induction argument.}} Inequality \eqref{induction_nee} enables us to apply an iterative argument. In fact, we have just proved the first step of an induction procedure which consists in building in consecutive time intervals of the form $[\tau_{n-1},\tau_n]$, $n\geq1$, a control $u^n=(u_{\kappa},0,...,0,u_1^n,u_2^n)\in H^1((\tau_{n-1},\tau_n),\R^{q+2})$ such that 
\begin{equation}\label{induction}
    \begin{array}{ll}
        1.& \norm{u^n}_{H^1(\tau_{n-1},\tau_n)}\leq N(T_n)\norm{y_{n-1}}_{H^1 },\\
        2.& \xi(\tau_n;\tau_{n-1},y_{n-1},v^{n})=0,\\
        3.& \norm{y(\tau_n;\tau_{n-1},y_{n-1},u^{n})}_{H^1 }\leq e^{\left(\sum_{j=1}^n 2^{n-j}j^2-2^n6\right)\Gamma_0/T_1},\\
        4.& \norm{y(\tau_n;\tau_{n-1},y_{n-1},u^{n})}_{H^1 }\leq \prod_{j=1}^n K(T_j)^{2^{n-j}}\norm{y_0}^{2^n}_{H^1 },
    \end{array}
\end{equation}
where  $v^n:=u^n-\hat u$ and, thanks to Proposition \ref{thm-global-well-pos},
\begin{equation}\label{y_n-1}
    y_{n-1}:=y(\tau_{n-1},0,y_0,q^{n-1})\in H^3(\T,\R),
\end{equation}
\begin{equation}\label{q_n-1}
    q^{n-1}(t)=\sum_{j=1}^{n-1}u^j(t)\chi_{[\tau_{j-1},\tau_j]}(t)\qquad \text{(component-wise).}
\end{equation}
Observe that, by construction
\begin{equation*}
    y_n=y(\tau_{n};\tau_{n-1},y_{n-1},u^n)\in H^3(\T,\R),\qquad\forall\,n\geq1.
\end{equation*}
We emphasize that, due to the global well-posedness of the solution of \eqref{0.1} (and so of \eqref{y}), we do not meet any problem of existence when changing at each step of the proof the initial condition, the control and the time interval. 

Let us now prove the inductive step of the argument. We suppose properties 1.--4. of \eqref{induction} to hold for each $j=1,\dots,n-1$. Hence, assume that we have built controls $u^j=(u_\kappa,0,...,u_1^j,u_2^j)\in H^1((\tau_{j-1},\tau_j),\R^{q+2})$ such that 1.--4. of \eqref{induction} are satisfied. In particular, for $j=n-1$, suppose that
\begin{equation}\label{induction_n-1}
    \begin{array}{ll}
        1.& \norm{u^{n-1}}_{H^1(\tau_{n-2},\tau_{n-1})}\leq N(T_{n-1})\norm{y_{n-2}}_{H^1 },\\
        2.& \xi(\tau_{n-1};\tau_{n-2},y_{n-2},v^{n-1})=0,\\
        3.& \norm{y(\tau_{n-1};\tau_{n-2},y_{n-2},u^{n-1})}_{H^1 }\leq e^{\left(\sum_{j=1}^{n-1} 2^{n-1-j}j^2-2^{n-1}6\right)\Gamma_0/T_1},\\
        4.& \norm{y(\tau_{n-1};\tau_{n-2},y_{n-2},u^{n-1})}_{H^1 }\leq \prod_{j=1}^{n-1} K(T_j)^{2^{n-1-j}}\norm{y_0}^{2^{n-1}}_{H^1 }.
    \end{array}
\end{equation}
We now prove the existence of $u^n=(u_\kappa,0,...,0,u_1^n,u_2^n)\in H^1((\tau_{n-1},\tau_n),\R^{q+2})$ such that 1.--4. of \eqref{induction} are fulfilled. Define $q^{n-1}$ and $y_{n-1}$ as in \eqref{q_n-1} and \eqref{y_n-1}. 
We apply Proposition \ref{prop-null} to the linear system \eqref{lin-intro} with $(s_0,s_1)=(0,T_n)$ and $\xi(0)=y_{n-1}$. The result ensures the existence of a control $\tilde v^n=(0,...,0,\tilde u_1^n,\tilde u_2^n)\in H^1_0((0,T_n),\R^{q+2})$ that satisfies
\begin{equation*}
    \norm{\tilde v^n}_{H^1(0,T_n)}\leq N(T_n)\norm{y_{n-1}}_{H^1 },
\end{equation*}
with $N(T_n)\leq e^{\nu/T_n}$ and
\begin{equation*}
\xi(T_n;0,y_{n-1},\tilde v^n)=0.
\end{equation*}
We set $\tilde u^n(s):=\hat{u}+\tilde v^n(s)=(u_\kappa,0,...,0,\tilde u_1^n,\tilde u_2^n)$ with $s\in [0,T_n].$
Now, consider problem \eqref{y} for $(s_0,s_1)=(0,T_n)$ with $y(0)=y_{n-1}$. By using the control $\tilde u^n$ in \eqref{y}, we get
\eqref{y-v1} in $[0,T_n]$ with control $\tilde v^n$ instead of $v^1$. 
 %
We denote by $y(s;0,y_{n-1},\tilde u^n)$ with $s\in [0,T_n]$ its solution. 
Observe that, since $$    y_{n-1}=y(\tau_{n-1};0,y_0,q_{n-1})=y(\tau_{n-1};\tau_{n-2},y_{n-2},u^{n-1})\in H^3(\T,\R),$$
from 3. of \eqref{induction_n-1} we deduce that
\begin{equation}\label{N*y_n-1<1}
    N(T_n)\norm{y_{n-1}}_{H^1 }\leq e^{\nu n^2/T_1}e^{\left(\sum_{j=1}^{n-1} 2^{n-1-j}j^2-2^{n-1}6\right)\Gamma_0/T_1}\leq e^{-(2n+3)\Gamma_0/T_1}<1,
\end{equation}
where we have used that $\nu<\Gamma_0$ and the identity 
$$\sum_{j=0}^n \frac{j^2}{2^j}=2^{-n}(-n^2-4n+6(2^n-1)),\qquad\forall\,n\geq0.$$
Now, for $s\in[0, T_n]$, we define $$ w(s):=y(s;0,y_{n-1},\tilde u^n)- \xi(s;0,y_{n-1},\tilde v^n).$$
Thus, we apply Proposition \ref{app-prop-w}, and we deduce that
\begin{equation*}
    \norm{  y(T_n;0,y_{n-1},\tilde u^n)}_{H^1 }=\norm{ w(T_n;0,\tilde v^n)}_{H^1 }\leq A_4(T_n;\norm{y_{n-1}}_{H^1 })\norm{y_{n-1}}_{H^1 }^2.
\end{equation*}
We shift forward the time interval into $[\tau_{n-1},\tau_n]$ and we define
\begin{equation*}
    u^n(t):=\tilde{u}^n(t-\tau_{n-1}),\qquad v^n(t):=\tilde{v}^n(t-\tau_{n-1}),\qquad t\in(\tau_{n-1},\tau_n),
\end{equation*}
and we obtain that
\begin{equation*}
    \norm{u^n}_{H^1(\tau_{n-1},\tau_n)}\leq N(T_n)\norm{y_{n-1}}_{H^1 },
\end{equation*}
and
\begin{equation*}
    \xi(\tau_n;\tau_{n-1},y_{n-1},v^n)=\xi(T_n;0,y_{n-1},\tilde v^n)=0.
\end{equation*}
Thus, 1. and 2. of \eqref{induction} are fulfilled.

Recalling the definition of $K(\cdot)$ in \eqref{K}, the property \eqref{N*y_n-1<1} and that $\nu<\Gamma_0$, we deduce that
\begin{equation}\label{yKy_n-1}
    \norm{y(\tau_n;\tau_{n-1},y_{n-1},u^n)}_{H^1 }= \norm{y(T_n;0,y_{n-1},\tilde u^n)}_{H^1 }\leq K(T_n)\norm{y_{n-1}}^2_{H^1 }.
\end{equation}
Thus, using 3. of \eqref{induction_n-1} and the estimate above, we obtain
\begin{equation*}
    \norm{y(\tau_n;\tau_{n-1},y_{n-1},u^n)}_{H^1 }\leq e^{n^2  \Gamma_0/T_1}\left[e^{\left(\sum_{j=1}^{n-1} 2^{n-1-j}j^2-2^{n-1}6\right)\Gamma_0/T_1}\right]^2=e^{\left(\sum_{j=1}^{n} 2^{n-1-j}j^2-2^{n-1}6\right)\Gamma_0/T_1},
\end{equation*}
that is, 3. of \eqref{induction} is satisfied. Finally, using again \eqref{yKy_n-1} and thanks to 4. of \eqref{induction_n-1} we conclude that
\begin{equation*}
    \norm{y(\tau_n;\tau_{n-1},y_{n-1},u^n)}_{H^1 }\leq K(T_n)\left[\prod_{j=1}^{n-1} K(T_j)^{2^{n-1-j}}\norm{y_0}^{2^{n-1}}_{H^1 }\right]^2=\prod_{j=1}^{n} K(T_j)^{2^{n-j}}\norm{y_0}^{2^{n}}_{H^1 },
\end{equation*}
which is exactly 4. of \eqref{induction}. The induction argument is therefore concluded.

\medskip

\noindent
{$\mathbf (4)$ {\bf Conclusion.}} Let us now observe that, from 4. of \eqref{induction}, for every $n\in\N$ it holds that
\begin{equation*}
    \begin{split}
        \norm{y(\tau_n;\tau_{n-1},y_{n-1},u^n)}_{H^1 }&\leq \prod_{j=1}^{n} K(T_j)^{2^{n-j}}\norm{y_0}^{2^{n}}_{H^1 }\leq \prod_{j=1}^{n} \left(e^{\Gamma_0 j^2/T_1}\right)^{2^{n-j}}\norm{y_0}^{2^{n}}_{H^1 }\\
        &=e^{(\Gamma_0 2^n/T_1)\sum_{j=1}^n j^2/2^j}\norm{y_0}^{2^{n}}_{H^1 }\leq e^{(\Gamma_0 2^n/T_1)\sum_{j=1}^\infty j^2/2^j}\norm{y_0}^{2^{n}}_{H^1 }\\
        &\leq \left(e^{6\Gamma_0/T_1}\norm{y_0}_{H^1 }\right)^{2n},
    \end{split}
\end{equation*}
where in the last inequality we have used that $ \sum_{j=1}^\infty \frac{j^2}{2^j}=6.$ By definition, the above estimate reads as
\begin{equation*}
    \norm{y(\tau_n;0,y_{0},q^n)}_{H^1 }\leq \left(e^{6\Gamma_0/T_1}\norm{y_0}_{H^1 }\right)^{2n},
\end{equation*}
with 
$$q^n(t)=\sum_{j=1}^n u^j(t)\chi_{(\tau_{j-1},\tau_j)}(t)\qquad \text{(component-wise)}.$$
Taking the limit as $n\to+\infty$, we deduce that
\begin{equation*}
    \norm{\psi(\pi^2 T_1/6;0,\psi_0,q^\infty)-\Phi}_{H^1 }=\norm{y(\pi^2 T_1/6;0,\psi_0,q^\infty)}_{H^1 }=\norm{y(T_f;0,\psi_0,q^\infty)}_{H^1 }\leq 0,
\end{equation*}
thanks to the definition \eqref{R_T} of $R_T$. Thus, we construct a control $u\in H^1_{loc}(\R^+,\R^{q+2})$ 
\begin{equation*}
    u(t):=\begin{cases}
        
    \sum_{j=1}^\infty u^j(t)\chi_{(\tau_{j-1},\tau_j)}(t),&0<t\leq T_f, \\
    0 &t>T_f,
    \end{cases}\qquad\text{(component-wise)},
\end{equation*}
such that, at time $T_f\leq T$, the solution of \eqref{0.1} reaches exactly the ground state, that is, \begin{equation*}
    \psi(T_f;0,\psi^0,u)=\Phi.
\end{equation*}
Furthermore, we can derive a bound for the $H^1$-norm of the control
\begin{equation*}
    \norm{u}^2_{H^1(0,T_f)}\leq \sum_{n=1}^\infty\norm{u^n}^2_{H^1(0,T_f)}\leq \sum_{n=1}^\infty\left(N(T_n)\norm{y_{n-1}}_{H^1 }\right)^2\leq \sum_{n=1}^\infty e^{-2(2n+3)\Gamma_0/T_1}=\frac{e^{-\pi^2\Gamma_0/T_f}}{e^{2\pi^1\Gamma_0/(3T_f)}-1}.
\end{equation*}
where we have used \eqref{N*y_n-1<1}. Since $T_f\leq T$, we easily obtain \eqref{bound-control}.
\end{proof}

We are finally ready to prove Theorem \ref{mtb-gene} and Main Theorem B.

\begin{proof}[Proof of Theorem \ref{mtb-gene}]
The result is obtained by combining Theorem \ref{mta-gene} with $s=3$ and Theorem \ref{teo-loc-nlh}. In details, for every $\psi_0\in H^3(\T,\R)$ strictly positive, Theorem \ref{mta-gene} allows defining a dynamics steering $\psi_0$ close to $\Phi$ in the $H^3$ metric in an arbitrarily small time. If we are close enough to $\Phi$, we can use the local controllability result of Theorem \ref{teo-loc-nlh}. Thus, the solution of \eqref{0.1} reaches the target $\Phi$ in any finite time.
\end{proof}

\begin{proof}[Proof of Main Theorem B]
Main Theorem B is a direct consequence of Theorem \ref{mtb-gene} and the density property of $\mathcal{H}_\infty$ in $H^1(\T,\R)$ ensured by Proposition \ref{P:density}.
\end{proof}

\section*{Appendices}
\begin{appendices}
\section{Proof of Proposition \ref{well-pos}}\label{AppA}

In this section, we shall prove the existence and uniqueness of solutions  of \eqref{0.1}.

\begin{proof}{$\mathbf (1)$ {\bf Existence and uniqueness of solutions.}}     Let $s>d/2$ and $\psi_0\in H^s(\T^d,\R)$. For the sake of shortness, we consider $\kappa=1$. However, the proof remains valid in the general case. We are going to show that there exists $t_1>0$ such that the Cauchy problem \eqref{0.1} admits a unique solution $\psi\in C^0([0,t_1],H^s(\T^d,\R))$.  We define the following quantities
    \begin{equation*}
        M:=\sup\{\norm{e^{t\Delta}-I}_{\mathcal{L}(H^s(\T^d,\R))},\,0\leq t\leq 1\},\ \ \ \  \ \ \        r(\psi_0):=2M\norm{\psi_0}_{H^s},
    \end{equation*}
    \begin{equation*}
        C(Q):=\max_{1\leq i \leq d }\norm{Q_i}_{H^s},\qquad C(d,u,Q):=\sqrt{2d}\norm{u}_{L^2(0,1)}C(Q).
    \end{equation*}
    Observe that, since $s>d/2$, we deduce that the embedding $H^s(\T^d,\R)\hookrightarrow C^0(\T^d,\R) $ is continuous, that is, there exists a constant $C(\T^d)$ such that
    \begin{equation}
        \sup_{x\in\T^d}|y(x)|\leq C(\T^d)\norm{y}_{H^s}, \quad \forall\,y\in H^s(\T^d,\R).
    \end{equation}
    We now define
    \begin{equation}
        \delta:=\min\left\{1,\frac{(r(\psi_0))^2}{4C(\T^d)^2\left(C(d,u,Q)(r(\psi_0)+\norm{\psi_0}_{H^s})+2^p((r(\psi_0))^{p+1}+\norm{\psi_0}^{p+1}_{H^s})\right)^2}\right\},
    \end{equation}
    and set $t_1=\delta$.
    
    We denote $B:=B_{C^0([0,t_1],H^s))}(\psi_0,r(\psi_0))$ the ball in the space $C^0([0,t_1],H^s(\T^d,\R))$ of center $\psi_0$ and radius $r(\psi_0)$. For every $\psi\in B$ we define the following function
    \begin{equation}
        \Phi(\psi)(t):=e^{t\Delta}\psi_0+\int_0^te^{(t-s)\Delta}\left(\langle u(s), Q(x)\rangle \psi(s,x)+ \psi(s,x)^{p+1}\right)ds. 
    \end{equation}
    Let us show that $\Phi$ maps $B$ into itself:
    \begin{equation*}
        \begin{split}
            \norm{\Phi(\psi)(t)-\psi_0}_{H^s }&\leq \norm{e^{t\Delta}\psi_0-\psi_0}_{H^s }\quad+\norm{\int_0^te^{(t-s)\Delta}\left(\langle u(s), Q\rangle \psi(s)+\psi(s)^{p+1}\right)ds}_{H^s }\\
            &\leq M\norm{\psi_0}_{H^s }+\int_0^t\norm{\langle u(s), Q\rangle\psi(s)}_{H^s }+\norm{\psi(s)^{p+1}}_{H^s }ds\\
            &\leq M\norm{\psi_0}_{H^s }+C(\T^d)\left(C(Q)\int_0^t \sum_{i=1}^q|u_i(s)|\norm{\psi(s)}_{H^s }ds+\int_0^t\norm{\psi(s)}_{H^s }^{p+1}ds\right)\\
            &\leq M\norm{\psi_0}_{H^s } +C(\T^d)\sqrt{d}C(Q)\left(\int_0^t \sum_{i=1}^q|u_i(s)|^2ds\right)^{1/2}\left(2\int_0^t\norm{\psi(s)-\psi_0}^2_{H^s }+\norm{\psi_0}^2_{H^s }ds\right)^{1/2}\\
            &\quad\quad+C(\T^d)2^p\int_0^t\norm{\psi(s)-\psi_0}_{H^s }^{p+1}+\norm{\psi_0}^{p+1}_{H^s }ds.
        \end{split}
    \end{equation*}
    Thus, we deduce that $\Phi(\psi)\in B$ since
    \begin{multline*}
        \sup_{t\in[0,t_1]}\norm{\Phi(\psi)(t)-\psi_0}_{H^s }\leq \frac{r(\psi_0)}{2}+C(\T^d)\sqrt{2d}C(Q)\norm{u}_{L^2(0,1)}\left(\sup_{t\in[0,t_1]}\norm{\psi(t)-\psi_0}_{H^s }+\norm{\psi_0}_{H^s }\right)\sqrt{t_1}\\
        +C(\T^d)2^p\left(\sup_{t\in[0,t_1]}\norm{\psi(t)-\psi_0}_{H^s }^{p+1}+\norm{\psi_0}^{p+1}_{H^s }\right)\sqrt{t_1}\leq \frac{r(\psi_0)}{2}+\frac{r(\psi_0)}{2}=r(\psi_0).
    \end{multline*}
    Now, we show that $\Phi$ is a contraction over $B$. Let $\psi,\phi\in B$, then
    \begin{equation*}
        \begin{split}
            \norm{\Phi(\psi)(t)-\Phi(\phi)(t)}_{H^s}&=\norm{\int_0^t \langle u(s),Q(x)\rangle(\psi(s)-\phi(s))+\psi(s)^{p+1}-\phi(s)^{p+1}ds}_{H^s}\\
            &\leq \int_0^t \norm{\langle u(s),Q(x)\rangle(\psi(s)-\phi(s))}_{H^s}ds\\
            &\quad+C(\T^d)\int_0^t\norm{\psi(s)-\phi(s)}_{H^s}\sum_{j=0}^p\norm{\psi(s)}^j_{H^s}\norm{\phi(s)}^{p-j}_{H^s} ds\\
            &\leq C(Q)\sqrt{2d}\norm{u}_{L^2((0,1),\R^{q+2})}\left(\int_0^t\norm{\psi(s)-\phi(s)}^2_{H^s}ds\right)^{1/2}\\
            &\quad+C(\T^d)2^{p-2}\int_0^t\norm{\psi(s)-\phi(s)}_{H^s}L(\psi,\phi)ds,
        \end{split}
    \end{equation*}
where
\begin{equation*}
    L(\psi,\phi):=\sum_{j=0}^p\left(\norm{\psi(s)-\psi_0}^j_{H^s}+\norm{\psi_0}^j_{H^s}\right)\left(\norm{\phi(s)-\psi_0}^{p-j}_{H^s}+\norm{\psi_0}^{p-j}_{H^s}\right) .
\end{equation*}
    Therefore, we get that
    \begin{equation*}
        \begin{split}
            \sup_{0\leq t\leq t_1}\norm{\Phi(\psi)(t)-\Phi(\phi)(t)}_{H^s}&\leq C(Q)\sqrt{2d}\norm{u}_{L^2((0,1)}\sqrt{t_1}\sup_{0\leq t\leq t_1}\norm{\psi(t)-\phi(t)}_{H^s}\\
            &\quad+C(\T^d)2^{p-2}\sqrt{t_1}\sup_{0\leq t \leq t_1}L(\psi,\phi)\sup_{0\leq t \leq t_1}\norm{\psi(t)-\phi(t)}_{H^s}\\
            &\leq \left(C(d,u,Q)+C(\T^d)2^{p-2}\tilde L(\psi_0)\right)\sqrt{t_1}\sup_{0\leq t \leq t_1}\norm{\psi(t)-\phi(t)}_{H^s},
        \end{split}
    \end{equation*}
where 
\begin{equation*}
    \tilde L(\psi_0):=\sum_{j=0}^p \left(r(\psi_0)^j+\norm{\psi_0}^j_{H^s}\right)\left(r(\psi_0)^{p-j}+\norm{\psi_0}^{p-j}_{H^s}\right).
\end{equation*}
With the same kind of computation one can prove that
\begin{equation*}
        \sup_{0\leq t\leq t_1}\norm{\Phi^2(\psi)(t)-\Phi^2(\phi)(t)}_{H^s}\leq \left(C(d,u,Q)+C(\T^d)2^{p-2}\tilde L(\psi_0)\right)^2\frac{(\sqrt{t_1})^2}{\sqrt{2}}\sup_{0\leq t\leq t_1}\norm{\psi(t)-\phi(t)}_{H^s},
\end{equation*} 
and, iterating the procedure, one shows that
\begin{equation*}
        \sup_{0\leq t\leq t_1}\norm{\Phi^n(\psi)(t)-\Phi^n(\phi)(t)}_{H^s}\leq\left(C(d,u,Q)+C(\T^d)2^{p-2}\tilde L(\psi_0)\right)^n\frac{(\sqrt{t_1})^n}{\sqrt{n!}}\sup_{0\leq t\leq t_1}\norm{\psi(t)-\phi(t)}_{H^s}.
\end{equation*} 
For $n$ large enough, it holds that $$\left(C(d,u,Q)+C(\T^d)2^{p-2}\tilde L(\psi_0)\right)^n\frac{(\sqrt{t_1})^n}{\sqrt{n!}}<1.$$
Hence, form a well-known corollary of the Banach fixed point Theorem, we deduce that $\Phi$ is a contraction over $B$ and so there exists a unique fixed point $\psi\in B$ which is the solution of \eqref{0.1}. Furthermore, it holds that 

\begin{equation*}
    \sup_{t\in[0,t_1]}\norm{\psi(t)}_{H^s}\leq (2M+1)\norm{\psi_0}_{H^s}.
\end{equation*}
We have just proved that if $\psi$ is solution  of \eqref{0.1} in $[0,\tau]$, then we can extend it into $[0,\tau+\delta(\tau)]$. Indeed, by defining the quantities 
$$M(\tau):=\sup_{\tau\leq t\leq \tau + 1}\left\{\norm{e^{t\Delta}-I}_{\mathcal{L}(H^s(\T^d,\R))}\right\} ,\quad  r(\tau,\psi(\tau)):=2M(\tau)\norm{\psi(\tau)}_{H^s},\quad C(d,u,Q):=\sqrt{2d}\norm{u}_{L^2(\tau,\tau+1)}C(Q),$$
and
\begin{equation*}
    \delta(\tau):=\min\left\{1,\frac{(r(\psi(\tau))^2}{4C(\T^d)^2\left(C(d,u,Q)(r(\psi(\tau))+\norm{\psi(\tau)}_{H^s})+2^p((r(\psi(\tau)))^{p+1}+\norm{\psi(\tau)}^{p+1}_{H^s})\right)^2}\right\},
\end{equation*}
    then one just sets $\psi(t)=\zeta(t)$ for $t\in[\tau,\tau+\delta(\tau)]$, where
    \begin{equation*}
        \zeta(t)=e^{(t-\tau)\Delta}\psi(\tau)+\int_\tau^te^{(t-s)\Delta}\left(\langle u(s), Q(x)\rangle \psi(s,x)+ \psi(s,x)^{p+1 }\right)ds. 
    \end{equation*}
    Let $[0,\TT)$ be the maximal interval of existence of $\psi$, solution  of the \eqref{0.1}. If $\TT<+\infty$, then $\norm{\psi(t)}_{H^s}\to+\infty$ as $t\to\TT^-$, otherwise $\psi$ could be extended, which contradicts the maximality of $\TT$. Observe that, for any $0<T<\TT$,
    \begin{equation}\label{eq-bound-solution}
        \sup_{t\in[0,T]}\norm{\psi(t)}_{H^s}\leq C \norm{\psi_0}_{H^s}.
    \end{equation}

\smallskip

\noindent
{$\mathbf (2)$ {\bf Proof of the continuity} \eqref{psi-phi}  {\bf and the stability} \eqref{stability}.} Let $\psi,\phi\in C^0([0,T],H^s(\T^d,\R))$, with $0\leq T\leq \min\{\TT(\psi_0),\TT(\phi_0)\}$, be the solutions  of the \eqref{0.1} corresponding to the initial conditions $\psi_0$ and $\phi_0$ and controls $u$ and $v$, respectively. Then, 
    \begin{equation*}
        \begin{split}
            \norm{\psi(t)-\phi(t)}_{H^s}&\leq \norm{\psi_0-\phi_0}_{H^s}\\
            &\quad+\int_0^t(\norm{\langle u(s),Q\rangle\psi(s)-\langle v(t),Q\rangle \phi(s)}_{H^s}+\norm{\psi(s)^{p+1}-\phi(s)^{p+1}}_{H^s})ds\\
            &\leq \norm{\psi_0-\phi_0}_{H^s}+ \int_0^t \norm{\langle u(s),Q\rangle (\psi(s)-\phi(s))}_{H^s}ds+\\
            &\quad+\int_0^t \norm{(\langle u(s),Q\rangle-\langle v(s),Q\rangle )\phi(s)}_{H^s}ds\\
            &\quad+C(\T^d)\int_0^t\norm{\psi(s)-\phi(s)}_{H^s}\sum_{j=0}^p\norm{\psi(s)}^j_{H^s}\norm{\phi(s)}^{p-j}_{H^s} ds\\
            &\leq \norm{\psi_0-\phi_0}_{H^s} +C(Q)\sqrt{2d}\norm{u}_{L^2(0,t)}\sqrt{t}\sup_{0\leq s\leq t}\norm{\psi(s)-\phi(s)}_{H^s}\\
            &\quad+ C(Q)\sqrt{2d}\norm{u-v}_{L^2(0,t)}\sqrt{t}\sup_{0\leq s \leq t}\norm{\phi(s)}_{H^s}\\
            &\quad+C(\T^d)\sqrt{t}\left(\sum_{j=0}^p\sup_{0\leq s \leq t}\norm{\psi(s)}^j_{H^s}\sup_{0\leq s \leq t}\norm{\phi(s)}^{p-j}_{H^s}\right) \sup_{0\leq s \leq t} \norm{\psi(s)-\phi(s)}_{H^s}.
        \end{split}
    \end{equation*}
    Therefore, we get the existence of $C_1,C_2,C_3>0$, which depend only on the parameters of the problem, such that
    \begin{equation*}
        \begin{split}
            \sup_{0\leq t \leq T}\norm{\psi(t)-\phi(t)}_{H^s}&\leq \norm{\psi_0-\phi_0}_{H^s}+ \left(C(Q)\sqrt{2d}\sqrt{T}\sup_{0\leq t \leq T}\norm{\phi(t)}_{H^s}\right)\norm{u-v}_{L^2(0,T)}\\
            &\quad+\left(C(d,u,Q)+C(\T^d)\sum_{j=0}^p\sup_{0\leq t \leq T}\norm{\psi(t)}^j_{H^s}\sup_{0\leq t \leq T}\norm{\phi(t)}^{p-j}_{H^s}\right)\sqrt{T} \sup_{0\leq t \leq T} \norm{\psi(t)-\phi(t)}_{H^s}\\
            &\leq \norm{\psi_0-\phi_0}_{H^s}+C_1R\sqrt{T}\norm{u-v}_{L^2(0,T)}+(C_2+C_3R^p)\sqrt{T}\sup_{0\leq t \leq T} \norm{\psi(t)-\phi(t)}_{H^s},
        \end{split}
    \end{equation*}
    where we have used \eqref{eq-bound-solution} and that $\psi_0,\phi_0\in B_{H^s}(0,R)$. If $(C_2+C_3R^p)\sqrt{T}<1$, then we obtain the validity of \eqref{psi-phi} and then 
    \begin{equation*}
        \sup_{0\leq t \leq T}\norm{\psi(t)-\phi(t)}_{H^s}\leq C\left( \norm{\psi_0-\phi_0}_{H^s}+\norm{u-v}_{L^2(0,T)}\right).
    \end{equation*}
    Otherwise, we can subdivide the interval $[0,T]$ into subintervals where inequality $(C_2+C_3R^p)\sqrt{T}<1$ holds and obtain the result. Finally, thanks to \eqref{psi-phi} it is possible to prove \eqref{stability} (see also the proof of \cite[Proposition 2.1]{DN-local-exact}).
\end{proof}

\section{Global well-posedness and proof of Proposition \ref{thm-global-well-pos}}\label{AppB}

This appendix aims to prove Proposition \ref{thm-global-well-pos}. We show that for more regular initial conditions, $\psi_0\in H^3(\T,\R)$, the solution of the one-dimensional problem ($d=1$) \eqref{0.1} is global in time, that is, the maximum time of existence is $\TT(\psi_0)=+\infty$. First, we observe that if $\psi_0\in H^1(\T,\R)$ then, from Proposition \ref{well-pos}, there exists $\TT(\psi_0)$ such that for every $0<T <\TT(\psi_0)$ there exists a unique solution $\psi\in C^0([0,T],H^1(\T,\R))$. Second, if $\mathcal{T}(\psi_0)=+\infty$ then the solution is global in time, otherwise
\begin{equation*}
\norm{\psi(t)}_{H^1 }\to +\infty\qquad\text{as }t\to\mathcal{T}(\psi_0).
\end{equation*}
It can be proved, thanks to the analyticity of the semigroup generated by $\Delta$ and applying \cite[Theorem 3.1, p. 143]{DaPrato} to the fixed point argument used in the proof of Proposition \ref{well-pos}, that
\begin{equation*}
    \psi\in H^1((0,T),L^2 (\T,\R))\cap L^2((0,T),H^2(\T,\R) ).
\end{equation*}
Let us consider $\Psi:=\psi_t$ that formally solves the problem
\begin{equation}\label{eq-Psi}
\begin{cases}
\partial_t \Psi-\Delta\Psi+\kappa(p+1)\psi^{p}\Psi=\langle u'(t),Q(x)\rangle\psi+\langle u(t),Q(x)\rangle \Psi\\
\Psi(0)=\Psi_0:=\Delta \psi_0-\kappa\,\psi_0^{p+1}+\langle u(0),Q(x)\rangle \psi_0.
\end{cases}
\end{equation}
Let us now prove the following result which is necessary for the proof of Proposition \ref{thm-global-well-pos}.

\begin{proposition}\label{prop-reg-psi-t}
Let $\psi_0\in H^3(\T,\R)$, $Q\in H^3(\T,\R^{q+2})$, $u\in H^1_{\text{loc}}((0,+\infty),\R^{q+2})$, $0<T<\TT(\psi_0,u)$ and $\kappa\geq0$. Then, there exists a unique solution $\Psi\in L^2((0,T),H^1(\T,\R))$ of \eqref{eq-Psi} given by
\begin{equation*}
\Psi(t)=e^{t\Delta}\Psi_0+\int_0^t e^{(t-s)\Delta}\left(\langle u'(s),Q(x)\rangle\psi(s)+\langle u(s),Q(x)\rangle \Psi(s)-\kappa(p+1)\psi(s)^{p}\Psi(s)\right)ds,
\end{equation*} 
where $\psi$ is the unique solution of \eqref{0.1} with initial condition $\psi_0$. Furthermore, it holds that $\Psi=\psi_t$.
\end{proposition}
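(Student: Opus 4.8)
The plan is to separate the statement into two parts: (1) existence and uniqueness of a mild solution $\Psi$ of the linear problem \eqref{eq-Psi}, and (2) the identification $\Psi=\psi_t$. Throughout I use that, since $\psi_0\in H^3(\T,\R)$, $Q\in H^3$ and $0<T<\TT(\psi_0,u)$, Proposition \ref{well-pos} with $s=3>d/2$ gives $\psi\in C^0([0,T],H^3(\T,\R))$, and that in dimension $d=1$ the space $H^1(\T,\R)$ is a Banach algebra.

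For part (1), observe that \eqref{eq-Psi} is \emph{linear} in $\Psi$, so I would run a Duhamel/Banach fixed point argument parallel to (but lighter than) that of Proposition \ref{well-pos}, considering on $C^0([0,T'],H^1)$ the affine map
\[
\Theta(\Phi)(t)=e^{t\Delta}\Psi_0+\int_0^t e^{(t-s)\Delta}\Big(\lag u'(s),Q\rag\psi(s)+\lag u(s),Q\rag\Phi(s)-\kappa(p+1)\psi(s)^p\Phi(s)\Big)\dd s .
\]
First I would check that $\Psi_0=\Delta\psi_0-\kappa\psi_0^{p+1}+\lag u(0),Q\rag\psi_0\in H^1$ (using $\psi_0\in H^3$ so $\Delta\psi_0\in H^1$, and $u(0)$ well-defined by the embedding $H^1_{loc}\hookrightarrow C^0$), and that the forcing $\lag u',Q\rag\psi$ lies in $L^2((0,T),H^1)$ because $u'\in L^2$ and $Q,\psi\in H^3\subset H^1$. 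The multiplier coefficients $\lag u(s),Q\rag$ and $\kappa(p+1)\psi(s)^p$ are bounded in $H^1$ uniformly in $s$; combining the uniform boundedness of $e^{t\Delta}$ on $H^1$ with the smoothing estimate $\norm{\int_0^t e^{(t-s)\Delta}g\,\dd s}_{H^1}\le\sqrt t\,\norm{g}_{L^2((0,t),H^1)}$ makes $\Theta$ a contraction for $T'$ small, and linearity lets me iterate over $[0,T]$ in finitely many steps. The same estimate shows the fixed point $\Psi$ actually belongs to $C^0([0,T],H^1)\subset L^2((0,T),H^1)$ and satisfies the stated variation of constants formula.

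For part (2), I would proceed by difference quotients. For small $h>0$ set $z_h(t):=(\psi(t+h)-\psi(t))/h$ and $D_hu(s):=(u(s+h)-u(s))/h$ on $[0,T-h]$. Since $\psi(\cdot+h)$ is the mild solution of \eqref{0.1} with datum $\psi(h)$ and control $u(\cdot+h)$, subtracting the two Duhamel formulas and dividing by $h$ yields an integral equation for $z_h$ whose data converge to those of $\Psi$ as $h\to0^+$: the initial term $(\psi(h)-\psi_0)/h\to\Psi_0$ in $H^1$ (this is precisely where the $H^3$ regularity enters, since it guarantees $(e^{h\Delta}\psi_0-\psi_0)/h\to\Delta\psi_0$ in $H^1$); the forcing $\lag D_hu,Q\rag\psi(\cdot+h)\to\lag u',Q\rag\psi$ in $L^2((0,T'),H^1)$, using $D_hu\to u'$ in $L^2$ and $\psi(\cdot+h)\to\psi$ in $C^0([0,T'],H^3)$; and the nonlinear difference quotient $(\psi(s+h)^{p+1}-\psi(s)^{p+1})/h=\big(\sum_{j=0}^p\psi(s+h)^j\psi(s)^{p-j}\big)z_h(s)$, whose prefactor tends uniformly to $(p+1)\psi^p$. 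Writing the integral equation for $r_h:=z_h-\Psi$, the contributions linear in $r_h$ are closed off by a Gronwall argument (bounded $H^1$ multipliers against the smoothing integral), while the remaining source terms tend to $0$ in $C^0([0,T'],H^1)$ by the convergences above. Hence $z_h\to\Psi$ in $C^0([0,T'],H^1)$ for every $T'<T$, which gives $\psi_t=\Psi$; continuity of $\Psi$ then upgrades $\psi$ to a genuinely $C^1$-in-time function.

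The hard part is clearly part (2), and within it the forcing term $\lag u',Q\rag\psi$: since $u'\in L^2$ only, the difference quotient $D_hu$ converges to $u'$ merely in $L^2$-in-time, so the limit cannot be taken pointwise in $t$ and must go through the $L^2\to C^0$ smoothing estimate for the Duhamel integral coupled to the Gronwall scheme. The second, more technical, point is verifying the initial-layer convergence $(\psi(h)-\psi_0)/h\to\Psi_0$ in $H^1$, which is exactly what forces the hypothesis $\psi_0\in H^3$ rather than merely $H^1$.
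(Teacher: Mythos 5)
Your proposal is correct and follows essentially the same two-step route as the paper: a Banach fixed-point argument for the linear problem \eqref{eq-Psi} on $C^0([0,T'],H^1)$ (the paper closes the mapping property via maximal regularity of the heat semigroup, citing \cite{DaPrato} and \cite{Lions}, where you use a direct $L^2\to C^0$ smoothing estimate with $H^1$-valued forcing), followed by a difference-quotient argument with Gr\"onwall's lemma to identify $\Psi=\psi_t$. Your observations that $\psi_0\in H^3$ is needed precisely for $(e^{h\Delta}\psi_0-\psi_0)/h\to\Delta\psi_0$ in $H^1$, and that $u'\in L^2$ forces the forcing term to be handled in an integrated rather than pointwise sense, match the role these hypotheses play in the paper's proof.
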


\begin{proof}{$\mathbf (1)$ {\bf Existence and uniqueness of solutions.}} We already know from the local well-posedness of \eqref{0.1} that $\Psi=\psi_t\in L^2((0,T),L^2(\T,\R))$. We now consider equation \eqref{eq-Psi}. In order to apply a fix point argument, for every $\xi\in C([0,T],H^1(\T,\R))$, we consider the following map
\begin{equation*}
\Phi(\xi)(t):=e^{t\Delta}\Psi_0+\int_0^t e^{(t-s)\Delta}\Big(\langle u'(s),Q(x)\rangle\psi(s)+\langle u(s),Q(x)\rangle \xi(s)-\kappa(p+1)\psi(s)^{p}\xi(s)\Big)ds,
\end{equation*}
where $\psi\in H^1((0,T),L^2(\T,\R))\cap L^2((0,T),H^2(\T,\R))\cap C([0,T],H^3(\T,\R))$ is the solution of \eqref{0.1} with initial condition $\psi_0\in H^3(\T,\R)$. We first observe that $\Psi_0=\Delta \psi_0-\kappa \psi_0^p\psi_0+\langle u(0),Q(x)\rangle \psi_0$ is well-defined and is in $H^1(\T,\R)$. Let us first prove that $\Phi$ maps  $C([0,T],H^1(\T,\R))$ into itself. Since 
\begin{equation*}
f(\cdot):=\langle u'(\cdot),Q\rangle\psi+\langle u(\cdot),Q\rangle \xi(\cdot)-\kappa(p+1) \psi(\cdot)^{p}\xi(\cdot)\in L^2((0,T),L^2(\T,\R)),
\end{equation*}
from \cite[Theorem 3.1, p. 143]{DaPrato} we deduce that $\Phi(\xi)\in  H^1([0,T],L^2(\T,\R))\cap L^2((0,T),H^2(\T,\R))$ for every $\xi\in C([0,T],H^1(\T,\R))$. Thus, we deduce from \cite[Proposition 2.1, p. 22 and Theorem 3.1, p. 23]{Lions} that $\Phi$ maps $C([0,T],H^1(\T,\R))$ into itself. We now prove that $\Phi$ is a contraction. Let $\xi,\tilde{\xi}\in C([0,T],H^1(\T,\R))$. Then,
\begin{equation*}
\begin{split}
&\sup_{t\in[0,T]}\norm{\Phi(\xi)(t)-\Phi(\tT{\xi})}_{H^1}=\sup_{t\in[0,T]}\norm{\int_0^t e^{(t-s)\Delta}\left(\langle u(s),Q(x)\rangle (\xi(s)-\tT{\xi}(s))-\kappa(p+1)\psi(s)^{p}(\xi(s)-\tT{\xi}(s))\right)ds}_{H^1}\\
&\quad\leq C\norm{\left(\langle u(\cdot),Q\rangle-\kappa(p+1)\psi(s)^{p}\right) (\xi(\cdot)-\tT{\xi}(\cdot))}_{L^2((0,T),L^2(\T))}\leq C\left(\int_0^t\norm{\xi(s)-\tT{\xi}(s)}_{L^2}^2ds\right)^{1/2}\\
&\quad\leq C\sqrt{T}\sup_{t\in[0,T]}\norm{\xi(t)-\tT{\xi}(t)}_{L^2}\leq C\sqrt{T}\sup_{t\in[0,T]}\norm{\xi(t)-\tT{\xi}(t)}_{H^1}.
\end{split}
\end{equation*}
If $C\sqrt{T}<1$ then $\Phi$ is a contraction. Otherwise, one can divide the interval $[0,T]$ in a finite number of sub-intervals where $\Phi$ is a contraction and conclude the argument. So, we deduce that $\Phi$ admits a unique fix point $\Psi$ in the space $C([0,T],H^1(\T,\R))$ which is the solution of \eqref{eq-Psi}. 

\smallskip

{$\mathbf (1)$ {\bf Proof of the identity $\Psi=\psi_t$.}} We now prove that the unique solution of \eqref{eq-Psi} is indeed $\psi_t$. Let $t\in[0,T]$, $\tau>0$ such that $0<t+\tau<T$, and consider the difference
\begin{equation*}
\frac{\psi(t+\tau)-\psi(t)}{\tau}-\Psi(t).
\end{equation*}
Our aim is to prove that the above quantity converges to $0$ as $\tau\to0$. Thanks to the expression of $\psi$ and $\Psi$ as mild solutions, we get
\begin{equation*}
\begin{split}
&\frac{\psi(t+\tau)-\psi(t)}{\tau}-\Psi(t)=\frac{1}{\tau}\left\{e^{(t+\tau)\Delta}\psi_0-e^{t\Delta}\psi_0\right\}+\frac{1}{\tau}\int_0^\tau e^{(t+\tau-s)\Delta}\left[-\kappa\,\psi(s)^{p+1}+\langle u(s),Q\rangle\psi(s)\right]ds\\
&\qquad+\frac{1}{\tau}\int_0^t e^{(t-s)\Delta}\left[-\kappa\,\psi(s+\tau)^{p+1}+\langle u(s+\tau),Q\rangle\psi(s+\tau)+\kappa\,\psi(s)^{p+1}-\langle u(s),Q\rangle\psi(s)\right]ds\\
&\qquad-e^{t\Delta}(\Delta \psi_0-\kappa\,\psi_0^{p+1}+\langle u(0),Q\rangle\psi_0)-\int_0^t e^{(t-s)\Delta}\left(\langle u'(s),Q(x)\rangle\psi(s)+\langle u(s),Q(x)\rangle \xi(s)-\kappa(p+1)\psi(s)^{p}\xi(s)\right)ds\\
&\quad=\frac{1}{\tau}\left\{e^{(t+\tau)\Delta}\psi_0-e^{t\Delta}\psi_0\right\}-e^{t\Delta}\psi_0-e^{t\Delta}(-\kappa\,\psi_0^{p+1}+\langle u(0),Q\rangle\psi_0))\\
&\qquad+\frac{1}{\tau}\int_0^\tau e^{(t+\tau-s)\Delta}\left[-\kappa\,\psi(s)^{p+1}+\langle u(s),Q\rangle\psi(s)\right]ds+\int_0^te^{(t-s)\Delta}\left[-\kappa\,\psi(s+\tau)^p\left(\frac{\psi(s+\tau)-\psi(s)}{\tau}-\Psi(s)\right)\right]ds\\
&\qquad+\int_0^te^{(t-s)\Delta}\left[-\kappa\left[p\,\psi(s)^{p } \left(\frac{\psi(s+\tau)-\psi(s)}{\tau}-\Psi(s)\right)+o\left(\frac{\psi(s+\tau)-\psi(s)}{\tau}\right)\right]\right]ds\\
&\qquad+\int_0^t e^{(t-s)\Delta}\left[\langle \frac{u(s+\tau)-u(s)}{\tau},Q\rangle \psi(s+\tau)+\langle u(s+\tau),Q\rangle \left(\frac{\psi(s+\tau)-\psi(s)}{\tau}-\Psi(s)\right)\right]ds\\
&\qquad+\int_0^t e^{(t-s)\Delta}\left[-\kappa\,\psi(s+\tau)^p\Psi(s)-p\kappa\,\psi(s)^p\Psi(s)+\langle u(s+\tau),Q\rangle\Psi(s)\right]ds\\
&\qquad-\int_0^t e^{(t-s)\Delta}\left(\langle u'(s),Q(x)\rangle\psi(s)+\langle u(s),Q(x)\rangle \Psi(s)-\kappa(p+1)\psi(s)^{p}\Psi(s)\right)ds.
\end{split}
\end{equation*}
Now, by taking the absolute value of the above identity, we obtain
\begin{equation*}
\begin{split}
&\left|\frac{\psi(t+\tau)-\psi(t)}{\tau}-\Psi(t)\right|\leq \left|\frac{1}{\tau}\left\{e^{(t+\tau)\Delta}\psi_0-e^{t\Delta}\psi_0\right\}-e^{t\Delta}\psi_0\right|\\
&\qquad +\left|\frac{1}{\tau}\int_0^\tau e^{(t+\tau-s)\Delta}\left[-\kappa \psi(s)^{p+1}+\langle u(s),Q\rangle\psi(s)\right]ds-e^{t\Delta}(-\kappa\,\psi_0^{p+1}+\langle u(0),Q\rangle\psi_0))\right|\\
&\qquad+\int_0^t\Big|e^{(t-s)\Delta}\Big[\langle \frac{u(s+\tau)-u(s)}{\tau},Q\rangle \psi(s+\tau)-\langle u'(s),Q(x)\rangle\psi(s)-\kappa\,\psi(s+\tau)^p\Psi(s)\\
&\qquad\qquad-p\kappa\,\psi(s)^p\Psi(s)+\langle u(s+\tau),Q\rangle\Psi(s)-\langle u(s),Q(x)\rangle \Psi(s)+\kappa(p+1)\psi(s)^{p}\Psi(s)\Big]\Big|ds\\
&\qquad +\int_0^t \left|-\kappa\,\psi(s+\tau)^p-\kappa p\,\psi(s)^p+\langle u(s+\tau),Q\rangle\right|\left|\frac{\psi(s+\tau)-\psi(s)}{\tau}-\Psi(s)\right|ds\\
&\qquad+\int_0^t\left|e^{(t-s)\Delta}o\left(\frac{\psi(s+\tau)-\psi(s)}{\tau}\right)\right|ds,\\
\end{split}
\end{equation*}
which is of the type $f(t)\leq\alpha(t)+\int_0^t \beta(s)f(s)ds.$ We apply the Gr\"{o}nwall's Lemma, and we deduce that $
f(t)\leq \alpha(t)+\int_0^t\alpha(s)\beta(s)e^{\int_s^t\beta(r)dr}ds,$ and then, by taking the limit as $\tau\to 0$, we finally obtain that
\begin{equation*}
\left|\frac{\psi(t+\tau)-\psi(t)}{\tau}-\Psi(t)\right|\to 0. 
\end{equation*}
Hence, we have proved that the unique solution $\Psi$ of \eqref{eq-Psi} coincides with $\psi_t$.
\end{proof}

We are now ready to prove the global well-posedness of \eqref{0.1} with $d=1$ stated in Proposition \ref{thm-global-well-pos}.

\begin{proof}[Proof of Proposition \ref{thm-global-well-pos}]
From the local well-posedness result, we know that for any $\psi_0\in H^1(\T,\R)$ there exists $\TT (\psi_0,u)>0$ such that for any $0<t< \TT(\psi_0,u)$ there exists a unique solution $\psi\in H^1((0,t),L^2(\T,\R))\cap L^2((0,t),H^2(\T,\R))\cap C([0,t],H^1(\T,\R))$ of problem \eqref{0.1}. Furthermore, if $\TT(\psi_0,u)<+\infty$, then $\norm{\psi(t)}_{H^1}\to +\infty$ as $t\to\TT(\psi_0,u).$ We shall prove that
\begin{equation*}
\norm{\psi(t)}_{H^1}\leq C\qquad\text{as }t\to\TT(\psi_0,u),
\end{equation*}
and we would deduce that $\TT(\psi_0,u)=+\infty$, that is, the solution is globally well-defined. We recall that $\norm{\psi}^2_{H^1}:=\norm{\psi}_{L^2}^2+\norm{\partial_x \psi}_{L^2}^2.$ For almost every $0<t<\mathcal{T}(\psi_0,u)$, we multiply the equation in \eqref{0.1} by $\psi$ and we obtain
\begin{equation*}
\langle \partial_t \psi,\psi\rangle_{L^2}-\langle\Delta\psi,\psi\rangle_{L^2}+\kappa\langle \psi^{p+1},\psi\rangle_{L^2}= \big\langle\langle u(t),Q(x)\rangle\psi,\psi\big\rangle_{L^2}.
\end{equation*}
Recalling that $\kappa\geq0$ and $p\in 2\N$, we get $\kappa\langle \psi^{p+1},\psi\rangle_{L^2}\geq0$ and then $ \frac{1}{2}\frac{d}{dt}\norm{\psi(t)}_{L^2}^2\leq C(t) \norm{\psi(t)}^2_{L^2}$ thanks to the accretivity of $-\Delta$. Therefore, for a.e. $t\in (0,\TT(\psi_0,u))$, we have the inequality
\begin{equation}\label{eq-norm-2-psi}
\norm{\psi(t)}^2_{L^2}\leq \norm{\psi_0}^2_{L^2}e^{2\int_0^t C(s)ds}\leq \norm{\psi_0}^2_{L^2}e^{2C\sqrt{\TT(\psi_0,u)}\norm{u}_{L^2(0,\TT(\psi_0,u))}}. 
\end{equation}
Now we multiply equation \eqref{0.1} by $-\Delta\psi$ and we get 
\begin{equation}\label{eqB3}
-\langle \partial_t \psi,\Delta\psi\rangle_{L^2}+\langle\Delta\psi,\Delta\psi\rangle_{L^2}-\kappa\langle\psi^{p+1},\Delta\psi\rangle_{L^2}=- \big\langle\langle u(t),Q(x)\rangle\psi,\Delta\psi\big\rangle_{L^2}.
\end{equation}
Let us observe that thanks to the properties of $\Delta$, for every $\xi \in H^2(\T)$, it holds that
    \begin{equation*}
         t\mapsto \langle\Delta\xi(t),\xi(t)\rangle_{L^2}
    \end{equation*}
    is absolutely continuous and
    \begin{equation*}
         \frac{d}{dt}\langle \Delta\xi(t),\xi(t)\rangle_{L^2}=2\langle \partial_t \xi(t),\Delta \xi(t)\rangle_{L^2}.
    \end{equation*}
We rewrite \eqref{eqB3} in the equivalent form
\begin{equation*}
-\int_0^{2\pi} \partial_t \psi\Delta\psi dx+\int_0^{2\pi}(\Delta\psi)^2dx-\kappa\int_0^{2\pi}\psi^{p+1} \Delta\psi dx=-\int_0^{2\pi}\langle u(t),Q(x)\rangle \psi\Delta\psi dx
\end{equation*}
and, integrating by parts, we obtain
\begin{multline*}
-\left.\partial_t \psi\partial_x\psi\right|_0^{2\pi}+\int_0^{2\pi}\partial_x(\partial_t \psi)\partial_x\psi dx+\int_0^{2\pi}(\Delta\psi)^2dx-\left.\kappa\psi^{p+1}\partial_x\psi \right|_0^{2\pi}+\kappa (p+1)\int_0^{2\pi}\psi^{p}(\partial_x\psi)^2dx\\\leq C(t)\left|\left.\psi\partial_x\psi\right|_0^{2\pi}-\int_0^{2\pi}(\partial_x\psi)^2 dx\right|,
\end{multline*}
and, as above, we get an inequality of the form $
\int_0^{2\pi}\partial_x(\partial_t \psi)\partial_x\psi dx\leq C(t)\int_0^{2\pi}(\partial_x\psi)^2 dx$. This result has been obtained because the first integral on the left-hand side is well-defined since $\psi_t\in L^2((0,T),H^1(\T,\R))$. This is due to the fact that $\psi_0\in H^3(\T,\R)$ and thanks to Proposition \ref{prop-reg-psi-t}. Hence, we deduce that
\begin{equation*}
\frac{1}{2}\partial_t\norm{\partial_x\psi(t)}^2_{L^2}\leq C(t)\norm{\partial_x \psi_0}^2_{L^2},
\end{equation*}
and by the Gr{\"o}nwall's inequality we get
\begin{equation}\label{eq-norm-2-partialx-psi}
\norm{\partial_x\psi(t)}^2_{L^2}\leq\norm{\partial_x \psi_0}^2_{L^2}e^{2\int_0^t C(s)ds}\leq \norm{\partial_x \psi_0}^2_{L^2}e^{2C\sqrt{\TT(\psi_0,u)}\norm{u}_{L^2(0,\TT(\psi_0,u))}}.
\end{equation}
Therefore, by using \eqref{eq-norm-2-psi} and \eqref{eq-norm-2-partialx-psi}, we have proved that
\begin{equation*}
\norm{\psi(t)}^2_{H^1}=\norm{\psi(t)}^2_{L^2}+\norm{\partial_x \psi(t)}^2_{L^2}\leq (\norm{\psi_0}^2_{L^2}+\norm{\partial_x \psi_0}^2_{L^2})e^{2C\sqrt{\TT(\psi_0,u)}\norm{u}_{L^2(0,\TT(\psi_0,u))}},
\end{equation*}
for almost every $t\in(0,\TT(\psi_0,u))$. Since the right-hand side does not depend on $t$, we can take the limit as $t\to \TT(\psi_0,u)$ and conclude that $\norm{\psi(t)}^2_{H^1}\leq C$ as $t\to \TT(\psi_0,u).$
\end{proof}

\section{An estimate for the exact controllability to the ground state solution}\label{AppC}

In this section we derive an estimate for the solution $w$ of \eqref{w} which is useful for the proof of Theorem \ref{teo-loc-nlh}. Recall that $\Phi=1/\sqrt{2\pi}$. We shall take advantage of estimate \eqref{estim-y-yo-v1} that we have obtained for the solution of
\begin{equation}\label{app-eq-y-v1}
    \begin{cases}
        \partial_t y(t)-\partial^2_xy(t)+\kappa\displaystyle\sum_{j=2}^{p+1}\binom{p+1}{j}y^j(t)\Phi^{p+1-j}=\langle v^1(t),Q\rangle y(t)+\langle v^1(t),Q \rangle \Phi,&t\in(s_0,s_1),\\
        y(s_0)=y_{s_0}:=\psi(s_0)-\Phi.
    \end{cases}
\end{equation}

\begin{proposition}\label{app-prop-w}
    Let $\psi(s_0)\in H^3(\T,\R)$ and $v^1\in H^1((s_0,s_1),\R^{q+2})$ with $0\leq s_0<s_1$. Consider $Q\in H^3(\T,\R^{q+2})$ and $y\in C([s_0,s_1],H^3(\T,\R))\cap C^1([s_0,s_1],H^1(\T,\R))$ be the solution of \eqref{app-eq-y-v1}. Let $v^1$ satisfy \eqref{estim-v1}. Then, the solution $w\in C([s_0,s_1],H^1(\T,\R))\cap H^1([s_0,s_1],L^2(\T,\R))$ of 
    \begin{equation}\label{app-w}
        \begin{cases}
           \partial_t w(t)-\partial^2_x w(t)-\kappa(p+1)\Phi^pw(t)+\kappa\displaystyle\sum_{j=2}^{p+1}\binom{p+1}{j}\Phi^{p+1-j} y^j(t)=\langle v^1(t),Q\rangle y(t),&t\in(s_0,s_1),\\
            w(s_0)=0,
        \end{cases}
    \end{equation}
    satisfies the following inequality with $\sigma=s_1-s_0$ and $A_4(\sigma,\norm{y_{s_0}}_{H^1})$ defined in \eqref{app-A4}:
    \begin{equation}\label{app-estim-w}
        \sup_{t\in[s_0,s_1]}\norm{w(t)}_{H^1}\leq A_4(\sigma,\norm{y_{s_0}}_{H^1})\norm{y_{s_0}}^2_{H^1}.
    \end{equation}
    \end{proposition}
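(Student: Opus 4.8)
The plan is to treat \eqref{app-w} as a \emph{linear} inhomogeneous heat equation for $w$ with vanishing initial datum, whose source is at least \emph{quadratic} in $y$; this quadratic structure is precisely what produces the factor $\norm{y_{s_0}}_{H^1}^2$ in \eqref{app-estim-w}. Writing the equation as
\begin{equation*}
    \partial_t w-\partial_x^2 w-\kappa(p+1)\Phi^p w=f,\qquad w(s_0)=0,\qquad f:=\langle v^1,Q\rangle y-\kappa\sum_{j=2}^{p+1}\binom{p+1}{j}\Phi^{p+1-j}y^j,
\end{equation*}
I would first record the a priori control of $y$ in the energy space. Combining \eqref{estim-y-yo-v1} with the control bound \eqref{estim-v1}, i.e. $\norm{v^1}_{H^1(s_0,s_1)}\le N(\sigma)\norm{y_{s_0}}_{H^1}$, yields
\begin{equation*}
    \sup_{t\in[s_0,s_1]}\norm{y(t)}_{H^1}\le C\big(1+N(\sigma)\big)\norm{y_{s_0}}_{H^1}.
\end{equation*}
This is the only place where the specific structure of \eqref{app-eq-y-v1} enters.

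Next I would close an $H^1$ energy estimate for $w$ by Gr\"onwall. Testing the equation against $w$ and against $-\partial_x^2 w$ (the latter being licit since, by the analyticity of the heat semigroup and maximal regularity as in Remark \ref{reg-sol-xi}, $w\in L^2((s_0,s_1),H^2(\T,\R))$, so no boundary terms arise on $\T$), the accretivity of $-\partial_x^2$ gives
\begin{equation*}
    \frac{d}{dt}\norm{w(t)}_{H^1}^2\le C\norm{w(t)}_{H^1}^2+C\norm{f(t)}_{L^2}^2,
\end{equation*}
where the zeroth-order term $\kappa(p+1)\Phi^p w$ and the source are handled by Young's inequality; in particular the factor $\norm{\partial_x^2 w}_{L^2}^2$ produced when pairing $f$ against $-\partial_x^2 w$ is soaked up by the dissipative term. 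Since $w(s_0)=0$, Gr\"onwall then produces
\begin{equation*}
    \sup_{t\in[s_0,s_1]}\norm{w(t)}_{H^1}^2\le C e^{C\sigma}\int_{s_0}^{s_1}\norm{f(t)}_{L^2}^2\,dt .
\end{equation*}

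It remains to bound $\norm{f}_{L^2((s_0,s_1),L^2)}$ quadratically. Here the one-dimensional embedding $H^1(\T)\hookrightarrow L^\infty(\T)$ is the workhorse: for $j\ge2$ one has $\norm{y^j}_{L^2}\le\norm{y}_{L^\infty}^{j-1}\norm{y}_{L^2}\le C\norm{y}_{H^1}^{j}$, while $\norm{\langle v^1,Q\rangle y}_{L^2}\le C_Q\,|v^1(t)|\,\norm{y}_{H^1}$ with $C_Q$ as in \eqref{c_Q}. Integrating in time and inserting the two a priori bounds above, every contribution carries at least two powers of $\norm{y_{s_0}}_{H^1}$: after squaring, the control term contributes a multiple of $C_Q^2 N(\sigma)^2\big(1+N(\sigma)\big)^2\norm{y_{s_0}}_{H^1}^4$, and each $y^j$ a multiple of $\sigma\,\big(1+N(\sigma)\big)^{2j}\norm{y_{s_0}}_{H^1}^{2j}$ with $j\ge2$. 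Factoring out $\norm{y_{s_0}}_{H^1}^2$ and collecting all constants (depending only on $\sigma$, on $N(\sigma)$, on $C_Q,\kappa,p,\Phi$, and on $\norm{y_{s_0}}_{H^1}$ through the surviving powers $j-2\ge0$) into the quantity $A_4(\sigma,\norm{y_{s_0}}_{H^1})$ of \eqref{app-A4} yields \eqref{app-estim-w}.

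The main obstacle is bookkeeping rather than analysis: one must check that, after extracting $\norm{y_{s_0}}_{H^1}^2$, all remaining powers of $\norm{y_{s_0}}_{H^1}$ (from $y^j$ with $j\ge3$) together with the full $N(\sigma)$-dependence can be absorbed into a single finite prefactor $A_4(\sigma,\norm{y_{s_0}}_{H^1})$ whose blow-up as $\sigma\to0^+$ remains compatible with the super-exponential bound \eqref{estim-K} exploited in the iteration of the main proof. The secondary technical point is justifying the $H^1$ (rather than merely $L^2$) energy identity, which rests on the maximal-regularity membership $w\in H^1((s_0,s_1),L^2)\cap L^2((s_0,s_1),H^2)$ coming from the analyticity of the semigroup; on the torus this removes every boundary contribution in the integrations by parts.
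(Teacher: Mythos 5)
Your proposal is correct and follows essentially the same route as the paper: test the equation against $w$ and against $-\partial_x^2 w$, use accretivity of $-\partial_x^2$, the embedding $H^1(\T)\hookrightarrow C(\T)$ to bound the powers $y^j$, the a priori bounds \eqref{estim-y-yo-v1} and \eqref{estim-v1} to convert everything into powers of $\norm{y_{s_0}}_{H^1}$, and Gr\"onwall with $w(s_0)=0$; the only cosmetic difference is that the paper runs the $L^2$ and $\dot H^1$ energy estimates separately before summing, while you combine them into one inequality, and the paper justifies the second integration by parts via $\partial_t w\in L^2((s_0,s_1),H^1)$ rather than via $w\in L^2((s_0,s_1),H^2)$.
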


\begin{proof}
    We recall that $w$ has been defined as $w:=y-\xi$ with $\xi\in C([s_0,s_1],H^1(\T,\R))\cap H^1(s_0,s_1,L^2(\T,\R))\cap L^2(s_0,s_1,H^2(\T,\R))$ solution of the linear system \eqref{lin-intro} with control $v^1\in H^1_0((s_0,s_1),\R^{q+2})$ such that
    \begin{equation*}
        \xi(s_1;s_0,\xi_0,v^1)=0,\quad\text{with}\quad  \xi(s_0)=y_{s_0}=\psi(s_0)-\Phi,
    \end{equation*}
    and $\norm{v^1}_{H^1(s_0,s_1)}\leq N(\sigma)\norm{y_{s_0}}_{H^1},$ with $\sigma=s_1-s_0. $ The existence of such control has been established in Proposition \ref{prop-null}. Let us estimate the norm of $w$ at time $s_1$. We first multiply the equation in \eqref{app-w} by $w$ and we obtain
\begin{multline*}
     \langle\partial_t w(t),w(t)\rangle_{L^2}-\langle\partial^2_x w(t),w(t)\rangle_{L^2}-\kappa(p+1)\Phi^p\langle w(t),w(t)\rangle_{L^2}+\kappa\displaystyle\sum_{j=2}^{p+1}\binom{p+1}{j}\Phi^{p+1-j} \langle y^j(t),w(t)\rangle_{L^2}\\
     =\big\langle\langle v^1(t),Q\rangle y(t),w(t)\big\rangle_{L^2}.
\end{multline*}
Using the accreativity of $-\partial_x^2$ and that $H^1(\T) \hookrightarrow C(\T)$, we get
\begin{multline*}
     \frac{1}{2}\frac{d}{dt} \norm{w(t)}^2_{L^2}\leq \kappa(p+1)\Phi^p\norm{w(t)}^2_{L^2}+\\+ \kappa\displaystyle\sum_{j=2}^{p+1}\binom{p+1}{j}\Phi^{p+1-j} \left(\frac{\norm{y(t)}^{2j}_{H^1}}{2}+\frac{\norm{w(t)}_{L^2}^2}{2}\right)+C_Q^2\sum_{j=1}^{q+2}|v^1_j(t)|^2\frac{\norm{y(t)}_{L^2}^2}{2}+\frac{\norm{w(t)}_{L^2}^2}{2},
\end{multline*}
where
\begin{equation}\label{c_Q}
    C_Q:=\sup_{i=1,\dots,{q+2}}\norm{Q^i}_{C^0}.
\end{equation}
By the Gr{\"o}nwall's Lemma, for any $t\in(s_0,s_1)$ it holds that
\begin{multline*}
    \norm{w(t)}_{L^2}^2\leq \left(\int_{s_0}^t \kappa\sum_{j=2}^{p+1}\binom{p+1}{j}\Phi^{p+1-j}\norm{y(s)}^{2j}_{H^1}+C_Q^2\sum_{j=1}^{q+2} |v^1_j(s)|^2\norm{y(s)}_{H^1}^2ds \right)\cdot \\e^{\int_{s_0}^t \left(2\kappa(p+1)\Phi^p+\sum_{j=2}^{p+1}\binom{p+1}{j}\Phi^{p+1-j}+1\right)ds}.
\end{multline*}
Therefore, by taking the supremum over $[s_0,s_1]$ we obtain
\begin{equation}\label{app-estim-norm-w}
    \begin{split}
        &\sup_{t\in[s_0,s_1]}\norm{w(t)}^2_{L^2} \leq \left(\kappa \sigma\sum_{j=2}^{p+1}\binom{p+1}{j}\Phi^{p+1-j}\sup_{t\in[s_0,s_1]}\norm{y(t)}^{2j}_{H^1}+C_Q^2\norm{v^1}^2_{L^2(s_0,s_1)}\sup_{t\in[\tau_0,\tau_1]}\norm{y(t)}_{H^1}^2 \right) e^{A_1(\sigma)}\\
        &\leq \left(2\kappa\sigma(p+1)\sum_{j=2}^{p+1}\binom{p+1}{j}\Phi^{p+1-j}\left(\norm{y_{s_0}}^{2j}_{H^1}+\norm{v^1}^{2j}_{L^2(s_0,s_1)}\right)+C_Q^2\norm{v^1}^2_{L^2(s_0,s_1)}\left(\norm{y_{s_0}}^2_{H^1}+\norm{v^1}^2_{L^2(s_0,s_1)} \right)\right) e^{A_1(\sigma)}\\
        &\leq \left(2\kappa\sigma(p+1)\sum_{j=2}^{p+1}\binom{p+1}{j}\Phi^{p+1-j}\left(1+N(\sigma)^{2j}\right)\norm{y_{s_0}}^{2j}_{H^1}+C_Q^2N(\sigma)^2\left(1+N(\sigma)^2\right)\norm{y_{s_0}}^{4}_{H^1} \right) e^{A_1(\sigma)}\\
        &\leq \left(2\kappa\sigma(p+1)\norm{y_{s_0}}^{4}_{H^1}\sum_{j=2}^{p+1}\binom{p+1}{j}\Phi^{p+1-j}\left(1+N(\sigma)^{2j}\right)\norm{y_{s_0}}^{2(j-2)}_{H^1}+C_Q^2N(\sigma)^2\left(1+N(\sigma)^2\right)\norm{y_{s_0}}^{4}_{H^1} \right) e^{A_1(\sigma)}\\
        &\leq A_2(\sigma,\norm{y_{s_0}}_{H^1})^2\norm{y_{s_0}}^{4}_{H^1},
    \end{split}
\end{equation}
where
\begin{equation*}
    A_1(\sigma):=\sigma\left(2\kappa(p+1)\Phi^p+\kappa\sum_{j=2}^{p+1}\binom{p+1}{j}\Phi^{p+1-j}+1\right),
\end{equation*}
\begin{multline*}
    A_2(\sigma,\norm{y_{s_0}}_{H^1}):=\\
    \left(2\kappa\sigma(p+1)\sum_{j=2}^{p+1}\binom{p+1}{j}\Phi^{p+1-j}\left(1+N(\sigma)^{2j}\right)\norm{y_{s_0}}^{2(j-2)}_{H^1}+C_Q^2N(\sigma)^2\left(1+N(\sigma)^2\right)\right)^{1/2}e^{A_1(\sigma)/2}.
\end{multline*}
Let us now multiply the equation in \eqref{w} by $-\partial^2_x w(t)$
\begin{multline*}
     -\langle\partial_t w(t),\partial^2_x w(t)\rangle_{L^2}+\langle\partial^2_x w(t),\partial^2_x w(t)\rangle_{L^2}+\kappa(p+1)\Phi^p\langle w(t),\partial^2_x w(t)\rangle_{L^2}\\
     -\kappa\displaystyle\sum_{j=2}^{p+1}\binom{p+1}{j}\Phi^{p+1-j} \langle y^j(t),\partial^2_x w(t)\rangle_{L^2}=-\big\langle\langle v^1(t),Q\rangle y(t),\partial^2_x w(t)\big\rangle_{L^2}.
\end{multline*}
We now perform integrations by parts, and we get
\begin{multline*}
     \langle\partial_t (\partial_xw(t)),\partial_x w(t)\rangle_{L^2}+\norm{\partial^2_x w(t)}^2_{L^2}-\kappa(p+1)\Phi^p\langle \partial_x w(t),\partial_x w(t)\rangle_{L^2}\\
     +\kappa\displaystyle\sum_{j=2}^{p+1}\binom{p+1}{j}\Phi^{p+1-j} j\langle y^{j-1}(t)\partial_x y(t),\partial_x w(t)\rangle_{L^2}
     =-\langle\langle v^1(t),Q\rangle y(t),\partial^2_x w(t)\rangle_{L^2},
\end{multline*}
and therefore
\begin{multline*}
     \frac{1}{2}\frac{d}{dt}\norm{\partial_x w(t)}^2_{L^2}+\norm{\partial^2_x w(t)}^2_{L^2}\leq
         \kappa(p+1)\Phi^p \norm{\partial_x w(t)}_{L^2}^2\\
         +\kappa(p+1)\displaystyle\sum_{j=2}^{p+1}\binom{p+1}{j}\Phi^{p+1-j} \left(\frac{\norm{y(t)}^{2j}_{H^1}}{2}+\frac{\norm{\partial_x w(t)}_{L^2}^2}{2}\right)   +C_Q^2\sum_{j=1}^{q+2}|v^{1,j}(t)|^2\norm{y(t)}^2_{H^1}+\frac{\norm{\partial^2_x w(t)}_{L^2}^2}{2},
\end{multline*}
where we have used that $\partial_t w=\partial_t y-\partial_t\xi \in L^2((s_0,s_1),H^1(\T,\R))$ (see Proposition \ref{prop-reg-psi-t} for $y$ and Remark \ref{reg-sol-xi} for $\xi$). We apply the Gr{\"o}nwall's Lemma, and we obtain that, for any $t\in[s_0,s_1]$,
\begin{equation*}
     \norm{\partial_x w(t)}_{L^2}^2\leq\left(\int_{s_0}^t \kappa(p+1)\sum_{j=2}^{p+1}\binom{p+1}{j}\Phi^{p+1-j} \norm{y(s)}^{2j}_{H^1} +C^2_Q\sum_{j=1}^{q+2}|v^{1,j}(s)|^2\norm{y(s)}^2_{H^1}ds\right)e^{A_3(s_0,t)}
\end{equation*}
with
\begin{equation*}
    A_3(s_0,t):=\int_{s_0}^t\Big(2\kappa(p+1)\Phi^p+\kappa(p+1)\sum_{j=2}^{p+1}\binom{p+1}{j}\Phi^{p+1-j}\Big)ds.
\end{equation*}
Taking the supremum over the interval $[s_0,s_1]$, we obtain
\begin{equation*}
    \begin{split}
         &\sup_{t\in[s_0,s_1]}\norm{\partial_x w(t)}^2_{L^2}\leq\left(\int_{s_0}^{s_1} \kappa(p+1)\sum_{j=2}^{p+1}\binom{p+1}{j}\Phi^{p+1-j} \norm{y(s)}^{2j}_{H^1} +C^2_Q\sum_{j=1}^{q+2}|v^{1,j}(s)|^2\norm{y(s)}^2_{H^1}ds\right)e^{A_3(s_0,s_1)}\\
         &\quad\leq \left(\kappa\sigma(p+1) \sum_{j=2}^{p+1}\binom{p+1}{j}\Phi^{p+1-j} \sup_{t\in[s_0,s_1]}\norm{y(t)}^{2j}_{H^1} +C^2_Q\norm{v^1}^2_{L^2(s_0,s_1)}\sup_{t\in[s_0,s_1]}\norm{y(t)}^2_{H^1}\right)e^{A_3(s_0,s_1)}\\
         &\quad\leq \left(2\kappa\sigma(p+1)^2 \sum_{j=2}^{p+1}\binom{p+1}{j}\Phi^{p+1-j} \left(\norm{y_{s_0}}^{2j}_{H^1}+\norm{v^1}^{2j}_{L^2(s_0,s_1)}\right) +C^2_Q\norm{v^1}^2_{L^2(s_0,s_1)}\left(\norm{y_{s_0}}^2_{H^1}+\norm{v^1}^2_{L^2(s_0,s_1)}\right)\right)e^{A_3(s_0,s_1)}.
     \end{split}
\end{equation*}
Thanks to the assumption on $y_0$ we deduce
\begin{equation}\label{app-estim-norm-dw}
    \begin{split}
         \sup_{t\in[s_0,s_1]}&\norm{\partial_x w(t)}^2_{L^2}\\
         &\leq\left(2\kappa\sigma(p+1)^2 \sum_{j=2}^{p+1}\binom{p+1}{j}\Phi^{p+1-j} \left(1+N^{2j}(\sigma)\right)\norm{y_{s_0}}^{2j}_{H^1} +C_Q^2N^2(\sigma)\left(1+N^2(\sigma)\right)\norm{y_{s_0}}^4_{H^1}\right)e^{A_3(s_0,s_1)}\\
         &=\left(2\kappa\sigma(p+1)^2 \sum_{j=2}^{p+1}\binom{p+1}{j}\Phi^{p+1-j} \left(1+N^{2j}(\sigma)\right)\norm{y_{s_0}}^{2(j-2)}_{H^1} +C^2_Q N^2(\sigma)\left(1+N^2(\sigma)\right)\right)e^{A_3(s_0,s_1)}\norm{y_{s_0}}^4_{H^1}\\
         &=A_3(\sigma,\norm{y_{s_0}}_{H^1})^2\norm{y_{s_0}}^4_{H^1},
     \end{split}
\end{equation}
where
\begin{multline*}
    A_3(\sigma,\norm{y_{s_0}}_{H^1}):=\\
    \left(2\kappa\sigma(p+1)^2 \sum_{j=2}^{p+1}\binom{p+1}{j}\Phi^{p+1-j} \left(1+N^{2j}(\sigma)\right)\norm{y_{s_0}}^{2(j-2)}_{H^1} +C^2_Q N^2(\sigma)\left(1+N^2(\sigma)\right)\right)^{1/2}e^{A_3(s_0,s_1)/2}.
\end{multline*}
Finally, from \eqref{app-estim-norm-w} and \eqref{app-estim-norm-dw}, we conclude that
\begin{equation*}
    \begin{split}
        \sup_{t\in[s_0,s_1]}\norm{w(t)}^2_{H^1}&\leq 2\left(\sup_{t\in[s_0,s_1]}\norm{w(t)}^2_{L^2}+\sup_{t\in[s_0,s_1]}\norm{\partial_x w(t)}^2_{L^2}\right)\\
        &\leq 2\left(A_2(\sigma,\norm{y_{s_0}}_{H^1})^2+A_3(\sigma,\norm{y_{s_0}}_{H^1})^2\right)\norm{y_{s_0}}^4_{H^1}\leq A_4^2(\sigma,\norm{y_{s_0}}_{H^1})^2 \norm{y_{s_0}}^4_{H^1},
    \end{split}
\end{equation*}
with
\begin{multline}\label{app-A4}
    A_4(\sigma,\norm{y_{s_0}}_{H^1}):=    \sqrt{2}\left(2\kappa\sigma(p+1)^2 \sum_{j=2}^{p+1}\binom{p+1}{j}\Phi^{p+1-j} \left(1+N(\sigma)^{2j}\right)\norm{y_{s_0}}^{2(j-2)}_{H^1} +C^2_Q N^2(\sigma)\left(1+N(\sigma)^2\right)\right)^{1/2}\cdot\\
    e^{\sigma(2\kappa(p+1)\Phi^p+\kappa(p+1)\sum_{j=2}^{p+1}\binom{p+1}{j}\Phi^{p+1-j}+1))/2}.
\end{multline}
The proof is therefore concluded.
\end{proof}

\end{appendices}

\end{document}